\newcounter{minutes}\setcounter{minutes}{\time}
\newcounter{hours}\setcounter{hours}{\time}
\title[Inequalities and bilipschitz conditions]{Inequalities and bilipschitz conditions for triangular ratio metric}
\author{Parisa Hariri}
\address{Department of Mathematics and Statistics,
  University of Turku, Turku, Finland}
\curraddr{}
\email{parisa.hariri@utu.fi}
\author{Matti Vuorinen}
\address{Department of Mathematics and Statistics,
  University of Turku, Turku, Finland}
\email{vuorinen@utu.fi}
\author{Xiaohui Zhang}
\address{Department of Physics and Mathematics,\
  University of Eastern Finland, Joensuu, Finland}
\email{xiaohui.zhang@uef.fi}
\keywords{triangular ratio metric, visual angle metric, distance ratio metric, bilipschitz condition, quasiconformal mappings}
\subjclass[2010]{51M10, 30C65}
\date{}
\dedicatory{}
\theoremstyle{plain}
\newtheorem{thm}[equation]{Theorem}
\newtheorem{cor}[equation]{Corollary}
\newtheorem{lem}[equation]{Lemma}
\newtheorem{prop}[equation]{Proposition}
\theoremstyle{definition}
\newtheorem{defn}[equation]{Definition}
\theoremstyle{remark}
\newtheorem{rem}[equation]{Remark}
\newtheorem{nonsec}[equation]{}
\numberwithin{equation}{section}
\newcommand{\beq}{\begin{equation}}
\newcommand{\eeq}{\end{equation}}
\newcommand{\bequu}{\begin{eqnarray*}}
\newcommand{\eequu}{\end{eqnarray*}}
\newcommand{\bequ}{\begin{eqnarray}}
\newcommand{\eequ}{\end{eqnarray}}
\newcommand{\B}{\mathbb{B}^2}
\newcommand{\R}{\mathbb{R}^2}
\newcommand{\Bn}{ {\mathbb{B}^n} }
\newcommand{\Rn}{ {\mathbb{R}^n} }
\newcommand{\sh}{\,\textnormal{sh}}
\newcommand{\ch}{\,\textnormal{ch}}
\renewcommand{\th}{\,\textnormal{th}}
\begin{document}
\def\thefootnote{}
\footnotetext{ \texttt{File:~\jobname .tex,
           printed: \number\year-\number\month-\number\day,
           \thehours.\ifnum\theminutes<10{0}\fi\theminutes}
} \makeatletter\def\thefootnote{\@arabic\c@footnote}\makeatother

\begin{abstract}
Let $G \subsetneq   \mathbb{R}^n$ be a domain and let $d_1$ and $d_2$ be two metrics on $G$. We compare the geometries defined by the two metrics to each other for several pairs of metrics. The metrics we study include the distance ratio metric, the triangular ratio metric and the visual angle metric. Finally we apply our results to study Lipschitz maps with respect to these metrics.
\end{abstract}

\maketitle

\section{Introduction}
\setcounter{equation}{0}
Several metrics have an important role in geometric function theory and in the study of quasiconformal maps in the plane and space \cite{g}, \cite{v1}, \cite{gp} and \cite{go}. One of the key topics studied is uniform continuity of quasiconformal mappings with respect to metrics. Many authors have proved that these maps are either Lipschitz or H\"older continuous with respect to hyperbolic type metrics \cite{go,v2,vu}.
J. Ferrand studied in \cite{f1} the reverse question: does Lipschitz continuity imply quasiconformality? A negative answer was given in \cite{fmv} in the case of a conformally invariant metric introduced by Ferrand \cite{f1}. Our goal here is to continue this research and to study similar questions for some other metrics, which, in our terminology, are of "hyperbolic type". While this term does not have a precise meaning, it refers to the fact
 that these metrics share some properties of the hyperbolic metric: the boundary has a strong influence on the value of the distance between points. In particular, we are interested in the visual angle metric introduced and studied recently in \cite{klvw} and the triangular ratio metric from \cite{klvw,chkv}. The triangular
ratio metric  is defined as follows for a domain
$G \subsetneq   \mathbb{R}^n$ and $x,y \in G\,$:
\begin{equation}\label{sm}
s_G(x,y)=\sup_{z\in \partial G}\frac{|x-y|}{|x-z|+|z-y|}\in [0,1]\,.
\end{equation}
The visual angle metric is defined by
\begin{equation}\label{v}
v_G(x,y)=\sup\{\measuredangle (x,z,y): z\in\partial{G} \},\quad x, y\in G\,,
\end{equation}
for domains $G\subsetneq \mathbb{R}^n,\, n\geq 2$, such that
$\partial{G}$ is not a proper subset of a line, see \cite[Lemma 2.8]{klvw}. Here
the notation $\measuredangle(x, z, y)$ means the angle in
the range $[0, \pi]$ between the segments $[x, z]$ and $[y, z]\,.$

This paper is divided into sections as follows.
In Section \ref{section 2} we give some preliminary results and prove various inequalities between the above metrics which will be applied later on. It is easy to see that there exist domains $G$ with isolated boundary points such that the metrics $s_G$ and $v_G$ are not comparable (see also \cite[Remark 2.18]{hvw}). Here we introduce in Section \ref{section 3} two conditions on domains $G$ under which $s_G$ and $v_G$ are comparable.
 The first condition applies to domains $G$ which satisfy that $\partial{G}$ is "locally uniformly nonlinear", see Theorem \ref{6.1},  whereas the second condition, similar to the so-called porosity condition, applies to domains satisfying "exterior ball condition", see Theorem \ref{myHdthm}. In Section \ref{section 4} we show, motivated in part by V\"ais\"al\"a's work \cite{v2}, that bilipschitz maps with respect to  the triangular ratio metric, distance ratio metric, and quasihyperbolic metric are quasiconformal. Finally, applying the results of Section \ref{section 2}, we prove that quasiregular mappings $f: \mathbb{B}^n \to \mathbb{B}^n $
 are H\"older continuous with respect to the metric $s_{\mathbb{B}^n}\,.$
\section{Preliminary results }\label{section 2}

We introduce some terminology and notation, following \cite{v1}. 
For  $x\in \Rn$  and  $r>0$  let
\begin{align*}
               B^n(x,r) & =\{\,z\in \Rn: |x-z|<r\,\},         \\
           S^{n-1}(x,r) & =\{\,z\in \Rn: |x-z|=r\,\}
\end{align*}
denote the ball and sphere, respectively, centered at  $x$  with radius
$r$. The abbreviations  $B^n(r)=B^n(0,r)$, $S^{n-1}(r)=S^{n-1}(0,r)$,
$B^n=B^n(1)$, $S^{n-1}=S^{n-1}(1)$  will be used frequently. The dimensions are sometimes omitted: $B(x,r)$, $S(x,r)$.
\begin{nonsec}{\bf Hyperbolic metric.}
The hyperbolic metrics $\rho_{\mathbb{H}^n}$ and $\rho_{\mathbb{B}^n}$ of the upper
half plane ${\mathbb{H}^n} = \{ (x_1,\ldots,x_n)\in {\mathbb{R}^n}:  x_n>0 \} $
and of the unit ball ${\mathbb{B}^n}= \{ z\in {\mathbb{R}^n}: |z|<1 \} $ can be defined as weighted metrics with the weight functions
 $w_{\mathbb{H}^n}(x)=1/{x_n}$ and
 $w_{\mathbb{B}^n}(x)=2/(1-|x|^2)\,,$ respectively.
Explicitly, by \cite[p.35]{b} we have for  $ x,y\in \mathbb{H}^n$
\begin{equation}\label{cro}
\ch{\rho_{\mathbb{H}^n}(x,y)}=1+\frac{|x-y|^2}{2x_ny_n},
\end{equation}
and by \cite[p.40]{b} for $x,y\in\mathbb{B}^n$
\begin{equation}\label{sro}
\sh{\frac{\rho_{\mathbb{B}^n}(x,y)}{2}}=\frac{|x-y|}{\sqrt{1-|x|^2}{\sqrt{1-|y|^2}}}\,.
\end{equation}
From \eqref{sro} we easily obtain
\begin{equation*}
\th{\frac{\rho_{\mathbb{B}^n}(x,y)}{2}}=\frac{|x-y|}{\sqrt{|x-y|^2+(1-|x|^2)(1-|y|^2)}}\,. 
\end{equation*}
\end{nonsec}
For both $\mathbb{B}^n$ and $\mathbb{H}^n$ one can define the hyperbolic metric using absolute ratios (cf. e.g. \cite[(2.21)]{vu}). Because of the M\"obius invariance of the absolute ratio we may thus define for every M\"obius transformation $h$ the hyperbolic metric in $h(\mathbb{B}^n)\,.$ This metric will be denoted by $\rho_{h(\mathbb{B}^n)}\,.$

\begin{nonsec}{\bf Distance ratio metric.}
For a proper open subset $G \subset {\mathbb R}^n\,$ and for all
$x,y\in G$, the  distance ratio
metric $j_G$ is defined as
$$
 j_G(x,y)=\log \left( 1+\frac{|x-y|}{\min \{d(x,\partial G),d(y, \partial G) \} } \right)\,.
$$
This metric was introduced by F.W. Gehring and B.G. Osgood
\cite{go} in a slightly different form and in the above form in \cite{vu0}. If confusion seems unlikely, then we also write $d(x)= d(x,\partial G)\,.$
In addition to $j_G\,,$ we also study the metric
$$
 j^*_G(x,y)= \th \frac{j_G(x,y)}{2} \,.
$$
Because $j_G$ is a metric,
it follows easily, see \cite[7.42(1)]{avv}, that $j^*_G$ is a metric, too.
Moreover, by \cite[Lemma 2.41(2)]{vu} and \cite[Lemma 7.56]{avv} if $G\in\{\mathbb{B}^n,\mathbb{H}^n\}\,,$ then
\beq\label{jrho}
j_G(x,y)\leq \rho_G(x,y)\leq 2 j_G(x,y)
\eeq
for all $x,y\in G\,.$
\end{nonsec}

\begin{nonsec}{\bf Quasihyperbolic metric.}
Let $G$ be a proper subdomain of ${\mathbb R}^n\,.$ For all $x,\,y\in G$, the quasihyperbolic metric $k_G$ is defined as
$$k_G(x,y)=\inf_{\gamma}\int_{\gamma}\frac{1}{d(z,\partial G)}|dz|\,,$$
where the infimum is taken over all rectifiable arcs $\gamma$ joining $x$ to $y$ in $G$ \cite{gp}.
From \cite[Lemma 2.1]{gp} it follows that
\beq\label{jk}
j_G(x,y)\leq k_G(x,y)
\eeq
for all $x,y\in G\,.$ It is easy to see that $k_{\mathbb{H}^n}\equiv \rho_{\mathbb{H}^n}$ and that for all $x,y\in\mathbb{B}^n$
\beq\label{rok}
\rho_{\mathbb{B}^n}(x,y)\leq 2 k_{\mathbb{B}^n}(x,y)\leq 2 \rho_{\mathbb{B}^n}(x,y)\,.
\eeq
\end{nonsec}

\begin{nonsec}{\bf Point pair function.}
We define for $x,y\in G\subsetneq\mathbb{R}^n$ the point pair function
$$p_G(x,y)=\frac{|x-y|}{\sqrt{|x-y|^2+4\,d(x)\,d(y)}}\,.$$
This point pair function was introduced in \cite{chkv} where it turned out to be a very
useful function in the study of the triangular ratio metric.
However, there are domains $G$ such that $p_G$ is not a metric.
\end{nonsec}

\begin{lem}\label{1.1}
Let $G$ be a proper subdomain of $\mathbb{R}^n\,.$ If $x, y\in G$, then
$$  j^*_G(x,y) =  \frac{|x-y|}{|x-y|+2 \min \{d(x),d(y) \}} $$
and
$$j^*_G(x,y)\leq s_G(x,y)\leq \frac{e^{j_G(x,y)}-1}{2}\,.$$
The first inequality is sharp for $G=\Rn\setminus\{0\}\,.$
\end{lem}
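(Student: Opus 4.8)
The plan is to treat the three assertions in turn, beginning with the closed-form expression for $j^*_G$, which is purely algebraic. Writing $m=\min\{d(x),d(y)\}$ and $t=|x-y|$, the definition of $j_G$ gives $e^{j_G(x,y)}=1+t/m=(m+t)/m$. Substituting this into the half-angle identity $\th(u/2)=(e^u-1)/(e^u+1)$ yields
\[
 j^*_G(x,y)=\th\frac{j_G(x,y)}{2}=\frac{e^{j_G(x,y)}-1}{e^{j_G(x,y)}+1}=\frac{t/m}{(2m+t)/m}=\frac{t}{2m+t},
\]
which is the asserted formula $|x-y|/(|x-y|+2\min\{d(x),d(y)\})$. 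The same computation records that $\tfrac12\bigl(e^{j_G(x,y)}-1\bigr)=t/(2m)=|x-y|/(2m)$, a form I will use for the upper bound.

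For the inequalities I would work directly from the definition $s_G(x,y)=\sup_{z\in\partial G}|x-y|/(|x-z|+|z-y|)$. The upper bound is immediate: every $z\in\partial G$ satisfies $|x-z|\ge d(x)$ and $|z-y|\ge d(y)$, so the denominator is at least $d(x)+d(y)\ge 2m$; dividing $|x-y|$ by this and taking the supremum over $z$ gives $s_G(x,y)\le|x-y|/(2m)=\tfrac12(e^{j_G(x,y)}-1)$. For the lower bound, by the symmetry of both $s_G$ and $j^*_G$ in the pair $(x,y)$ I may assume $d(x)=m$, and then choose a nearest boundary point $z_0\in\partial G$ with $|x-z_0|=d(x)$. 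The triangle inequality gives $|z_0-y|\le|z_0-x|+|x-y|=d(x)+|x-y|$, so $|x-z_0|+|z_0-y|\le 2d(x)+|x-y|=2m+t$. Hence the single test point $z_0$ already yields $s_G(x,y)\ge t/(2m+t)=j^*_G(x,y)$, proving $j^*_G(x,y)\le s_G(x,y)$.

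Finally, to see that the inequality $j^*_G\le s_G$ is sharp I would specialise to $G=\Rn\setminus\{0\}$, where $\partial G=\{0\}$ forces $s_G(x,y)=|x-y|/(|x|+|y|)$ and $d(x)=|x|$. Choosing $x$ and $y$ on a common ray emanating from the origin with $|x|<|y|$ makes $|x-y|=|y|-|x|$, whence $|x-y|+2\min\{|x|,|y|\}=|x|+|y|$ and therefore $j^*_G(x,y)=s_G(x,y)$; this exhibits equality and shows the constant cannot be improved. None of the steps presents a genuine obstacle; the only point needing care is the existence of the nearest boundary point $z_0$ in the lower bound, which is guaranteed because $\partial G$ is a nonempty closed set, so the infimum defining $d(x)$ is attained and the supremum defining $s_G$ is bounded below by the value at this concrete $z_0$.
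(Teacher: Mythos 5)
Your proposal is correct and follows essentially the same route as the paper's proof: the algebraic identity $\th(u/2)=(e^u-1)/(e^u+1)$ for the equality, the nearest boundary point $z_0$ as a test point for the lower bound, the estimate $|x-z|+|z-y|\ge d(x)+d(y)\ge 2\min\{d(x),d(y)\}$ for the upper bound, and the same punctured-space example $G=\Rn\setminus\{0\}$ with collinear points for sharpness. The only cosmetic differences are that you work with $m=\min\{d(x),d(y)\}$ throughout instead of normalizing $d(x)\le d(y)$, and you skip the paper's intermediate AM--GM step $d(x)+d(y)\ge 2\sqrt{d(x)d(y)}$.
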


\begin{proof}
By symmetry we may assume that $d(x)\leq d(y)\,.$
For $x,y\in G$, let $z\in\partial{G}$ be a point satisfying $d(x)=|x-z|\,.$
 For the equality claim we see that
\allowdisplaybreaks\begin{align*}
\frac{|x-y|}{|x-y|+2d(x)} & =  \frac{|x-y|/d(x)}{|x-y|/d(x)+2}=\frac{e^{j_G(x,y)}-1}{e^{j_G(x,y)}+1}\\
& = \frac{e^{j_G(x,y)/2}-e^{-j_G(x,y)/2}}{e^{j_G(x,y)/2}+e^{-j_G(x,y)/2}}=j^*_G(x,y)\,.
\end{align*}
For the first inequality we observe that by the triangle inequality
$$
s_G(x,y)\geq \frac{|x-y|}{|x-z|+|z-y|}\geq \frac{|x-y|}{|x-y|+2d(x)}=j^*_G(x,y)\,.
$$
The sharpness of the first inequality when $G=\Rn\setminus\{0\}$, follows if we choose $x=1$, $y=t>1\,.$ Then $s_G(x,y)=\frac{t-1}{t+1}=j^*_G(x,y)\,.$
For the second inequality, note that
\allowdisplaybreaks\begin{align*}
s_G(x,y) & \leq  \frac{|x-y|}{d(x)+d(y)} \leq  \frac{|x-y|}{2 \sqrt{d(x)d(y)}}\\
& \leq  \frac{|x-y|}{2 d(x)}= \frac{e^{j_G(x,y)}-1}{2}\,.\qedhere
\end{align*}

\end{proof}

\begin{lem}\label{1.2}
Let $G$ be a proper subdomain of $\mathbb{R}^n\,.$ Then for all $x,y \in G$ we have
\[
s_G(x,y)\leq 2 j^*_G(x,y)\,.
\]
This inequality is sharp when the domain is $G=\Rn\setminus\{0\}\,.$
\end{lem}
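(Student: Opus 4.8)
The plan is to reduce the supremum defining $s_G$ to a single pointwise inequality in $z$, and then to bound the resulting expression by splitting it into two pieces. First I would recall from Lemma~\ref{1.1} the closed form
\[
j^*_G(x,y)=\frac{|x-y|}{|x-y|+2\min\{d(x),d(y)\}},
\]
and, by symmetry, assume $d(x)\le d(y)$, so that the minimum equals $d(x)$. The case $x=y$ is trivial, so assume $x\ne y$. It then suffices to show that for every $z\in\partial G$,
\[
\frac{|x-y|}{|x-z|+|z-y|}\le\frac{2|x-y|}{|x-y|+2d(x)},
\]
since the right-hand side is independent of $z$, so taking the supremum over $z\in\partial G$ on the left yields $s_G(x,y)\le 2j^*_G(x,y)$.

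After cancelling $|x-y|$ and clearing denominators, the pointwise inequality is equivalent to
\[
|x-y|+2d(x)\le 2\bigl(|x-z|+|z-y|\bigr).
\]
I would prove this by writing the right-hand side as two copies of $|x-z|+|z-y|$ and matching each copy to one summand on the left. The triangle inequality gives $|x-y|\le|x-z|+|z-y|$, which absorbs the first summand. For the second, the definition of the distance to the boundary gives $|x-z|\ge d(x)$ for every $z\in\partial G$, and $|z-y|\ge d(y)\ge d(x)$, whence $|x-z|+|z-y|\ge 2d(x)$. Adding the two bounds produces exactly the desired inequality.

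The only step needing a little care is the second bound, which combines $|z-y|\ge d(y)$ with the normalisation $d(y)\ge d(x)$ so that both $|x-z|$ and $|z-y|$ are compared against the common quantity $d(x)$. I do not expect a genuine obstacle, as the whole estimate follows from the triangle inequality and the definition of $d$; the content is simply in separating $|x-y|+2d(x)$ into two parts that the two terms $|x-z|$ and $|z-y|$ can each dominate.

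For the sharpness claim with $G=\mathbb{R}^n\setminus\{0\}$, whose boundary is the single point $0$, I would take any $x\ne 0$ and set $y=-x$. Then $d(x)=d(y)=|x|$ and $|x-y|=2|x|$, so the unique boundary point $z=0$ gives
\[
s_G(x,y)=\frac{2|x|}{|x|+|x|}=1,\qquad j^*_G(x,y)=\frac{2|x|}{2|x|+2|x|}=\frac12.
\]
Hence $s_G(x,y)=2\,j^*_G(x,y)$, showing that the constant $2$ cannot be improved.
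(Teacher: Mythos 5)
Your proof is correct, but it takes a genuinely different route from the paper's. The paper deduces the bound formally from Lemma~\ref{1.1} together with the trivial bound $s_G\le 1$: it splits into the case $e^{j_G(x,y)}\ge 3$, where $2j^*_G(x,y)=2(e^{j_G(x,y)}-1)/(e^{j_G(x,y)}+1)\ge 1\ge s_G(x,y)$, and the case $e^{j_G(x,y)}<3$, where $2j^*_G(x,y)\ge (e^{j_G(x,y)}-1)/2\ge s_G(x,y)$ by the upper bound of Lemma~\ref{1.1}. You instead prove the pointwise inequality $|x-y|+2d(x)\le 2(|x-z|+|z-y|)$ for every $z\in\partial G$ (after normalizing $d(x)\le d(y)$), obtained by adding the triangle inequality $|x-y|\le|x-z|+|z-y|$ to the boundary-distance bounds $|x-z|\ge d(x)$ and $|z-y|\ge d(y)\ge d(x)$, and then take the supremum over $z$. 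This is more elementary: it needs no case analysis, never passes through the exponential form of $j_G$, and uses only the equality part of Lemma~\ref{1.1}, not its second inequality. It also yields a slightly stronger conclusion, namely that the ratio $|x-y|/(|x-z|+|z-y|)$ is at most $2j^*_G(x,y)$ for \emph{each} boundary point $z$, not merely after taking the supremum. What the paper's argument buys in exchange is brevity given its existing machinery: the estimate is a purely formal consequence of $j^*_G\le s_G\le(e^{j_G}-1)/2$ and $s_G\le 1$, a pattern of deduction the paper reuses elsewhere. Your sharpness example, taking $y=-x$ in $G=\Rn\setminus\{0\}$ so that $s_G(x,y)=1$ and $j^*_G(x,y)=\tfrac12$, is identical to the paper's.
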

\begin{proof}
We first consider the points $x,y\in G$ satisfying $e^{j_G(x,y)}\geq3\,.$ The definition of $j_G$
readily yields
$$2j^*_G(x,y)=\frac{2(e^{j_G(x,y)}-1)}{e^{j_G(x,y)}+1}\geq1\geq s_G(x,y)\,.$$

We next suppose that $e^{j_G(x,y)}<3\,.$ In this case, it is clear that
$$2j^*_G(x,y)\geq \frac{e^{j_G(x,y)}-1}{2}\,,$$
which together with Lemma \ref{1.1} implies the desired inequality.

The sharpness of the inequality can be easily verified by investigating the domain $G=\mathbb{R}^n \setminus \{ 0 \}\,.$ For any $x \in G$ selecting $y=-x$ gives $s_G(x,y) = 1$ and $j^*_G(x,y) = \frac12\,.$
\end{proof}

\begin{lem}\label{1.4} If $G$ is a proper subdomain of $\Rn$, then for all $x,y\in G$,
\[
j^*_G(x,y)\leq p_G(x,y) \leq \frac{w}{\sqrt{w^2+1}}\leq {\sqrt 2}j^*_G(x,y)\, ,
\]
with $ w= (e^{j_G(x,y)}-1)/2\,.$
Both bounds are sharp when the domain is $G=\Rn\setminus\{0\}\,.$
\end{lem}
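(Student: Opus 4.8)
The plan is to reduce all three inequalities to elementary algebraic estimates by fixing convenient notation. By symmetry assume $d(x)\le d(y)$ and abbreviate $a=|x-y|$, $m=d(x)=\min\{d(x),d(y)\}$ and $M=d(y)$. From Lemma \ref{1.1} we have $j^*_G(x,y)=a/(a+2m)$, while the definition of $j_G$ gives $e^{j_G(x,y)}-1=a/m$, so that $w=a/(2m)$. A short computation then shows
\[
\frac{w}{\sqrt{w^2+1}}=\frac{a}{\sqrt{a^2+4m^2}},
\]
and of course $p_G(x,y)=a/\sqrt{a^2+4mM}$. Thus the entire chain becomes a comparison of three explicit expressions in $a,m,M$, and if $a=0$ then $x=y$ and everything vanishes, so I may assume $a>0$.

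I would then dispatch the three inequalities one at a time, in each case clearing the (positive) denominators and squaring. The middle inequality $p_G\le w/\sqrt{w^2+1}$ reduces to $4mM\ge 4m^2$, that is to $m\le M$, which holds by the choice of $m$. The rightmost inequality $w/\sqrt{w^2+1}\le\sqrt2\,j^*_G$ reduces, after squaring, to $(a+2m)^2\le 2(a^2+4m^2)$, i.e.\ to $(a-2m)^2\ge0$, which is automatic.

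The one step that uses geometry rather than pure algebra is the leftmost inequality $j^*_G\le p_G$. After clearing denominators and squaring it becomes $(a+2m)^2\ge a^2+4mM$, i.e.\ $m+a\ge M$. This is precisely the Lipschitz property of the distance function, $|d(x)-d(y)|\le|x-y|$, so I regard this as the only substantive point of the argument; the remaining two inequalities are mere monotonicity and a perfect square.

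Finally, for sharpness take $G=\Rn\setminus\{0\}$, where $d(z)=|z|$. Choosing collinear points $0,x,y$ with $x$ between $0$ and $y$ forces $M=m+a$, so the estimate $m+a\ge M$ is an equality and hence $p_G=j^*_G$, showing the lower bound is sharp. Choosing instead $y=-x$ gives $m=M=|x|$ and $a=2|x|$, so that $(a-2m)^2=0$ as well; one then checks $p_G=w/\sqrt{w^2+1}=\sqrt2\,j^*_G=1/\sqrt2$, which shows the constant $\sqrt2$ in the upper bound cannot be improved.
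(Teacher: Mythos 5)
Your proof is correct and takes essentially the same route as the paper's: both reduce the first inequality to the Lipschitz bound $d(y)\le d(x)+|x-y|$, the middle one to $d(x)\le d(y)$, and the last one to a perfect square (your $(a-2m)^2\ge 0$ is the paper's $(w-1)^2\ge 0$ in different notation), and the sharpness examples coincide (collinear points with the origin for the lower bound, $y=-x$ for the upper). No gaps.
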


\begin{proof}
Without loss of generality we may suppose that $d(x)\leq d(y)\,.$ Then by Lemma \ref{1.1} the first inequality is equivalent to
\[
\frac{|x-y|}{|x-y|+2d(x)}\leq \frac{|x-y|}{\sqrt{|x-y|^2+4 d(x)d(y)}}\,.
\]
This, in turn, follows easily from the inequality $d(y)\leq |x-y|+d(x)\,.$

For the second inequality observe that with $w=(e^{j_G(x,y)}-1)/2$
\[
p_G(x,y) = \frac{|x-y|}{2 d(x) \sqrt{(|x-y|/(2 d(x)))^2+d(y)/d(x)}}=\frac{w}{\sqrt{w^2 + d(y)/d(x)}}
\]
\[
\le \frac{w}{\sqrt{w^2 + 1}} \le \frac{1+w}{\sqrt{w^2 + 1}} j^*_G(x,y)\le \sqrt{2}  j^*_G(x,y) \,.
\]
To see the sharpness of the first inequality in $G=\Rn\setminus\{0\}$ if we choose $y=\frac1x$, $x>1$, then
\[
j^*_G(x,\frac1x)=\frac{x^2-1}{x^2-1+2}=p_G(x,\frac1x)\,.
\]
For the sharpness of the last inequality again in $G=\Rn\setminus\{0\}$, we choose $y=-x\,.$ Then
\[
p_G(x,-x)=\frac{1}{\sqrt{2}}, \quad j^*_G(x,-x)=\frac12 \, .\qedhere
\]
\end{proof}

\begin{prop} \label{prop29}
If $G$ is a bounded domain of $\mathbb{R}^n$, then for all $x,y\in G$
\[
j^*_G(x,y)\geq \frac{|x-y|}{d(G)}\,.
\]
\end{prop}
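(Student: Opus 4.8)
The plan is to reduce the claim to a purely geometric diameter estimate using the closed-form expression for $j^*_G$ from Lemma \ref{1.1}. First I would dispose of the trivial case $x=y$, where both sides vanish, and then assume $x\neq y$ and, by symmetry, $d(x)\le d(y)$, so that $\min\{d(x),d(y)\}=d(x)$. By Lemma \ref{1.1} we have $j^*_G(x,y)=|x-y|/(|x-y|+2d(x))$, and since $|x-y|>0$ the target inequality $j^*_G(x,y)\ge |x-y|/d(G)$ is equivalent to
\[
|x-y|+2d(x)\le d(G).
\]
Thus everything comes down to exhibiting two points of $\overline{G}$ whose mutual distance is at least $|x-y|+2d(x)$.

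The key step is the geometric construction. Since $G$ is open, the balls $B(x,d(x))$ and $B(y,d(y))$ are contained in $G$, so their closures lie in $\overline{G}$. I would take the line through $x$ and $y$ and pick $u\in S(x,d(x))$ on the side of $x$ opposite to $y$, and $v\in S(y,d(y))$ on the side of $y$ opposite to $x$. Then $u,x,y,v$ are collinear in this order, so the distances add:
\[
|u-v|=|u-x|+|x-y|+|y-v|=d(x)+|x-y|+d(y).
\]
Using $d(y)\ge d(x)$ yields $|u-v|\ge |x-y|+2d(x)$. Since $u,v\in\overline{G}$ and the diameter of $G$ coincides with that of $\overline{G}$, we conclude $|x-y|+2d(x)\le |u-v|\le d(G)$, which is exactly the required estimate.

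I do not expect a genuine obstacle here; the only points requiring care are the verification that the four points are collinear and ordered as claimed (so that the three segment lengths genuinely sum), and the observation that the endpoints $u,v$, which lie on the bounding spheres and may touch $\partial G$, still belong to the closure $\overline{G}$ and are therefore controlled by the diameter. An alternative to invoking the diameter of $\overline{G}$ is to replace $u,v$ by nearby interior points $u',v'\in G$ with $|u'-v'|$ arbitrarily close to $|u-v|$ and then pass to the supremum; this keeps the whole argument inside $G$ and avoids mentioning the closure at all.
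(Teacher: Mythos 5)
Your proof is correct and takes essentially the same approach as the paper's: both use Lemma \ref{1.1} to reduce the claim to the estimate $|x-y|+2\min\{d(x),d(y)\}\le d(G)$, and both establish that estimate by summing the distances of four collinear points on the line through $x$ and $y$. The only difference is cosmetic: the paper takes the two outer points on $\partial G$ where this line exits the bounded domain (so that $|x_1-x|\ge d(x)$ and $|y-y_1|\ge d(y)$), while you take them on the inscribed spheres $S(x,d(x))$ and $S(y,d(y))$ and then use that the diameter of $G$ equals that of $\overline{G}$.
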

\begin{proof}
Fix $x,y\in G$ and a line $L$ through $x,y\,.$ Then there are points $x_1, y_1\in L\cap \partial G$ such that $x_1, x, y, y_1$ are in this order on $L$ and hence
\bequu
d(G)\geq |x_1-y_1|& = & |x_1-x|+|x-y|+|y-y_1|\\
&\geq & |x-y|+2\min \{d(x), d(y)\}\,.
\eequu
The proof follows from Lemma \ref{1.1}.
\end{proof}

\begin{lem}\label{1.3}
Let $G$ be a proper subdomain of $\Rn$, then for all $x,y\in G$,
\begin{enumerate}
\item
\[
\frac{1}{\sqrt{2}}\, p_G(x,y) \leq s_G(x,y)\leq 2 p_G(x,y)\,,
\]
\item
\[
s_G(x,y)\leq \frac{p_G(x,y)}{1- p_G(x,y)} \,.
\]
\end{enumerate}
\end{lem}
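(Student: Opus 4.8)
The plan is to obtain part (1) by simply chaining together the inequalities already proved in Lemmas \ref{1.1}, \ref{1.2} and \ref{1.4}, and to treat part (2) separately, since it is a genuine refinement (for small values of $p_G$ the bound $p_G/(1-p_G)$ is smaller than $2p_G$) that does not follow from part (1).

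For the upper estimate $s_G\le 2p_G$ I would combine $s_G\le 2j^*_G$ (Lemma \ref{1.2}) with $j^*_G\le p_G$ (first inequality of Lemma \ref{1.4}). For the lower estimate $\tfrac{1}{\sqrt2}\,p_G\le s_G$ I would use $p_G\le\sqrt2\,j^*_G$ (last inequality of Lemma \ref{1.4}) together with $j^*_G\le s_G$ (Lemma \ref{1.1}), which yields $p_G\le\sqrt2\,s_G$. So part (1) requires no new computation.

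For part (2) I would start from the elementary bound furnished by the triangle inequality: for every $z\in\partial G$ one has $|x-z|\ge d(x)$ and $|z-y|\ge d(y)$, whence $s_G(x,y)\le |x-y|/(d(x)+d(y))$. It then suffices to prove $|x-y|/(d(x)+d(y))\le p_G/(1-p_G)$. Setting $a=|x-y|$ and rationalizing, I would rewrite
\[
\frac{p_G}{1-p_G}=\frac{a}{\sqrt{a^2+4d(x)d(y)}-a}=\frac{a\bigl(\sqrt{a^2+4d(x)d(y)}+a\bigr)}{4\,d(x)d(y)},
\]
so that, after cancelling $a$ (the case $a=0$ being trivial), the claim reduces to
\[
4\,d(x)d(y)\le (d(x)+d(y))\bigl(\sqrt{a^2+4d(x)d(y)}+a\bigr).
\]

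The last step is to verify this inequality, which is where the only real content lies, although it is still routine: since $\sqrt{a^2+4d(x)d(y)}\ge 2\sqrt{d(x)d(y)}$ and $d(x)+d(y)\ge 2\sqrt{d(x)d(y)}$ by the arithmetic--geometric mean inequality, the right-hand side is at least $\bigl(2\sqrt{d(x)d(y)}\bigr)^2=4\,d(x)d(y)$, as required. I do not anticipate any serious obstacle; the main point to get right is the rationalization of $p_G/(1-p_G)$ and the observation that comparing with the coarse bound $|x-y|/(d(x)+d(y))$, rather than with $s_G$ directly, is already enough.
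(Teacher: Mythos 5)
Your proof is correct, and it departs from the paper's in two places. For the upper bound in (1) you and the paper do essentially the same thing: the paper applies Lemma \ref{1.2} and then re-derives $j^*_G\le p_G$ inline from $d(y)\le d(x)+|x-y|$, which is exactly the content of the first inequality of Lemma \ref{1.4} that you quote instead. For the lower bound in (1), however, the paper simply cites an external result, \cite[Lemma 3.4(2)]{chkv}, whereas your chain $p_G\le\sqrt{2}\,j^*_G\le\sqrt{2}\,s_G$ (last inequality of Lemma \ref{1.4} followed by Lemma \ref{1.1}) keeps the argument self-contained within the paper's own lemmas --- a genuine, if small, gain in economy. The routes differ most in (2): the paper rewrites the first inequality of Lemma \ref{1.4} as $w/(1+w)\le p_G(x,y)$ with $w=(e^{j_G(x,y)}-1)/2$, solves for $w$ to get $w\le p_G(x,y)/(1-p_G(x,y))$, and finishes with $s_G(x,y)\le w$ from Lemma \ref{1.1} --- two lines and no computation, but it leans on the $j$-metric machinery. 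Your argument instead compares $p_G/(1-p_G)$ directly with the crude bound $|x-y|/(d(x)+d(y))$ via rationalization and the arithmetic--geometric mean inequality; it is more computational but entirely elementary, using neither Lemma \ref{1.4} nor $j_G$ at all, so it would survive even in a setting where those lemmas were unavailable. One point worth making explicit in your write-up: the rationalization requires $1-p_G(x,y)>0$, which does hold automatically because $x,y\in G$ open gives $d(x)d(y)>0$ and hence $p_G(x,y)<1$ strictly.
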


\begin{proof}
By symmetry we may suppose that $d(x)\le d(y)\,.$

(1) The lower bound follows from \cite[Lemma 3.4 (2)]{chkv}. For the upper bound observe that by Lemma \ref{1.2}
 \[
 s_G(x,y)\leq \frac{2|x-y|}{|x-y|+2d(x)}\leq \frac{2|x-y|}{\sqrt{|x-y|^2+4 d(x) d(y)}}=  p_G(x,y)\,,
 \]
 where the second inequality follows from the inequality $d(y)\leq d(x)+|x-y|\,.$

(2) The first inequality in Lemma \ref{1.4} can be written as
 $$   \frac{w}{1+w} \leq p_G(x,y)\,,\quad w= (e^{j_G(x,y)}-1)/2 \,.$$
This inequality implies (2) because $s_G(x,y) \leq w$ by Lemma \ref{1.1}.
\end{proof}
In \cite[3.23]{chkv}, it was proved that $\th \frac{\rho_{\Bn}(x,y)}{2}\leq 2 s_{\Bn}(x,y),$ for all $x, y\in \Bn\,.$ We next apply Lemma \ref{1.1} to improve this upper bound. Note that by \cite[(2.4), 3.4]{chkv} we have for all $x,y\in \mathbb{H}^n$
\begin{equation}{\label{mysH}}
s_{\mathbb{H}^n}(x,y)=p_{\mathbb{H}^n}(x,y)=\th \frac{\rho_{\mathbb{H}^n}(x,y)}{2}\, .
\end{equation}

\begin{lem}\label{lem2.12}
For $x, y\in \mathbb{B}^n$ we have
$$\th \frac{\rho_{\mathbb{B}^n}(x,y)}{4}\leq s_{\mathbb{B}^n}(x,y)\leq p_{\mathbb{B}^n}(x,y)\leq \th \frac{\rho_{\mathbb{B}^n}(x,y)}{2} \leq 2 \th \frac{\rho_{\mathbb{B}^n}(x,y)}{4}\,.$$
\end{lem}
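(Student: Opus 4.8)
The plan is to read the chain as four separate comparisons and treat them one at a time. Throughout, abbreviate $\rho=\rho_{\Bn}(x,y)$ and write $d(x)=1-|x|,\ d(y)=1-|y|$ for the boundary distances in $\Bn$; I will lean on the two closed forms $\sh\frac{\rho}{2}=\frac{|x-y|}{\sqrt{1-|x|^2}\sqrt{1-|y|^2}}$ from \eqref{sro} and $\th\frac{\rho}{2}=\frac{|x-y|}{\sqrt{|x-y|^2+(1-|x|^2)(1-|y|^2)}}$ derived just after it.

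The two outer inequalities are quick. For the rightmost one, $\th\frac{\rho}{2}\le 2\th\frac{\rho}{4}$, I would use the $\tanh$ double-angle identity $\th 2u=\frac{2\th u}{1+\th^2 u}\le 2\th u$ with $u=\rho/4$. For the leftmost one, $\th\frac{\rho}{4}\le s_{\Bn}$, I would invoke $\rho\le 2\,j_{\Bn}(x,y)$ from \eqref{jrho}; since $\th$ is increasing this gives $\th\frac{\rho}{4}\le\th\frac{j_{\Bn}(x,y)}{2}=j^*_{\Bn}(x,y)$, and Lemma \ref{1.1} supplies $j^*_{\Bn}\le s_{\Bn}$. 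This is precisely the promised sharpening of the known bound $\th\frac{\rho}{2}\le 2s_{\Bn}$, because the same double-angle identity yields $\th\frac{\rho}{4}\ge\frac12\th\frac{\rho}{2}$.

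For the third inequality, $p_{\Bn}\le\th\frac{\rho}{2}$, I would just compare denominators in the explicit formulas: $p_{\Bn}^2=\frac{|x-y|^2}{|x-y|^2+4d(x)d(y)}$ whereas $\th^2\frac{\rho}{2}=\frac{|x-y|^2}{|x-y|^2+(1-|x|^2)(1-|y|^2)}$, and the factorization $(1-|x|^2)(1-|y|^2)=(1+|x|)(1+|y|)\,d(x)d(y)\le 4\,d(x)d(y)$ shows the denominator of $p_{\Bn}^2$ is the larger, hence $p_{\Bn}\le\th\frac{\rho}{2}$.

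The middle inequality $s_{\Bn}\le p_{\Bn}$ is the real content and the step I expect to be hardest, since the earlier general estimate only gives $s_G\le 2p_G$ (Lemma \ref{1.3}). Writing $s_{\Bn}(x,y)=|x-y|/\min_{z\in S^{n-1}}(|x-z|+|z-y|)$, the claim is equivalent to the pointwise estimate
\[
(|x-z|+|z-y|)^2\ge |x-y|^2+4\,d(x)d(y)\qquad\text{for every }z\in S^{n-1}.
\]
My plan is to expand the left side by the law of cosines at $z$: with $\alpha=\measuredangle(x,z,y)$ one gets $(|x-z|+|z-y|)^2-|x-y|^2=4\,|x-z|\,|z-y|\cos^2\frac{\alpha}{2}$, so it remains to prove $|x-z|\,|z-y|\cos^2\frac{\alpha}{2}\ge d(x)d(y)$. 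The key observation is that if $\theta_x$ is the angle between $x-z$ and the inward normal $-z$ at $z$, then $|x-z|\cos\theta_x=1-\langle x,z\rangle\ge 1-|x|=d(x)$, and likewise for $y$; moreover, since the angular distance between unit vectors obeys the triangle inequality, $\alpha\le\theta_x+\theta_y$, with $\theta_x,\theta_y\in[0,\tfrac{\pi}{2})$. Then
\[
|x-z|\,|z-y|\cos^2\tfrac{\alpha}{2}\ge \frac{d(x)d(y)}{\cos\theta_x\cos\theta_y}\,\cos^2\Big(\tfrac{\theta_x+\theta_y}{2}\Big)\ge d(x)d(y),
\]
where the last step reduces, via product-to-sum, to $1\ge\cos(\theta_x-\theta_y)$. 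The only thing needing care is the sign and range bookkeeping (all the cosines are positive because $\theta_x,\theta_y<\frac{\pi}{2}$, and $\cos$ is decreasing where it is applied); granting this, the estimate holds for every $z$, hence for the minimizing $z$, which gives $s_{\Bn}\le p_{\Bn}$ and closes the chain.
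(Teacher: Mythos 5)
Your proposal is correct, and for the two middle inequalities it is genuinely more self-contained than the paper's proof. The paper handles the outer inequalities exactly as you do (left: $\rho_{\Bn}\le 2j_{\Bn}$ plus $j^*_{\Bn}\le s_{\Bn}$ from Lemma \ref{1.1}; right: the half/double-angle identity for $\th$), but it disposes of $s_{\Bn}\le p_{\Bn}$ and $p_{\Bn}\le\th\frac{\rho_{\Bn}}{2}$ by citing \cite[Lemmas 3.4(1) and 3.8]{chkv} rather than proving them. Your algebraic comparison $(1-|x|^2)(1-|y|^2)=(1+|x|)(1+|y|)\,d(x)d(y)\le 4\,d(x)d(y)$ is a clean one-line replacement for the second citation, and your geometric argument for $s_{\Bn}\le p_{\Bn}$ --- law of cosines at $z$, the normal-angle bound $|x-z|\cos\theta_x=1-\langle x,z\rangle\ge d(x)$, the angle triangle inequality $\alpha\le\theta_x+\theta_y$, and the reduction to $\cos^2\frac{\theta_x+\theta_y}{2}\ge\cos\theta_x\cos\theta_y$, i.e.\ $1\ge\cos(\theta_x-\theta_y)$ --- is a valid replacement for the first (I checked each step, including that $\cos\theta_x>0$ forces $\theta_x\in[0,\pi/2)$, so all cosines appearing are positive and the monotonicity of $\cos$ on $[0,\pi/2]$ applies). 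What the citation-based route buys is brevity and generality: \cite[Lemma 3.4(1)]{chkv} gives $s_G\le p_G$ for every convex $G$, not just the ball. What your route buys is a proof readable without consulting \cite{chkv}; moreover, since $1-\langle x,z\rangle$ is the distance from $x$ to the supporting hyperplane at $z$, your normal-angle estimate in fact extends to arbitrary convex domains, so your argument secretly reproves the convex case as well.
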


\begin{proof}
For the first inequality, by Lemma \ref{1.1}, and \eqref{jrho}, we have
$$s_{\mathbb{B}^n}(x,y)\geq j^*_{\mathbb{B}^n}(x,y)= \th \frac{j_{\mathbb{B}^n}(x,y)}{2}\geq \th \frac{\rho_{\mathbb{B}^n}(x,y)}{4}\,.$$
The second and the third inequalities follows from \cite[Lemmas 3.4 (1) and 3.8]{chkv}
For the last inequality, by \cite[2.29 (1)]{vu},
\[
\th \frac{\rho_{\mathbb{B}^n}(x,y)}{4}=\frac{\th {\rho_{\mathbb{B}^n}(x,y)/2}}{1+\sqrt{1-\th^2 {\rho_{\mathbb{B}^n}(x,y)/2}}}.
\]
Therefore \[
2\th \frac{\rho_{\mathbb{B}^n}(x,y)}{4}\geq \th \frac{\rho_{\mathbb{B}^n}(x,y)}{2}\,,
\]
since $1+\sqrt{1-\th^2 {\rho_{\mathbb{B}^n}(x,y)/2}}\leq 2\,.$

\end{proof}



\begin{lem}
(1) Let $G$ be a proper subdomain of $\mathbb{R}^n\,.$ If $x, y\in G$, then
\[
\th{\frac{j_G(x,y)}{2}}\leq p_G(x,y)\leq \th{j_G(x,y)} \,.
\]

(2) If $G \subset  \mathbb{R}^n$ is a convex domain, $x,y \in G$ and $m= \min \{d(x), d(y)\}\,,$ then
\[
    {\rm th} (j_G(x,y)/2) \le s_G(x,y) \le \frac{|x-y|}{\sqrt{|x-y|^2 + 4 m^2}} \le {\rm th} j_G(x,y) \,.
\]
\end{lem}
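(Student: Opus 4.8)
The plan is to handle the two parts separately, reducing nearly everything to Lemma~\ref{1.1} and Lemma~\ref{1.4} together with one elementary inequality, and to isolate the single genuinely geometric step, which is the middle inequality of part~(2). Throughout write $m=\min\{d(x),d(y)\}$ and $w=(e^{j_G(x,y)}-1)/2$, so that $e^{j_G}=2w+1$ and, by the definition of $j_G$, $w=|x-y|/(2m)$.

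For part~(1), the lower bound $\th(j_G/2)\le p_G$ is immediate: Lemma~\ref{1.1} gives $\th(j_G/2)=j^*_G$, and the first inequality of Lemma~\ref{1.4} gives $j^*_G\le p_G$. For the upper bound, Lemma~\ref{1.4} already supplies $p_G\le w/\sqrt{w^2+1}$, so it suffices to prove $w/\sqrt{w^2+1}\le\th j_G$. Writing $\th j_G=(e^{2j_G}-1)/(e^{2j_G}+1)$ and substituting $e^{j_G}=2w+1$ yields $\th j_G=2w(w+1)/(2w^2+2w+1)$; cancelling $w$, cross-multiplying and squaring then reduces the desired inequality to $2w^2+2w+1\le 2(w+1)\sqrt{w^2+1}$ and finally to $4w+3\ge 0$, which is trivially true.

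Turning to part~(2), the first inequality $\th(j_G/2)\le s_G$ is precisely the first inequality of Lemma~\ref{1.1}, valid for any proper subdomain. The third inequality is again elementary: from $w=|x-y|/(2m)$ one checks the identity $w/\sqrt{w^2+1}=|x-y|/\sqrt{|x-y|^2+4m^2}$, so the bound $|x-y|/\sqrt{|x-y|^2+4m^2}\le\th j_G$ is exactly the inequality just established in part~(1). Hence convexity is needed only for the middle inequality, which I expect to be the main obstacle.

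The heart of the matter is
\[
s_G(x,y)\le\frac{|x-y|}{\sqrt{|x-y|^2+4m^2}}\,,
\]
equivalently $|x-z|+|z-y|\ge\sqrt{|x-y|^2+4m^2}$ for every $z\in\partial G$. The locus $\{z:|x-z|+|z-y|=\sqrt{|x-y|^2+4m^2}\}$ is the prolate spheroid with foci $x,y$ whose semi-minor axis equals $m$ (indeed $b^2=a^2-c^2=(|x-y|^2+4m^2)/4-|x-y|^2/4=m^2$), so the claim is equivalent to the open solid spheroid $E$ being contained in $G$. To prove this I would argue: since $d(x),d(y)\ge m$, both balls $B^n(x,m)$ and $B^n(y,m)$ lie in $G$, and by convexity so does their convex hull, the capsule $K=\{z:\operatorname{dist}(z,[x,y])\le m\}$. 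It then remains to verify the purely elementary containment $E\subseteq K$. By rotational symmetry about the axis through $x,y$ this is a two-dimensional check: placing the center at the origin with foci at $(\pm c,0)$, $c=|x-y|/2$, and writing a spheroid point as $(u,w)$ with $w^2=m^2(1-u^2/a^2)$, the distance to the segment is $|w|\le m$ when $|u|\le c$, while for $u\in(c,a]$ the requirement $(u-c)^2+w^2\le m^2$ becomes $f(u):=m^2u^2-(c^2+m^2)(u-c)^2\ge 0$ (the case $u<-c$ following by symmetry). Since $f$ is concave in $u$ (leading coefficient $-c^2<0$), its minimum on $[c,a]$ is at an endpoint, and $f(c)=m^2c^2\ge 0$, $f(a)=a^2(m^2-(a-c)^2)\ge 0$ because $a-c=\sqrt{c^2+m^2}-c\le m$; hence $f\ge 0$ throughout and $E\subseteq K\subseteq G$. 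Consequently every $z\in\partial G$ lies outside the open spheroid, which gives the required lower bound on $|x-z|+|z-y|$ and hence the middle inequality.
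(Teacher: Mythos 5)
Your proof is correct, and for the crucial step it takes a genuinely different route from the paper. Part (1) is essentially the paper's own argument in different variables: the paper bounds $p_G(x,y)\le |x-y|/\sqrt{|x-y|^2+4d(x)^2}$ (assuming $d(x)\le d(y)$) and verifies $|x-y|/\sqrt{|x-y|^2+4d(x)^2}\le \th j_G(x,y)$ by cross-multiplying and squaring, reducing to $4b^2t^3(2b+3t)\ge 0$; after the substitution $w=|x-y|/(2d(x))$ this is the same computation as your reduction to $4w+3\ge 0$, since $w/\sqrt{w^2+1}$ equals that middle quantity. The real divergence is in part (2): the paper disposes of the middle inequality by citing \cite[Lemma 3.4(1)]{chkv}, which asserts $s_G\le p_G$ for convex $G$, and then uses $p_G\le |x-y|/\sqrt{|x-y|^2+4m^2}$; you instead prove the middle inequality from scratch, identifying the level set $\{z:|x-z|+|z-y|=\sqrt{|x-y|^2+4m^2}\}$ as the prolate spheroid with foci $x,y$ and semi-minor axis $m$, and showing that this spheroid lies in the capsule around $[x,y]$, hence in $G$ by convexity, so that no boundary point can enter it. This makes the lemma self-contained where the paper leans on an external result, at the cost of reproving (a special case of) that result; note your containment argument yields only the bound with $4m^2$, which is weaker than $s_G\le p_G$ (that has $4d(x)d(y)$), but it is exactly what the statement requires. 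One small point to tighten: since $B^n(x,m)$ and $B^n(y,m)$ are open balls, convexity of $G$ gives you only the \emph{open} capsule $\{z:\mathrm{dist}(z,[x,y])<m\}\subseteq G$, not the closed set $K$ as you wrote it (when $m=d(x)$ the closed capsule may meet $\partial G$). This is harmless: your inequalities are strict for points of the open spheroid $E$ (alternatively, $E$ is open and contained in $K$, hence in the interior of $K$), so $E\subseteq G$ and every $z\in\partial G$ still satisfies $|x-z|+|z-y|\ge\sqrt{|x-y|^2+4m^2}$.
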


\begin{proof}
(1) For the second inequality, by symmetry we may assume that $d(x)\leq d(y)\,.$ Writing $|x-y|=b$,
\[
p_G(x,y)=\frac{b}{\sqrt{b^2+4\,d(x)\,d(y)}}\leq \frac{b}{\sqrt{b^2+4\,d(x)^2}}\,,
\]
we have
\[
\th{j_G(x,y)}=\frac{e^{2j_G(x,y)}-1}{e^{2j_G(x,y)}+1}=\frac{\left(1+\frac{b}{d(x)}\right)^2-1}{\left(1+\frac{b}{d(x)}\right)^2+1}=
\frac{b^2+2b\,d(x)}{b^2+2b\,d(x)+2d(x)^2}\,.
\]
Denote $t=d(x)\,.$ Then the inequality
\[
p_G(x,y)\leq \th {j_G(x,y)}\,,
\]
is equivalent to
\[
\frac{b}{\sqrt{b^2+4\,t^2}}\leq \frac{b^2+2b\,t+2t^2}{b^2+2b\,t}\,,
\]
and the last inequality is equivalent to $4b^2\,t^3(2b+3t)\geq 0$, which is true, since $t=d(x)>0\,.$

The first inequality follows from Lemma \ref{1.4}.

(2) Fix $x,y \in G\,.$  Because $G$ is convex, by \cite[Lemma 3.4(1)]{chkv} and the proof of
(1) we have
$$
  s_G(x,y) \le p_G(x,y) \le \frac{|x-y|}{\sqrt{|x-y|^2 + 4 m^2}} \le {\rm th} j_G(x,y) \,.
$$
The first inequality in the claim follows from  Lemma \ref{1.4}.
\end{proof}


\begin{lem} \label{2.18}
For a convex domain $G\subsetneq\Rn$ and all $x, y\in G$ we have ${v_G(x,y)}\geq s_G(x,y)\geq j^*_G(x,y)\,.$
\end{lem}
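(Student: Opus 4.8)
The second inequality $s_G(x,y)\ge j^*_G(x,y)$ holds for every proper subdomain and is precisely the first inequality of Lemma \ref{1.1}, so only $v_G(x,y)\ge s_G(x,y)$ uses the convexity of $G$. The plan is to localize this inequality to a supporting half-space and then to settle the half-space case by an elementary inscribed-angle argument. The reason a purely pointwise estimate cannot work is that the supremum defining $s_G$ and the supremum defining $v_G$ are attained at \emph{different} boundary points; at the minimizer of $|x-z|+|z-y|$ the angle $\measuredangle(x,z,y)$ may even be $0$.

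For the reduction, fix $x,y\in G$ and choose $z_0\in\partial G$ minimizing $z\mapsto |x-z|+|z-y|$ (the minimum is attained since this function is coercive on the closed set $\partial G$). Put $L=|x-z_0|+|z_0-y|$, so that $s_G(x,y)=|x-y|/L$. The level set $E=\{z:|x-z|+|z-y|=L\}$ is an ellipsoid with foci $x,y$; the open region $\{|x-z|+|z-y|<L\}$ is connected, contains $x,y$, and misses $\partial G$, so $\operatorname{int}E\subseteq G$, and $E$ is tangent to $\partial G$ at $z_0$. By convexity $G$ has a supporting hyperplane at $z_0$, and since $\operatorname{int}E\subseteq G$ this hyperplane must be the tangent hyperplane of $E$ at $z_0$; let $H$ be the corresponding closed half-space, so $G\subseteq H$ and $\partial H$ is tangent to $E$ at $z_0$. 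Two facts follow. First, $\partial H$ does not meet $\operatorname{int}E$, so $L$ is also the minimum of $|x-z|+|z-y|$ over $z\in\partial H$, whence $s_H(x,y)=|x-y|/L=s_G(x,y)$. Second, $v$ is monotone under inclusion: writing $A_\phi=\{z:\measuredangle(x,z,y)\ge\phi\}$ for the convex lens through $x,y$, one checks that $v_G=\sup\{\phi:A_\phi\not\subseteq G\}$ (if $A_\phi\not\subseteq G$, then, $A_\phi$ being connected and meeting $G$, it meets $\partial G$), and $G\subseteq H$ then gives $v_G\ge v_H$. It therefore suffices to prove $v_H(x,y)\ge s_H(x,y)$.

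For the half-space case I would work in the plane $\Pi$ through $x,y$ orthogonal to $\partial H$ and let $x'$ be the reflection of $x$ in $\partial H$, so that $L=|x'-y|$ and $s_H=|x-y|/L$. Among the two circles in $\Pi$ through $x$ and $y$ of radius $L/2$, consider the one whose centre is closer to $\partial H$. By the inscribed-angle theorem every point of this circle sees the chord $[x,y]$ under an angle at least $\arcsin(|x-y|/L)=\arcsin s_H$. A short computation shows its centre has height at most $(d(x)+d(y))/2\le L/2$ above $\partial H$, so, the radius being $L/2$, its lowest point lies on or below $\partial H$; since the circle also passes through the interior points $x,y$ of $H$, it meets $\partial H$ at some $w$. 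Then $\measuredangle(x,w,y)\ge\arcsin s_H\ge s_H$, whence $v_H\ge\arcsin s_H\ge s_H$, and combining with the reduction gives $v_G\ge s_G$.

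The routine ingredients are Lemma \ref{1.1}, the inscribed-angle estimate, the height inequality $(d(x)+d(y))/2\le L/2$, and $\arcsin t\ge t$ on $[0,1]$. The main obstacle, and the only place convexity is genuinely needed, is the reduction step: proving $\operatorname{int}E\subseteq G$ together with the identification of the supporting hyperplane as tangent to $E$ (which yields $s_G=s_H$), and the monotonicity $v_G\ge v_H$. Once the problem has been transported to the supporting half-space the circle argument is completely elementary; the subtle point is precisely that one must estimate the angle at the circle--boundary intersection $w$ rather than at the $s$-extremal point $z_0$, and this is made legitimate by the monotonicity of $v$.
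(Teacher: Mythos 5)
Your proof is correct, and it is genuinely different from the paper's: the paper's proof of this lemma is a one-line citation, taking $s_G\le v_G$ for convex $G$ from \cite[Lemma 2.16]{hvw} and $s_G\ge j^*_G$ from Lemma \ref{1.1} (this second step you do identically). In place of the citation you give a self-contained argument: reduce to a supporting half-space $H\supseteq G$ at the $s$-extremal boundary point $z_0$, use monotonicity of $v$ under domain inclusion (your lens characterization is sound, and this monotonicity is exactly what the paper itself invokes in the proof of Theorem \ref{myHdthm}), and settle the half-space case by reflection plus the inscribed-angle theorem. The half-space computation checks out: $L=|x'-y|\ge d(x)+d(y)$ forces the lower circle of radius $L/2$ through $x,y$ to meet $\partial H$, giving $v_H\ge\arcsin s_H$. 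What the citation buys the paper is brevity; what your argument buys is self-containedness and a sharper conclusion, namely $v_G\ge\arcsin s_G$ for every convex $G$, which in the half-space case is precisely the sharp bound $v_{\mathbb{H}^n}\ge\arctan\sh\frac{\rho_{\mathbb{H}^n}}{2}=\arcsin\th\frac{\rho_{\mathbb{H}^n}}{2}=\arcsin s_{\mathbb{H}^n}$ obtained by combining \cite[Theorem 3.19]{klvw} with \eqref{mysH}.

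Two details deserve a line each. First, your claim that $\{z:|x-z|+|z-y|<L\}$ contains $x$ and $y$ requires $L>|x-y|$, i.e. $s_G(x,y)<1$; this is where convexity enters once more, since $[x,y]\subset G$ implies $|x-z|+|z-y|>|x-y|$ for every $z\in\partial G$, and the minimum $L$ is attained. Second, the ellipsoid-tangency discussion can be bypassed: once a supporting hyperplane of $G$ at $z_0$ is chosen, the mere fact $z_0\in\partial H$ gives $s_H(x,y)\ge |x-y|/(|x-z_0|+|z_0-y|)=s_G(x,y)$, which is the only inequality used in the chain $v_G\ge v_H\ge \arcsin s_H\ge s_H\ge s_G$; identifying $\partial H$ as the tangent hyperplane of the ellipsoid $E$ is not needed.
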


\begin{proof}
By \cite[Lemma 2.16]{hvw} $s_G(x,y)\leq v_G(x,y)$, so the result follows directly from Lemma \ref{1.1}.
\end{proof}

\begin{thm} \label{svconvex}
For a convex domain $G\subsetneq\Rn$ and all $x, y\in G$ we have
\begin{enumerate}
\item
\[
s_G(x,y)\leq \sqrt{2}j^*_G(x,y)\,,
\]
and
\item
\[
v_G(x,y)\geq \frac{1}{\sqrt{2}} p_G(x,y)\,.
\]
\end{enumerate}
\end{thm}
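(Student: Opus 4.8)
The plan is to derive both parts from the point-pair function estimates already established, using convexity only through the bound $s_G\le p_G$ valid for convex domains. For part (1), I would start from Lemma \ref{1.3}(2), which gives $s_G(x,y)\le p_G(x,y)/(1-p_G(x,y))$, but this alone does not produce the constant $\sqrt 2$; instead the cleaner route is to combine the convex-domain inequality $s_G(x,y)\le p_G(x,y)$ (from \cite[Lemma 3.4(1)]{chkv}, used already in Lemma \ref{2.18} and the preceding lemma) with the chain in Lemma \ref{1.4}, namely $p_G(x,y)\le w/\sqrt{w^2+1}\le\sqrt2\,j^*_G(x,y)$ where $w=(e^{j_G(x,y)}-1)/2$. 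Chaining these gives immediately
\[
s_G(x,y)\le p_G(x,y)\le \sqrt{2}\,j^*_G(x,y)\,,
\]
which is exactly the claim.

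For part (2), the idea is to feed the convex bound back through the visual angle comparison. By Lemma \ref{2.18}, for convex $G$ we have $v_G(x,y)\ge s_G(x,y)$. On the other hand, Lemma \ref{1.3}(1) supplies the universal lower bound $s_G(x,y)\ge \tfrac{1}{\sqrt2}\,p_G(x,y)$, valid for any proper subdomain and hence in particular for convex $G$. Combining these two inequalities yields
\[
v_G(x,y)\ge s_G(x,y)\ge \frac{1}{\sqrt 2}\,p_G(x,y)\,,
\]
which is the desired estimate. Thus both parts reduce to stitching together previously proved inequalities, with convexity entering only in part (1) (through $s_G\le p_G$) and in part (2) (through Lemma \ref{2.18}).

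The main thing to verify carefully is that the inequalities being chained are all oriented correctly and hold under the stated hypotheses: in particular that the convex-domain estimate $s_G\le p_G$ is genuinely available (it is, as it was invoked in the lemma immediately preceding Lemma \ref{2.18}), and that the final bound $p_G\le\sqrt2\,j^*_G$ from Lemma \ref{1.4} applies with no extra assumption. Since each link in both chains is a previously established lemma, I do not expect a genuine obstacle; the only subtlety is bookkeeping, namely choosing the right intermediate quantity ($p_G$ in both cases) so that the telescoping produces exactly the constant $\sqrt2$ rather than a weaker factor. If desired, one could also record sharpness by testing $G=\Rn\setminus\{0\}$ or a half-space, but since the statement makes no sharpness claim, I would keep the proof to the two short chains above.
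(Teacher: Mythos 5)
Your proposal is correct and follows essentially the same route as the paper: part (1) is exactly the paper's argument, chaining the convex-domain bound $s_G\le p_G$ from \cite[Lemma 3.4(1)]{chkv} with $p_G\le\sqrt{2}\,j^*_G$ from Lemma \ref{1.4}. For part (2) the paper writes the chain as $v_G\ge s_G\ge j^*_G\ge \frac{1}{\sqrt{2}}p_G$ (Lemmas \ref{2.18} and \ref{1.4}), whereas you use $v_G\ge s_G\ge \frac{1}{\sqrt{2}}p_G$ via Lemma \ref{1.3}(1); this is an immaterial variation, since both final links are universally valid restatements of the same $\sqrt{2}$-comparison with $p_G$.
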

\begin{proof}
(1) This inequality follows from Lemma \ref{1.4} and \cite[Lemma 3.4]{chkv}.\\
(2) By Lemmas \ref{1.4} and \ref{2.18} we have
\[
v_G(x,y)\ge j^*_G(x,y)\ge \frac{1}{\sqrt{2}}p_G(x,y)\,.\qedhere
\]
\end{proof}

The next theorem shows that the constant $1/\sqrt{2}$ in Theorem \ref{svconvex} (2) can be improved for the case of a half space or a ball to be $1\,.$
The sharp constant in the case of a convex domain  will be given in Remark \ref{sharpc}.

\begin{thm}\label{2.13}
Let $G$ be a half space or a ball in the Euclidean space $\mathbb{R}^n\,.$ Then for all $x, y\in G$
$$v_{G}(x,y)\geq p_{G}(x,y)\,.$$
\end{thm}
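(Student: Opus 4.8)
The plan is to prove the inequality $v_G(x,y)\geq p_G(x,y)$ separately for the half space and the ball, exploiting the conformal/Möbius structure of each. For the half space, the strategy is to reduce to an explicit two-dimensional computation. Since both the visual angle $v_G$ and the point pair function $p_G$ are determined by the positions of $x$, $y$ relative to the boundary hyperplane $\partial G$, and since any pair $x,y$ together with the relevant boundary point lies in a plane, I would first normalize by a similarity so that $G=\mathbb{H}^n$ with boundary the hyperplane $x_n=0$, and then reduce to the planar case $G=\mathbb{H}^2$. Here I would parametrize $x,y$ by their heights $d(x),d(y)$ above the boundary and the horizontal separation, write $p_{\mathbb{H}^n}(x,y)$ explicitly using its definition, and compute $v_{\mathbb{H}^n}(x,y)$ by maximizing the angle $\measuredangle(x,z,y)$ over boundary points $z$. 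The optimal $z$ is the point from which the segment $[x,y]$ subtends the largest angle, which by the classical inscribed-angle principle is where a circle through $x,y$ is tangent to $\partial G$. The identity \eqref{mysH}, namely $p_{\mathbb{H}^n}(x,y)=\operatorname{th}(\rho_{\mathbb{H}^n}(x,y)/2)$, lets me replace $p$ by a hyperbolic quantity and compare the two extremal problems.

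For the ball, my plan is to transfer the half-space result by a Möbius transformation rather than redo the extremal computation. The visual angle metric is \emph{not} Möbius invariant in general, but the point pair function $p_G$ relates to the hyperbolic metric, which is Möbius invariant. So I would aim for a comparison argument: given $x,y\in\mathbb{B}^n$, apply a Möbius transformation $h$ mapping $\mathbb{B}^n$ onto a half space (or onto itself) to move $x,y$ into a symmetric configuration, track how $p$ transforms (via $\rho$, which is preserved), and bound how the angle changes. Alternatively, and more cleanly, I would compute $v_{\mathbb{B}^n}(x,y)$ directly by again using that $x$, $y$ and the extremal boundary point $z$ lie in a plane through the configuration; by rotational symmetry of the ball I can assume $x,y$ lie in a fixed $2$-plane through the origin, reducing everything to $\mathbb{B}^2$. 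Then I would maximize $\measuredangle(x,z,y)$ over $z\in S^{1}$, where the extremal $z$ is again characterized by a tangency condition (the circle through $x,y$ internally tangent to the unit circle).

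The technical core in both cases is the same extremal angle computation, so I would isolate it as the key step: for two interior points, the maximal subtended angle from the boundary is attained at a tangency point, and the resulting $v_G$ admits a closed form in terms of $d(x)$, $d(y)$, and $|x-y|$. Once both $v_G$ and $p_G$ are expressed in these elementary quantities, the desired inequality $v_G\geq p_G$ should reduce to an algebraic inequality in a few positive variables, which I expect to verify by clearing denominators and checking nonnegativity of a polynomial (in the spirit of the factorization $4b^2t^3(2b+3t)\geq0$ appearing in the earlier lemma). The main obstacle I anticipate is the extremal angle problem itself: locating the boundary point $z$ that maximizes $\measuredangle(x,z,y)$ and justifying that it is the tangency point. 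For the half space this is the standard fact that the angle is maximized where the circle through $x,y$ is tangent to $\partial G$; for the ball the analogous tangency characterization requires more care, since the boundary is curved and one must confirm the maximizer lies in the plane spanned by $x$, $y$, and the center and coincides with the internal-tangency point. After that, comparing the closed form for $v_G$ against $p_G=\operatorname{th}(\rho_G/2)$ and reducing to a polynomial inequality should be routine.
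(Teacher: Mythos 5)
Your plan takes a genuinely different route from the paper: the paper never computes $v_G$ at all, but instead quotes from \cite[Theorems 3.19 and 3.11]{klvw} the lower bound $v_G(x,y)\ge \arctan\bigl(\sh (\rho_G(x,y)/2)\bigr)$ for both $G=\mathbb{H}^n$ and $G=\mathbb{B}^n$ (these citations already contain the tangent-circle geometry you propose to redo from scratch), applies Shafer's inequality $\arctan t\ge 3t/(1+2\sqrt{1+t^2})$, after which the whole proof collapses to the one-line estimate $\sqrt{|x-y|^2+4d(x)d(y)}\ge 2\sqrt{d(x)d(y)}$. Your route could in principle be carried out, but as written it has a genuine gap at the endgame: the tangency computation produces a closed form for $\sin v_G$ (or $\tan v_G$), not for $v_G$, so $v_G\ge p_G$ compares a transcendental quantity (an angle) with an algebraic one and can never be settled by ``clearing denominators and checking nonnegativity of a polynomial.'' You need an explicit bridge, which you never state: for instance, if $v_G>\pi/2$ then $v_G>1\ge p_G$ trivially, while if $v_G\le \pi/2$ then $v_G\ge \sin v_G$, reducing the problem to the algebraic inequality $\sin v_G\ge p_G$, which must then be verified from your closed form. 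This bridge plays exactly the role that Shafer's inequality plays in the paper, and omitting it leaves the proposal's final step incoherent rather than merely unfinished.

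Two further points, both concerning the ball. First, the identity $p_G=\th (\rho_G/2)$ that you plan to compare against is \eqref{mysH} and holds \emph{only} for the half space; for the ball, Lemma \ref{lem2.12} gives only $p_{\mathbb{B}^n}\le \th (\rho_{\mathbb{B}^n}/2)$. That inequality luckily points the right way (proving $v_{\mathbb{B}^n}\ge \th(\rho_{\mathbb{B}^n}/2)$ is stronger than what is needed), but your text treats it as an identity, which it is not. Second, the ball case is the genuinely hard part, not the ``routine'' one: you must prove that the maximizing boundary point lies in the plane of $x$, $y$ and the origin, and solve for the radius of the circle through $x,y$ internally tangent to $S^{n-1}$ -- precisely the labor that the paper avoids by citing \cite{klvw}, and which is the reason its proof is short. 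If you want to keep your structure but shrink this work, note that $\arctan(\sh t)\ge \th t$ for $t\ge 0$ (both vanish at $0$ and $\frac{d}{dt}\arctan(\sh t)=1/\ch t\ge \sech^2 t$), so the KLVW bounds immediately give $v_G\ge \th(\rho_G/2)\ge p_G$ in both domains -- an even cheaper bridge than Shafer's inequality, and one that makes your tangency computations unnecessary.
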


\begin{proof}
Since both the visual angle metric $v_G$ and the point pair function $p_G$ are invariant under the similarities of the domain $G$, we may assume that the domain $G$ is the upper half space $\mathbb{H}^n$ or the unit ball $\mathbb{B}^n\,.$
We first consider the case of $G=\mathbb{H}^n\,.$ By the left-hand side inequality of \cite[Theorem 3.19]{klvw} and the well-known Shafer inequality $\arctan t\geq 3t/(1+2\sqrt{1+t^2})$ for $t>0$ (see \cite{s} or \cite{avz}), we have that
\allowdisplaybreaks\begin{align*}
v_{\mathbb{H}^n}(x,y)&\geq  \arctan\left(\sh\frac{\rho_{\mathbb{H}^n}(x,y)}{2}\right)\\
                     &\geq \frac{3\sh(\rho_{\mathbb{H}^n}(x,y)/2)}{1+2\sqrt{1+\sh^2(\rho_{\mathbb{H}^n}(x,y)/2)}}=:A\,.
\end{align*}
By \eqref{cro}, we have that
$$\sh\left(\frac{\rho_{\mathbb{H}^n}(x,y)}{2}\right)=\sqrt{\frac{\ch \rho_{\mathbb{H}^n}(x,y)-1}{2}}=\frac{|x-y|}{2\sqrt{d(x)d(y)}}\,,$$
and hence
\allowdisplaybreaks\begin{align*}
A&= \frac{3|x-y|}{\sqrt{4d(x)d(y)}+2\sqrt{|x-y|^2+4d(x)d(y)}}\\
&\geq \frac{|x-y|}{\sqrt{|x-y|^2+4d(x)d(y)}}=p_G(x,y)\,.
\end{align*}

For the case of $G=\mathbb{B}^n$, we use the inequality $$\arctan\left(\sh \frac{\rho_{\mathbb{B}^n}(x,y)}{2}\right)\leq v_{\mathbb{B}^n}(x,y)\,,$$ see \cite[Theorem 3.11]{klvw}. The same argument as in the case of the upper half space gives the proof for $v_{\mathbb{B}^n}(x,y)\geq p_{\mathbb{B}^n}(x,y)\,.$
\end{proof}

\begin{rem} \label{sharpc}
For a general convex domain $G\subset \Rn$, the inequality $v_G\geq p_G$ may not hold. Consider the strip domain $S=\{(x,y)\in\mathbb{R}^2:\, -\infty<x<\infty, -1<y<1\}$ and two points $a=(0,t)$, $b=(0,-t)$ for $0<t<1\,.$ Then it is easy to see that
$$p_S(a,b)=\frac{t}{\sqrt{t^2+(1-t)^2}},\qquad {\rm and}\quad v_S(a,b)=\arcsin t\,.$$
We see that
$$C:=\inf\limits_{t\in(0,1)}\frac{v_S(a,b)}{p_S(a,b)}=0.73707\dots>1/\sqrt 2=0.707107\dots$$

Actually, one can prove that, in general, for a convex domain $G$ we have that
\begin{equation}\label{vpconvex}
v_G\geq C p_G,\quad C=0.73707\dots
\end{equation}
Let $t=e^{j_G(x,y)}-1\,.$  To this end we apply the inequality \cite[Theorem 4.1]{vw} which says that for a convex domain $G$ and $x,y \in G\,,$
\[
v_G(x,y)\geq \arcsin{\frac{t}{t+2}}\,.
\]
On the other hand, it is easy to see that
$$
p_G(x,y)\leq \frac{t}{\sqrt{t^2+4}}\,.
$$
Hence we have that
$$
\frac{v_G(x,y)}{p_G(x,y)}\geq\frac{\arcsin(t/(t+2))}{t/\sqrt{t^2+4}}=\frac{\arcsin s}{s/\sqrt{s^2+(1-s)^2}}\geq C\,,
$$
where $s=t/(t+2)\,.$ The above example of the strip domain shows that the constant $C$ is best possible. Thus the inequality \eqref{vpconvex} improves Theorem \ref{svconvex} (2).
\end{rem}

\begin{lem}\label{kz}    
Let $G$ be a proper subdomain of $\Rn$, $z\in G$, and let $\lambda\in (0,1)\,.$ Then for all $x, y\in B(z,\lambda d(z))$
\[
k_{B(z, d(z))}(x,y)\leq \frac{1+\lambda}{1-\lambda}k_G(x,y)\,.
\]

\end{lem}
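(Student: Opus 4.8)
The plan is to reduce the statement to a pointwise comparison of the density functions defining the two quasihyperbolic metrics, using the fact that the quasihyperbolic metric is defined as an infimum over paths of an integral of $1/d(\cdot,\partial G)$. The key observation is that $k_{B(z,d(z))}$ and $k_G$ are both length metrics associated to weights, so it suffices to compare the weights $1/d(w,\partial B(z,d(z)))$ and $1/d(w,\partial G)$ for points $w$ in the smaller ball $B(z,\lambda d(z))$, and then to control the geodesics so that the comparison of weights along them yields the stated constant.

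First I would fix $x,y\in B(z,\lambda d(z))$ and recall the general monotonicity principle: if $G_1\subset G_2$ then $d(w,\partial G_2)\geq d(w,\partial G_1)$ for $w\in G_1$, so the quasihyperbolic weight of the smaller domain dominates that of the larger domain pointwise, and hence $k_{G_2}\leq k_{G_1}$ on $G_1$. Since $B(z,d(z))\subset G$ (by the very definition of $d(z)=d(z,\partial G)$), this gives for free the lower-bound direction of the comparison. The real content is the reverse inequality: I want to bound $k_{B(z,d(z))}(x,y)$ above by a constant multiple of $k_G(x,y)$. For this I would take a nearly quasihyperbolic-geodesic arc $\gamma$ for $k_G$ joining $x$ to $y$; the difficulty is that such a geodesic in $G$ need not stay inside the smaller ball $B(z,d(z))$, so I cannot directly use it to estimate the length integral for $k_{B(z,d(z))}$.

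The main obstacle, therefore, is this confinement issue, and the way around it is to replace $\gamma$ by its radial projection onto the closed ball $\overline{B(z,d(z))}$, or more simply to argue that on the subball $B(z,\lambda d(z))$ the two density functions are comparable by the factor $(1+\lambda)/(1-\lambda)$ and that one may restrict to geodesics lying in this subball. Concretely, for $w\in B(z,\lambda d(z))$ one has $|w-z|<\lambda d(z)$, so $d(w,\partial B(z,d(z)))=d(z)-|w-z|\geq(1-\lambda)d(z)$, while $d(w,\partial G)\leq d(z)+|w-z|\leq(1+\lambda)d(z)$; combining these gives the pointwise weight estimate
\[
\frac{1}{d(w,\partial B(z,d(z)))}\leq\frac{d(z)+|w-z|}{(1-\lambda)d(z)}\cdot\frac{1}{d(w,\partial G)}\cdot\frac{d(w,\partial G)}{d(z)+|w-z|}\leq\frac{1+\lambda}{1-\lambda}\cdot\frac{1}{d(w,\partial G)}.
\]
Thus on the subball the density for the small ball is at most $(1+\lambda)/(1-\lambda)$ times the density for $G$.

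Finally I would close the argument by noting that since $x,y$ lie in the convex set $B(z,\lambda d(z))$, the quasihyperbolic geodesic for $k_G$ can be taken inside $B(z,\lambda d(z))$ — or, if one prefers to avoid invoking geodesic confinement, one integrates the pointwise weight bound along any admissible arc that stays in the subball and passes to the infimum. Along any such arc $\gamma\subset B(z,\lambda d(z))$ the pointwise bound integrates to
\[
\int_\gamma\frac{|dw|}{d(w,\partial B(z,d(z)))}\leq\frac{1+\lambda}{1-\lambda}\int_\gamma\frac{|dw|}{d(w,\partial G)},
\]
and taking the infimum over such arcs on the right yields $k_G(x,y)$ while the left is at least $k_{B(z,d(z))}(x,y)$. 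This gives exactly the claimed inequality $k_{B(z,d(z))}(x,y)\leq\frac{1+\lambda}{1-\lambda}k_G(x,y)$. The point that requires care is justifying that restricting to arcs inside $B(z,\lambda d(z))$ does not increase the $k_G$-infimum beyond $k_G(x,y)$; this follows from the convexity of the ball together with the fact that $k_G$-geodesics between points of a ball well inside $G$ stay within that ball, so the restriction is harmless.
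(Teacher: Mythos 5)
Your proposal follows essentially the same route as the paper: the pointwise density comparison on $B(z,\lambda d(z))$ (your estimates $d(w,\partial B(z,d(z)))=d(z)-|w-z|\geq (1-\lambda)d(z)$ and $d(w,\partial G)\leq d(z)+|w-z|\leq(1+\lambda)d(z)$ are exactly the ones used there), followed by integration along a $k_G$-geodesic that is assumed to stay inside $B(z,\lambda d(z))$. The gap is in how you justify that assumption. Confinement of the $k_G$-geodesic to $B(z,\lambda d(z))$ is a nontrivial theorem, which the paper imports from \cite[Theorem 2.2]{m}; neither of your justifications establishes it. Euclidean convexity of the subball is irrelevant: quasihyperbolic geodesics are not segments, and a Euclidean-convex subset of $G$ containing $x$ and $y$ need not contain the $k_G$-geodesic between them. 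For instance, in $G=\mathbb{H}^2$, where $k_G$ is the hyperbolic metric, the geodesic joining $(-1,\varepsilon)$ to $(1,\varepsilon)$ is a circular arc rising to height $\sqrt{1+\varepsilon^2}$, so it escapes the convex set $(-1,1)\times(\varepsilon/2,2\varepsilon)$; what makes confinement true for the special sets $B(z,\lambda d(z))$ is their position well inside $G$, not convexity. Your other justification --- ``the fact that $k_G$-geodesics between points of a ball well inside $G$ stay within that ball'' --- is a restatement of the very claim that needs proof, so the argument is circular; and the variant where you take the infimum only over arcs inside the subball begs the same question, as you yourself observe.

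The step can in fact be repaired without citing Martin, and precisely by the radial-projection idea you floated and then dropped, provided you project with respect to the majorant density $1/(d(z)+|u-z|)$ rather than $1/d(u,\partial G)$. Since $d(u,\partial G)\leq d(z)+|u-z|$ for every $u\in G$, any curve $\gamma$ joining $x,y$ in $G$ satisfies $\int_\gamma |du|/d(u,\partial G)\geq \int_\gamma |du|/(d(z)+|u-z|)$. If $P$ denotes radial projection onto $\overline{B}(z,\lambda d(z))$, then for $|u-z|=\rho\geq\lambda d(z)$ one has $|dP(u)|\leq (\lambda d(z)/\rho)\,|du|$, and the pointwise inequality $\frac{|dP(u)|}{(1+\lambda)d(z)}\leq\frac{|du|}{d(z)+\rho}$ reduces to $\lambda d(z)\leq\rho$, which holds; hence $\int_\gamma |du|/(d(z)+|u-z|)\geq \int_{P\gamma}|dv|/(d(z)+|v-z|)\geq \ell(P\gamma)/((1+\lambda)d(z))\geq |x-y|/((1+\lambda)d(z))$, so $k_G(x,y)\geq |x-y|/((1+\lambda)d(z))$. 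On the other hand, integrating along the segment $[x,y]$ --- here convexity of the ball is legitimately used --- gives $k_{B(z,d(z))}(x,y)\leq |x-y|/((1-\lambda)d(z))$. These two bounds combine to give exactly the lemma. So either cite \cite[Theorem 2.2]{m} for the confinement, as the paper does, or replace that step by an argument of this kind; as written, your proof is circular at its crucial point.
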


\begin{proof}
By the definition $k_G(x,y)=\int_{J_G}\frac{|du|}{d(u,\partial{G})}$, where $J_G$ is the geodesic segment of the metric $k_G$ joining $x$ and $y$ in $G\,.$ Because $x, y\in B(z,\lambda d(z))$ it follows from \cite[Theorem 2.2]{m} that
$J_G\subset B(z,\lambda d(z))$ and hence for all $u\in J_G$, $d(u,\partial{G})\leq (1+\lambda)d(z)$ and further
\allowdisplaybreaks\begin{align*}
k_{B(z, d(z))}(x,y) &\leq \int_{J_G}\frac{|du|}{d(u,S^{n-1}(z, d(z)))}=\int_{J_G}\frac{|du|}{d(z)-|u-z|}\\
&\leq \int_{J_G}\frac{|du|}{(1-\lambda)d(z)}
\leq \int_{J_G}\frac{|du|}{\frac{1-\lambda}{1+\lambda}d(u,\partial{G})}
= \frac{1+\lambda}{1-\lambda}k_{G}(x,y)\,.
\end{align*}

\end{proof}

\begin{thm}        
Let $G\subset\Rn$,  $x, y\in G$, and  $\lambda\in (0,1)\,.$ Then $$s_{G}(x,y)\leq c\th\left(\frac{1+\lambda}{1-\lambda}k_G(x,y)\right)\, ; \quad c=\frac{1}{\th\left(\frac{1+\lambda}{1-\lambda}\log(1+\lambda)\right)}\,.$$
\end{thm}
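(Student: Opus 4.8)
\section*{Proof proposal}

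The plan is to \emph{localize} at the inscribed ball $B:=B(x,d(x))\subset G$ and then to split into two regimes according to the size of $|x-y|$ relative to $d(x)$. Throughout I assume, without loss of generality, that $d(x)=\min\{d(x),d(y)\}$.

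First I would record the chain comparing $s_G$ with the quasihyperbolic metric of the inscribed ball. Since $B\subset G$, the triangular ratio metric is monotone under this inclusion, i.e.\ $s_G(x,y)\le s_B(x,y)$: any $z\in\partial G$ satisfies $z\notin G\supset B$, so projecting $z$ radially from the center $x$ onto $\partial B$ gives a point $z'\in\partial B$ with $|x-z'|+|z'-y|\le |x-z|+|z-y|$ (the radial segment splits $|x-z|$ and the triangle inequality controls $|z'-y|$); hence the supremum defining $s_B$ dominates that defining $s_G$. Because $s_B,\rho_B$ and $k_B$ are all invariant under similarities, Lemma \ref{lem2.12} transfers to $B$ and gives $s_B(x,y)\le \th(\rho_B(x,y)/2)$, while \eqref{rok} rescaled to $B$ gives $\rho_B\le 2k_B$. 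Combining these yields $s_G(x,y)\le \th(k_B(x,y))$.

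In the first regime $|x-y|<\lambda d(x)$ we have $y\in B(x,\lambda d(x))$, so Lemma \ref{kz} with center $z=x$ gives $k_B(x,y)\le \tfrac{1+\lambda}{1-\lambda}k_G(x,y)$. Since $\th$ is increasing this produces $s_G(x,y)\le \th\!\big(\tfrac{1+\lambda}{1-\lambda}k_G(x,y)\big)$, and because $c\ge 1$ (indeed $\th$ of a positive number lies in $(0,1)$, so $c>1$) the claimed bound $s_G(x,y)\le c\,\th\!\big(\tfrac{1+\lambda}{1-\lambda}k_G(x,y)\big)$ follows at once. In the complementary regime $|x-y|\ge \lambda d(x)$ the definition of $j_G$ gives $j_G(x,y)\ge\log(1+\lambda)$, whence by \eqref{jk} $k_G(x,y)\ge\log(1+\lambda)$. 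Then $\tfrac{1+\lambda}{1-\lambda}k_G(x,y)\ge \tfrac{1+\lambda}{1-\lambda}\log(1+\lambda)$, so $\th\!\big(\tfrac{1+\lambda}{1-\lambda}k_G(x,y)\big)\ge 1/c$, and the right-hand side is $\ge 1\ge s_G(x,y)$, completing this case.

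The genuinely delicate point is not any single estimate but the bookkeeping behind the case split: one must notice that the threshold $|x-y|=\lambda d(x)$ is exactly the condition making Lemma \ref{kz} applicable in the first regime, and that the constant $c=1/\th\!\big(\tfrac{1+\lambda}{1-\lambda}\log(1+\lambda)\big)$ is calibrated precisely so that the trivial bound $s_G\le 1$ closes the second regime. The similarity-invariance transfer of Lemma \ref{lem2.12} and of \eqref{rok} to an arbitrary inscribed ball, and the elementary domain monotonicity $s_G\le s_B$, should be stated with care but are routine.
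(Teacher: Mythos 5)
Your proposal is correct and follows essentially the same route as the paper's proof: localization to an inscribed ball via domain monotonicity of $s$, the transfer of Lemma \ref{lem2.12} by similarity invariance, Lemma \ref{kz} in the near case, and in the far case the calibration of $c$ via $k_G\geq j_G\geq\log(1+\lambda)$ together with $s_G\leq 1$. The only cosmetic differences are that you pass from $\rho_B$ to $k_B$ directly via \eqref{rok} where the paper goes through $j_B$ using \eqref{jrho}, and that you fix the ball center at $x$ from the outset, which is exactly what the paper's Case 2 reduces to.
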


\begin{proof}
We divide the proof into two cases:

Case1: $x, y\in B(z,\lambda d(z))$ for some $z\in G\,.$

By domain monotonicity, Lemma \ref{lem2.12}, \eqref{jrho}, \cite[3.4]{vu} and Lemma \ref{kz}
\allowdisplaybreaks\begin{align*}
s_G(x,y)&\leq s_{B(z, d(z))}(x,y) \leq \th\left(\frac{\rho_{B(z, d(z))}(x,y)}{2}\right)\\
&\leq   \th\left(j_{B(z, d(z))}(x,y)\right)
\leq  \th\left(k_{B(z, d(z))}(x,y)\right)
\leq  \th\left(\frac{1+\lambda}{1-\lambda}k_{G}(x,y)\right).
\end{align*}

Case 2: Case 1 is not true. Then choosing $z=x$ we see that $y\not\in B(x,\lambda d(x))$ and hence by \eqref{jk}, $k_G(x,y)\geq \log(1+\lambda)$ because $|x-y|\ge \lambda \min \{d(x),d(y)\}$ and
\[
s_G(x,y)\leq c \th\left(\frac{1+\lambda}{1-\lambda}\log(1+\lambda)\right)\leq c\th\left(\frac{1+\lambda}{1-\lambda}k_G(x,y)\right)
\]
holds if $c= \frac{1}{\th\left(\frac{1+\lambda}{1-\lambda}\log(1+\lambda)\right)}\,.$
\end{proof}

\begin{rem}
A uniform domain $G\subset\Rn$ is a domain with the following comparison property between the quasihyperbolic metric and the distance ratio metric: there exists a constant $C>1$ such that, for all $x,y\in G$,
$$j_G(x,y)\leq k_G(x,y) \leq C j_G(x,y)\,.$$
Recall that the lower bound holds for all domains $G$ by \eqref{jk}.
In \cite[Theorem 2]{go} a similar characterization of uniform domains was given but with the expression $a j_G(x,y)+b$ on the right hand side. It was pointed out in \cite[2.50 (2)]{vu0} that the pair of constants $(a,b)$ can be replaced by $(c,0)$ where $c$ only depends on $(a,b)\,.$
Hence, this comparison property and the above results yield numerous new inequalities between the quasihyperbolic metric and the triangular ratio metric or the visual angle metric in uniform domains. The class of uniform domains is very wide: for instance quasidisks in $\mathbb{R}^2$ are
such domains  \cite{gh}.

\end{rem}

\section{Comparison results between triangular ratio metric and visual angle metric}\label{section 3}

We introduce in this section two conditions on domains $G$ for which $s_G$ and $v_G$ are comparable.
 The first condition applies to domains $G$ which satisfy that $\partial{G}$ is "locally uniformly nonlinear", see Theorem \ref{6.1},  whereas the second condition applies to domains satisfying "exterior ball condition".

Very recently, after the submission of this paper we found another proof of Theorem \ref{3.1}. See \cite[Lemma 2.11]{hvw}.

\medskip
\begin{thm}\label{3.1}
If $G\subset\Rn$ is a domain, then for all $x, y\in G\,,$
$$s_G(x,y)\geq \sin{\frac{v_G(x,y)}{2}}\,.$$
\end{thm}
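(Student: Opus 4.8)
The plan is to prove the inequality pointwise, for each individual boundary point $z$, and then pass to the supremum. Fix $x,y\in G$ and an arbitrary $z\in\partial G$, and look at the triangle with vertices $x,z,y$. Write $a=|y-z|$, $b=|x-z|$, $c=|x-y|$ for its side lengths and let $\theta=\measuredangle(x,z,y)\in[0,\pi]$ be the angle at the vertex $z$. First I would record the law of cosines $c^2=a^2+b^2-2ab\cos\theta$ and rewrite it, using $a^2+b^2=(a+b)^2-2ab$ together with the half-angle identity $1+\cos\theta=2\cos^2(\theta/2)$, in the form
\[
\frac{c^2}{(a+b)^2}=1-\frac{4ab\cos^2(\theta/2)}{(a+b)^2}\,.
\]

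The key step is then the elementary bound $4ab\le(a+b)^2$ (AM--GM), which gives $4ab/(a+b)^2\le1$ and hence
\[
\frac{c^2}{(a+b)^2}\ge 1-\cos^2\frac{\theta}{2}=\sin^2\frac{\theta}{2}\,.
\]
Since all quantities are nonnegative and $\theta/2\in[0,\pi/2]$, taking square roots produces the pointwise inequality
\[
\frac{|x-y|}{|x-z|+|z-y|}\ge\sin\frac{\measuredangle(x,z,y)}{2}\qquad\text{for every }z\in\partial G\,.
\]

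Finally I would take suprema over $z\in\partial G$. By the definition of the triangular ratio metric, $s_G(x,y)\ge|x-y|/(|x-z|+|z-y|)$, so the displayed pointwise inequality yields $s_G(x,y)\ge\sin(\measuredangle(x,z,y)/2)$ for every $z$; since the left-hand side does not depend on $z$, taking the supremum over $z$ on the right gives $s_G(x,y)\ge\sup_{z\in\partial G}\sin(\measuredangle(x,z,y)/2)$. Because the map $t\mapsto\sin(t/2)$ is continuous and increasing on $[0,\pi]$, one has
\[
\sup_{z\in\partial G}\sin\frac{\measuredangle(x,z,y)}{2}=\sin\Bigl(\frac12\sup_{z\in\partial G}\measuredangle(x,z,y)\Bigr)=\sin\frac{v_G(x,y)}{2}\,,
\]
which is exactly the asserted bound.

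I do not expect a serious obstacle here; the proof is essentially a one-line trigonometric estimate followed by a supremum. The only points that require genuine care are the correct application of the half-angle identity (so that $\cos^2(\theta/2)$ appears with the right sign and the AM--GM bound pushes the ratio \emph{down}, giving a lower bound for $c/(a+b)$), and the interchange of $\sin(\cdot/2)$ with the supremum, which is justified by the monotonicity and continuity of $\sin$ on $[0,\pi/2]$. It is also reassuring that the pointwise inequality respects the ranges of both metrics, since $\theta\in[0,\pi]$ forces $\sin(\theta/2)\in[0,1]$, consistent with $s_G\in[0,1]$.
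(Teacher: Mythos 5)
Your proof is correct, and it takes a genuinely different (and in fact more elementary) route than the paper. The paper's proof picks a boundary point $w_0$ at which the supremum defining $v_G(x,y)$ is attained, invokes the envelope $E$ of the pair $(x,y)$ from the visual-angle-metric machinery of [KLVW], and then maximizes $|x-w|+|w-y|$ over $w\in\partial E$ by a calculus computation on the circular arcs of radius $R=|x-y|/(2\sin v_G(x,y))$, obtaining the maximum $4R\cos(v_G(x,y)/2)$ and hence the bound. You instead prove the \emph{pointwise} triangle inequality
\[
\frac{|x-y|}{|x-z|+|z-y|}\;\ge\;\sin\frac{\measuredangle(x,z,y)}{2}
\qquad\text{for every } z\in\partial G
\]
via the law of cosines, the half-angle identity and AM--GM (this is equivalent to Mollweide's formula $\frac{c}{a+b}=\frac{\sin(C/2)}{\cos((A-B)/2)}$ together with $\cos((A-B)/2)\le 1$), and then pass to suprema on both sides using monotonicity and continuity of $t\mapsto\sin(t/2)$ on $[0,\pi]$. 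Your argument buys two things: it needs no envelope construction at all, and it does not require the supremum defining $v_G(x,y)$ to be attained at some boundary point, an assumption the paper's proof makes implicitly in its first line. What the paper's approach buys is the explicit extremal picture --- the envelope and the worst-case position of $w$ --- which is reused elsewhere in the paper (e.g.\ in the proof of Theorem \ref{thm3.4}), and which shows how close the inequality is to being sharp for a given angle. Both proofs are valid in all dimensions $n\ge 2$, since the three points $x,y,z$ always span a plane.
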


\begin{proof}

Let $w_0\in\partial{G}$ be a point such that $v_G(x,y)=\measuredangle (x,w_0,y)\,.$ Let $E$ be the envelope of the pair $(x,y)$ which defines $v_G(x,y)$ (see \cite[2.9]{klvw}).
Clearly,
\allowdisplaybreaks\begin{align*}
s_G(x,y)&\geq \frac{|x-y|}{|x-w_0|+|w_0-y|}\\
        &\geq \inf_{w\in\partial{E}}\frac{|x-y|}{|x-w|+|w-y|}\,.
\end{align*}
We need to get the maximum of $|x-w|+|w-y|$ when $w\in\partial{E}\,.$ It is easy to check that the radius of the boundary circular arcs of the envelope $E$ is $R=\frac{|x-y|}{2\sin{v_G(x,y)}}\,.$ For $w\in\partial{E}\,,$ let $\theta$ be the central angle formed by the points $y\,,$ $w$ and the center. We see that
\allowdisplaybreaks\begin{align*}
|x-w|+|w-y| &= 2R \sin{\frac{\theta}{2}}+2R \cos\left(v_G(x,y)-\frac{\pi-\theta}{2}\right)\\
&= 2R \sin{\frac{\theta}{2}}+2R \sin\left(v_G(x,y)+\frac{\theta}{2}\right)\\
&\equiv  f(\theta)\,,
\end{align*}
and
$$
\max (f(\theta)) = f(\pi-v_G(x,y))=4R\cos{\frac{v_G(x,y)}{2}}\,.
$$
Therefore,
\allowdisplaybreaks\begin{align*}
s_G(x,y) &\geq  \frac{|x-y|}{4R\cos{\frac{v_G(x,y)}{2}}}\\
&= \frac{|x-y|}{\frac{4|x-y|}{2\sin{v_G(x,y)}}\cdot\cos{\frac{v_G(x,y)}{2}}}\\
&= \sin{\frac{v_G(x,y)}{2}}\,.\qedhere
\end{align*}
\end{proof}

\begin{figure}[h]

\begin{center}
\includegraphics[width=7.5cm]{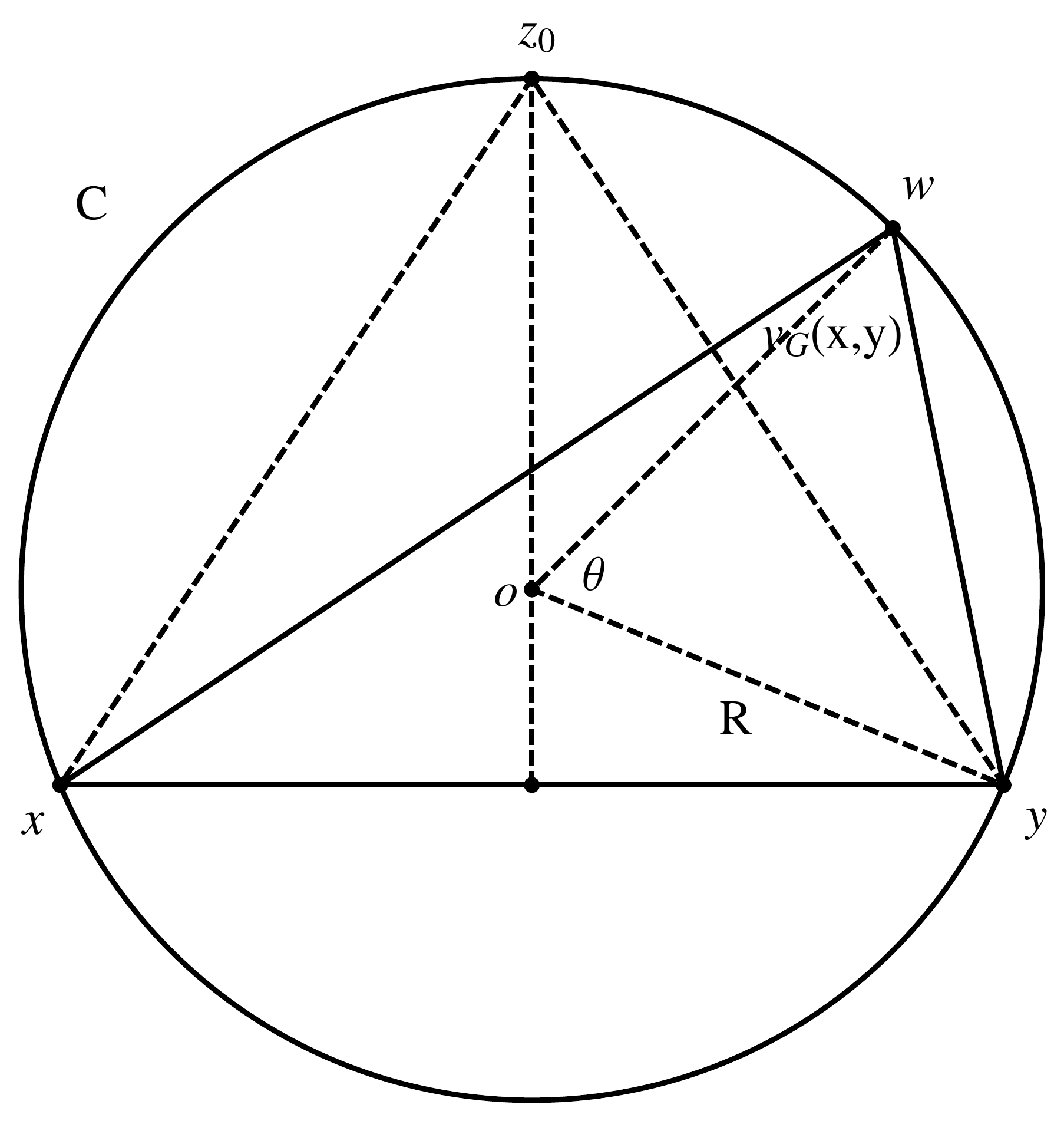}
\caption{Proof of Theorem \ref{3.1}}\label{fig3.1}
\end{center}
\end{figure}

\begin{figure}[h]

\begin{center}
\includegraphics[width=7.5cm]{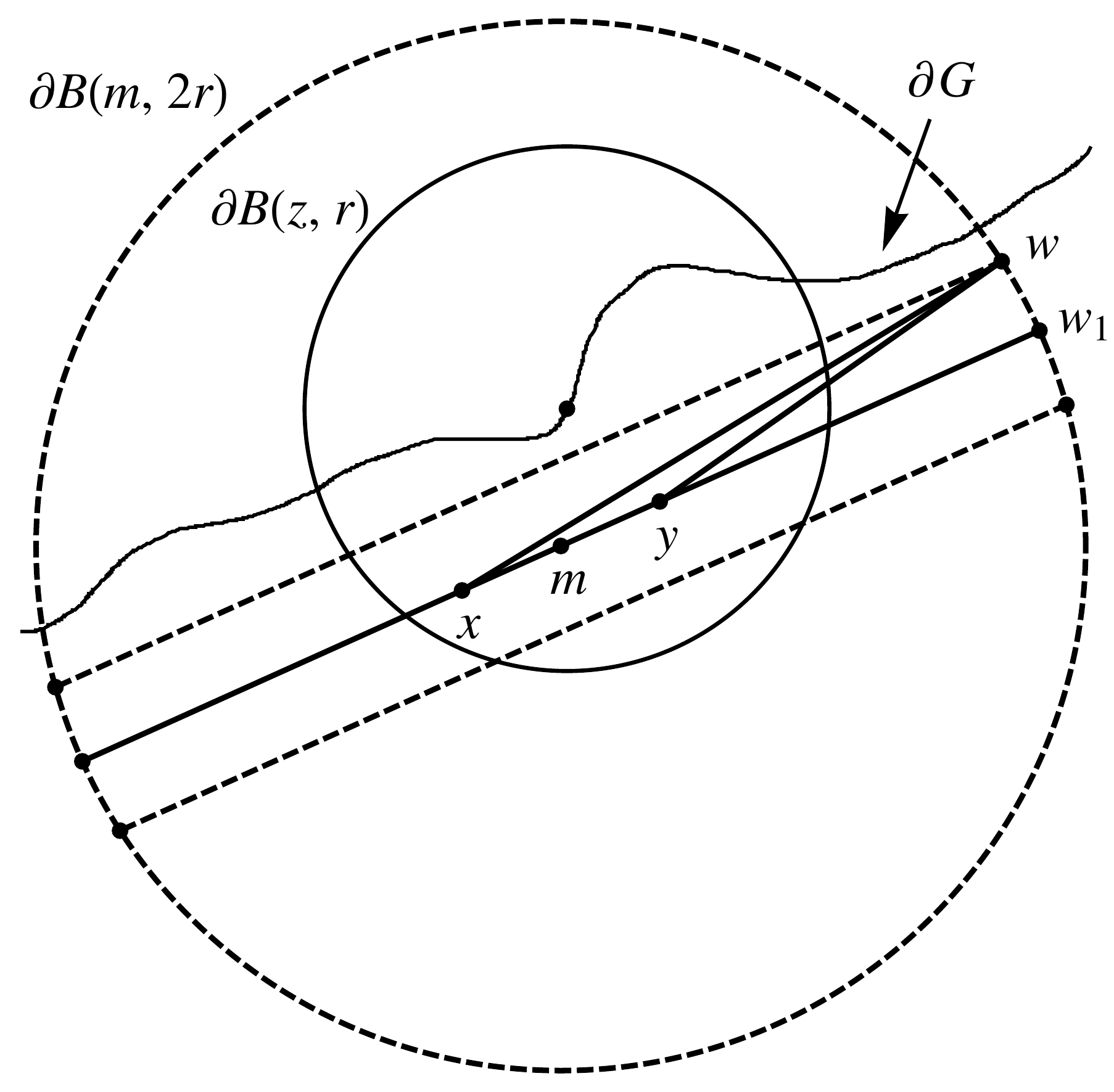}
\caption{Proof of Theorem \ref{6.1}}\label{fig2}
\end{center}
\end{figure}

In general it is not true that $v_G$ has a lower bound in terms of $s_G\,.$ For instance, this fails for $G=\B\setminus\{0\}\,,$ \cite[Remark 2.18]{hvw}.
The nonlinearity condition in the next theorem is similar to the thickness condition in \cite{vvw}, and it ensures a lower bound for $v_G$ in terms of $s_G\,.$ For the case $n=2$ an example of a domain satisfying the nonlinearity condition is the snowflake domain.

\begin{defn}
Suppose that $G\subset\R\,$ is a domain.
 We say that $\partial{G}\,,$   satisfies the nonlinearity condition, if there exists $\delta\in (0,1)\,,$ such that for every $z\in\partial{G}$ and for every $r\in (0, d(G))$ and for every line $L$ with $L\cap B(z,r)\neq \emptyset\,,$
there exists $w\in B(z,r)\cap\partial{G}\setminus \bigcup_{y\in L}B(y,\delta r)\,.$ 

\end{defn}

\begin{thm}\label{6.1}      
Let $G\subset\R$ be a domain such that $\partial{G}$ satisfies the nonlinearity condition.
If $x, y\in G$ and $s_G(x,y)<1$ then
\[
v_G(x,y)>\arctan\left(\frac{\delta}{6} s_G(x,y)\right)\,.
\]
\end{thm}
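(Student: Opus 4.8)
The plan is to establish a lower bound for $v_G(x,y)$ by exploiting the nonlinearity condition to locate a boundary point $w$ that forms a definite visual angle at a suitable scale. Since $s_G(x,y) < 1$, there exists a point $z_0 \in \partial G$ nearly achieving the supremum in the definition of $s_G$, so that $|x-z_0| + |z_0 - y|$ is close to $|x-y|/s_G(x,y)$. First I would fix the line $L$ through $x$ and $y$ (or a line closely related to the segment $[x,y]$) and choose an appropriate radius $r$ comparable to $|x-y|$ so that $L \cap B(z,r) \neq \emptyset$ for a center $z$ chosen on or near $[x,y]$. The nonlinearity condition then furnishes a boundary point $w \in B(z,r) \cap \partial G$ whose distance from the line $L$ is at least $\delta r$. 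The geometric content is that this $w$, being bounded away from the line containing $x$ and $y$ but still within a controlled distance of both points, must subtend a visual angle $\measuredangle(x,w,y)$ that is bounded below in terms of $\delta$ and the ratio of the transversal distance $\delta r$ to the longitudinal distances $|x-w|, |w-y|$.

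The key estimate would be to convert the lower bound on $\operatorname{dist}(w, L) \geq \delta r$ into a lower bound on the angle $\measuredangle(x,w,y)$. I would drop the foot of the perpendicular from $w$ to $L$, call it $w'$, and use elementary trigonometry: the angle subtended at $w$ by the segment $[x,y]$ relates to the transversal height $h = |w - w'| \geq \delta r$ and the horizontal offsets of $x,y$ from $w'$. The cleanest route is to bound $\tan$ of half the angle (or of a related angle) from below by a quantity like $h / (\text{something} \cdot |x-y|)$, and then assemble the pieces. Since $v_G(x,y) \geq \measuredangle(x,w,y)$ by definition of the visual angle metric as a supremum over $\partial G$, any such lower bound on this particular angle transfers to $v_G$. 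The factor $1/6$ and the $\arctan$ structure in the conclusion strongly suggest that after choosing $r$ proportional to $|x-y|/s_G(x,y)$ and controlling the worst-case position of $w$ within $B(z,r)$, the transversal-to-longitudinal ratio comes out to roughly $(\delta/6)\, s_G(x,y)$, so I would aim to track constants carefully through the choice of $r$ and the triangle estimates.

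The main obstacle I anticipate is the bookkeeping in the geometric configuration: the nonlinearity condition only guarantees that $w$ avoids a $\delta r$-neighborhood of the \emph{line} $L$, so I must ensure that $w$ is simultaneously close enough to both $x$ and $y$ (to keep the denominators $|x-w|+|w-y|$ controlled) and far enough from $L$ (to force a nontrivial angle). This requires a judicious choice of the center $z$ and radius $r$ so that $B(z,r)$ contains a region where any boundary point at transversal distance $\geq \delta r$ from $L$ lies at a bounded angular position relative to both $x$ and $y$. The constant $6$ presumably emerges from the need to simultaneously dominate the distances $|x-z_0|, |z_0-y|$ (governed by $s_G$) and to handle the worst-case longitudinal placement of $w$ inside the ball. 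A subtlety is that the extremal boundary point $z_0$ for $s_G$ and the nonlinearity witness $w$ are different points, so I would use $z_0$ to calibrate the scale $r$ via $s_G(x,y)$ and then invoke nonlinearity at that scale to produce $w$; reconciling these two roles is where the argument must be set up most carefully. Once the configuration is fixed, the trigonometric lower bound for the angle should follow from a direct computation with the half-angle formula, and the strict inequality in the conclusion reflects that $v_G$ is a supremum dominating the single angle $\measuredangle(x,w,y)$.
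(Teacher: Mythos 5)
Your overall strategy is the same as the paper's: invoke the nonlinearity condition at a scale adapted to $x,y$ to produce a boundary point $w$ with $\operatorname{dist}(w,L)\ge\delta r$, bound $\measuredangle(x,w,y)$ below trigonometrically, and convert the scale into $s_G(x,y)$. But as written your plan has two genuine gaps. First, the nonlinearity condition can only be invoked at a center $z\in\partial G$, so ``a center $z$ chosen on or near $[x,y]$'' is inadmissible (when $s_G(x,y)<1$ the segment lies in $G$), and your first-paragraph choice ``$r$ comparable to $|x-y|$'' fails outright in the regime $d(x)\gg|x-y|$: every boundary point is at distance $\ge d(x)\gg r$ from $x$ and $y$, so an admissible center (one with $L\cap B(z,r)\neq\emptyset$) either does not exist, or lies near the infinite line $L$ but so far from the pair that any witness $w\in B(z,r)$ subtends an angle of order $|x-y|\,r/d(x)^2\asymp s_G^2$, far smaller than $\delta s_G/6$. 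The scale must be of order $d(x)+|x-y|$; the paper centers at $z_0\in\partial G$ with $|x-z_0|=d(x)$ and takes $r=d(x)+|x-y|$, so that $x\in L\cap B(z_0,r)$ automatically and the witness stays in $B(m,2r)$, $m=(x+y)/2$. Your fallback $r\propto|x-y|/s_G(x,y)$ is of the right order (it lies within a factor $2$ of $d(x)+|x-y|$), but note that the $s_G$-extremal boundary point is then not needed at all: in the paper $s_G$ enters only at the very end through the elementary bound $s_G(x,y)\le|x-y|/(2d(x))$ (valid since $d(x)\le d(y)$), which dissolves the ``reconciliation'' difficulty you flag.

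Second, the quantitative core is precisely what you defer, and it is not routine bookkeeping. The paper identifies the worst-case position of $w$ (on the circle $S(m,2r)$ at height $\delta r$) and computes $\tan\alpha=\delta r|x-y|/\bigl(4r^2-|x-y|^2/4\bigr)$; it then substitutes $r=d(x)+|x-y|$, sets $t=|x-y|/d(x)$, and proves that $f(t)=(1+t)t/\bigl(4(1+t)^2-(t/2)^2\bigr)$ is increasing, that $f(t)/t$ is decreasing, hence $f(t)\ge t/12$ on $(0,2]$, and combines this with $t\ge 2s_G(x,y)$ to land exactly on $\arctan(\delta s_G/6)$. Nothing in your sketch produces this (or any explicit) constant, and the choice of containing ball genuinely matters: natural variants of your setup give constants on either side of $1/6$. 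A smaller slip: the strict inequality does not ``reflect that $v_G$ is a supremum dominating the single angle'' --- a supremum only gives $\ge$; in the paper strictness comes from $f(t)/t\ge 3/35>1/12$ on $(0,2]$. So the geometry of your plan is right, but the literal first-paragraph setup would fail, and the corrected version still omits the computation that constitutes the theorem's content.
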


\begin{proof}
Fix $x, y\in G\,.$ We may assume that $d(x)\leq d(y)\,.$  Choose $z_0\in\partial{G}$ such that $|x-z_0|=d(x)\,.$ Let $r=d(x)+|x-y|\,.$ Then $B(z_0,r)\subset B(m,t)\,,$ $m=(x+y)/2$ for $t=2r\,.$ By the nonlinearity condition as we see in Figure \ref{fig2},
$v_G(x,y)\geq \measuredangle (x,w,y)=\alpha\,,$ $w=m+te^{i\theta}(\frac{y-x}{|y-x|})\,,$ $\theta=\arcsin{\frac{\delta r}{t}}\,.$ Writing $w_1=m+t\frac{y-x}{|y-x|}\,,$ $\beta=\measuredangle (w,y,w_1)$ and $\gamma=\measuredangle (w,x,w_1)$
we see that $\tan{\beta}=\frac{\delta r}{\sqrt{4r^2-{\delta}^2 r^2}-|x-y|/2}$ and $\tan{\gamma}=\frac{\delta r}{\sqrt{4r^2-{\delta}^2 r^2}+|x-y|/2}$ and hence

$$\tan{\alpha}=\tan\left(\beta-\gamma\right)=\frac{\delta r |x-y|}{4r^2-|x-y|^2/4}\,.$$

Therefore
\allowdisplaybreaks\begin{align*}
v_G(x,y)\geq \alpha &= \arctan{\frac{\delta r |x-y|}{4r^2-|x-y|^2/4}}\\
&= \arctan{\frac{\delta (d(x)+|x-y|) |x-y|}{4(d(x)+|x-y|)^2-|x-y|^2/4}}\\
&= \arctan{\frac{\delta (1+|x-y|/d(x)) |x-y|/d(x)}{4(1+|x-y|/d(x))^2-(|x-y|/2d(x))^2}}\,.
\end{align*}
Then $s_G(x,y)\leq \frac{|x-y|}{2d(x)}\,.$ A simple calculation shows that the function $f(t)=\frac{(1+t)t}{4(1+t)^2-(t/2)^2}$ is increasing for $t>0\,,$ since $f'(t)=2\left(\frac{1}{(4+3t)^2}+\frac{1}{(4+5t)^2}\right)>0\,.$ On the other hand, $g(t)=f(t)/t$ is decreasing for $t>0\,.$ Hence, for $0<t\leq2\,,$ $g(t)\geq g(2)=\frac{3}{35}>\frac{1}{12}$ and $f(t)\geq \frac{1}{12}t\,.$

Therefore
\allowdisplaybreaks\begin{align*}
\arctan{\frac{\delta (1+|x-y|/d(x)) |x-y|/d(x)}{4(1+|x-y|/d(x))^2-(|x-y|/2d(x))^2}} &= \arctan(f(|x-y|/d(x))\delta)\\
\geq \arctan(f(2 s_G(x,y))\delta) 
&\geq \arctan\left(\frac{\delta }{6} s_G(x,y)\right)
\end{align*}
and the proof is complete.
\end{proof}

\begin{lem}\label{lem3.3}
Let $G\subset\mathbb{R}^n$ be a proper subdomain of $\mathbb{R}^n\,,$ $x\in G$ and $y\in B^n(x,d(x))\,.$ Then
$$\sin(v_G(x,y))\leq \sup_{w\in\partial G}\frac{|x-y|}{|x-w|}=\frac{|x-y|}{d(x)}\,.$$
\end{lem}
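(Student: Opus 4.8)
The plan is to reduce the bound on $\sin(v_G(x,y))$ to a single-triangle estimate via the law of sines, and then pass to the supremum that defines $v_G$. First I would dispose of the equality, which is immediate: since $d(x)=d(x,\partial G)=\inf_{w\in\partial G}|x-w|$ and $|x-y|$ is a fixed nonnegative number, $\sup_{w\in\partial G}\frac{|x-y|}{|x-w|}=\frac{|x-y|}{\inf_{w\in\partial G}|x-w|}=\frac{|x-y|}{d(x)}$. I would also note that the hypothesis $y\in B^n(x,d(x))$ is exactly what makes the claim have content: if instead $|x-y|\ge d(x)$, then the right-hand side is at least $1\ge \sin(v_G(x,y))$ and nothing is to be proved. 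So the genuine case is $|x-y|<d(x)$.

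Next I would estimate each individual angle. Fix $w\in\partial G$. If $x,w,y$ are collinear, then $\measuredangle(x,w,y)\in\{0,\pi\}$ and $\sin\measuredangle(x,w,y)=0$, so the bound is trivial. Otherwise $x,w,y$ form a nondegenerate triangle, and applying the law of sines to the angle at vertex $w$ (opposite side $[x,y]$) and the angle at vertex $y$ (opposite side $[x,w]$) gives
\[
\frac{|x-y|}{\sin\measuredangle(x,w,y)}=\frac{|x-w|}{\sin\measuredangle(w,y,x)}\,,
\]
whence
\[
\sin\measuredangle(x,w,y)=\frac{|x-y|}{|x-w|}\,\sin\measuredangle(w,y,x)\le\frac{|x-y|}{|x-w|}\le\frac{|x-y|}{d(x)}\,,
\]
the final step using $|x-w|\ge d(x)$ for $w\in\partial G$.

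Finally I would take the supremum. Since $v_G(x,y)=\sup_{w\in\partial G}\measuredangle(x,w,y)$, I would pick a sequence $(w_k)\subset\partial G$ with $\measuredangle(x,w_k,y)\to v_G(x,y)$. By continuity of $\sin$ one has $\sin\measuredangle(x,w_k,y)\to\sin(v_G(x,y))$, and since every term is bounded above by $|x-y|/d(x)$ by the previous step, the limit satisfies $\sin(v_G(x,y))\le |x-y|/d(x)$, as required.

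I expect the only genuinely delicate point to be this last interchange of $\sin$ with the supremum: because $\sin$ is not monotone on $[0,\pi]$, one cannot naively write $\sin(\sup_w\measuredangle)=\sup_w\sin(\measuredangle)$, and the continuity argument above is what circumvents the difficulty. An alternative route, which also explains the role of the hypothesis, is to first observe that $y\in B^n(x,d(x))$ forces $v_G(x,y)\le \pi/2$: the Thales sphere on diameter $[x,y]$ lies in $\overline{B^n(x,|x-y|)}\subset B^n(x,d(x))\subset G$, so every boundary point $w$ lies strictly outside it and hence subtends $[x,y]$ under an acute angle; on $[0,\pi/2]$ the function $\sin$ is increasing, which legitimizes the interchange directly. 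Either path completes the argument.
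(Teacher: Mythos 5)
Your proof is correct and follows essentially the same route as the paper: a per-boundary-point elementary estimate $\sin\measuredangle(x,w,y)\le |x-y|/|x-w|$ (the paper states this angle bound as ``elementary geometry,'' which your law-of-sines computation simply makes explicit), followed by taking the supremum over $w\in\partial G$. Your explicit handling of the $\sin$-versus-$\sup$ interchange (via continuity, or via the Thales-sphere observation that $v_G(x,y)\le\pi/2$ under the hypothesis $y\in B^n(x,d(x))$) fills in a point the paper leaves implicit, and is a welcome addition rather than a deviation.
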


\begin{proof}
Fix $x\in G$ and $y\in B^n(x,d(x))\,.$ For each $w\in\partial G$ we have by elementary geometry
$$\measuredangle(x-w,y-w)\leq\theta; \quad \sin\theta=\frac{|x-y|}{|x-w|}\,.$$
Taking supremum over all  $w\in\partial G$ we obtain
\[
\sin(v_G(x,y))\leq \sup_{w\in\partial G}\frac{|x-y|}{|x-w|}=\frac{|x-y|}{d(x)}\,.\qedhere
\]
\end{proof}

\begin{thm}\label{thm3.4}
Let $G$ be a proper subdomain of $\mathbb{R}^2\,.$ For $x,y\in G\,,$
$$s_G(x,y)\leq\frac{|x-y|/d(x)}{1+\cos(v_G(x,y))+\sqrt{(|x-y|/d(x))^2-\sin^2(v_G(x,y))}}\,.$$
\end{thm}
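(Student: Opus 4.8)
The plan is to turn the desired upper bound on $s_G$ into a uniform lower bound on the sum $|x-z|+|z-y|$ over boundary points. Since
\[
s_G(x,y)=\sup_{z\in\partial G}\frac{|x-y|}{|x-z|+|z-y|},
\]
it suffices to produce a constant $M>0$ with $|x-z|+|z-y|\ge M$ for every $z\in\partial G$, for then $s_G(x,y)\le |x-y|/M$. I normalise so that $d(x)\le d(y)$ (the inequality as written requires $d(x)$ to be the \emph{smaller} of the two boundary distances), set $d=d(x)$, $c=|x-y|$, $v=v_G(x,y)$, and for a fixed $z\in\partial G$ write $a=|x-z|$, $b=|z-y|$, $\phi=\measuredangle(x,z,y)$. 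The three available facts are $a\ge d$, $b\ge d(y)\ge d$, and $\phi\le v$. Feeding $\phi\le v$ into the law of cosines $c^2=a^2+b^2-2ab\cos\phi$ and using that cosine is decreasing on $[0,\pi]$ produces the $z$-free inequality $a^2+b^2-2ab\cos v\ge c^2$.

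The algebraic core is to convert this into a quadratic inequality for $p=a+b$. Since $a^2+b^2-2ab\cos v=p^2-2ab(1+\cos v)$, the inequality reads $p^2-2ab(1+\cos v)\ge c^2$. Both lower bounds $a\ge d$ and $b\ge d$ enter through the single clean observation $(a-d)(b-d)\ge 0$, that is $ab\ge d(a+b)-d^2=d(p-d)$; substituting this yields
\[
p^2-2d(1+\cos v)\,p+\bigl(2d^2(1+\cos v)-c^2\bigr)\ge 0.
\]
A short computation shows the discriminant of the associated quadratic collapses to $c^2-d^2\sin^2 v$, so its roots are $p_\pm=d(1+\cos v)\pm\sqrt{c^2-d^2\sin^2 v}$.

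It then remains to select the correct root. One always has $p=a+b\ge 2d$, and an elementary check gives $p_-\le 2d$ (with equality only when $x=y$); hence $p>p_-$, and the factored inequality $(p-p_-)(p-p_+)\ge 0$ forces $p\ge p_+$. Thus
\[
|x-z|+|z-y|\ \ge\ d(1+\cos v)+\sqrt{c^2-d^2\sin^2 v}
\]
for every $z\in\partial G$; taking the supremum over $z$ and dividing numerator and denominator by $d=d(x)$ reproduces exactly the asserted fraction. The radicand is nonnegative because $\sin v\le c/d(x)$: at a point $w\in\partial G$ with $\measuredangle(x,w,y)=v$, the law of sines in the triangle $xwy$ gives $\sin v=c\,\sin(\measuredangle(x,y,w))/|x-w|\le c/d(x)$, which is the general form of Lemma \ref{lem3.3}.

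The step I expect to be genuinely delicate is conceptual rather than computational. The two "obvious" constraints $a\ge d(x)$ and $\phi\le v$ are by themselves useless: a point $z$ on the ray from $x$ through $y$ just beyond $y$ satisfies both, yet makes $a+b$ arbitrarily close to $c$ and gives only the trivial bound $s_G\le 1$. The decisive ingredient is therefore the second distance estimate $b=|z-y|\ge d(y)\ge d(x)$, which is precisely why the normalisation $d(x)\le d(y)$ cannot be dropped, and the inequality $(a-d)(b-d)\ge 0$ is the device that feeds this bound into the computation. The only remaining care is in the root-selection step, where one must verify $p_-\le 2d$ so that the quadratic inequality delivers the useful branch $p\ge p_+$ rather than $p\le p_-$.
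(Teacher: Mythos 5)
Your proof is correct, and it takes a genuinely different route from the paper's. The paper argues geometrically: assuming $d(x)\le d(y)$, it first disposes of the case $\partial G\cap[x,y]\neq\emptyset$ (there $s_G(x,y)=1$, $v_G(x,y)=\pi$, and the bound holds with equality), and otherwise forms the auxiliary domain $D=B^2(x,d(x))\cup B^2(y,d(x))\cup E$, where $E$ is the envelope of \cite[2.9]{klvw} defining $v_G(x,y)$; it computes $s_D(x,y)$ exactly at a point $w_0\in S^1(x,d(x))\cap\partial E$ via the law of cosines, and finishes by domain monotonicity $s_G\le s_D$. Writing $v=v_G(x,y)$, you instead derive, for \emph{every} $z\in\partial G$, the uniform lower bound $|x-z|+|z-y|\ge d(x)(1+\cos v)+\sqrt{|x-y|^2-d(x)^2\sin^2 v}$ purely from the three constraints $|x-z|\ge d(x)$, $|z-y|\ge d(y)\ge d(x)$ and $\measuredangle(x,z,y)\le v$, combining the law of cosines with the linearization $(a-d)(b-d)\ge 0$ and a root-selection argument for the resulting quadratic in $a+b$; your discriminant computation and your branch selection ($p_-<2d\le p$ whenever $x\ne y$) both check out. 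Your route buys several things: it needs neither the envelope construction nor domain monotonicity of the $s$-metric; it needs no case distinction, since a boundary point on $[x,y]$ is absorbed into the same algebra ($1+\cos v=0$ there); and, because nothing in it is planar, it actually proves the statement for proper subdomains of $\mathbb{R}^n$ for all $n\ge 2$, whereas the paper's argument is written, and only works as written, in dimension two. What the paper's route buys in exchange is the explicit extremal configuration: the right-hand side is exactly $s_D(x,y)$ for the constructed subdomain $D$, so the bound cannot be improved by this method.

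Two minor observations. First, your insistence that $d(x)$ be the smaller of the two distances is essential, not cosmetic: if $d(x)>d(y)$ the written inequality can fail. In $\mathbb{H}^2$ take $x=(0,10)$, $y=(0,1)$; then $s_{\mathbb{H}^2}(x,y)=9/11\approx 0.82$, while the right-hand side computed with $d(x)=10$ is about $0.46$. So the paper's ``by symmetry we may assume'' conceals a genuine hypothesis, which both proofs share. Second, your law-of-sines justification of $\sin v\le |x-y|/d(x)$ tacitly assumes that the supremum defining $v_G(x,y)$ is attained; this is harmless and easily repaired by applying the same estimate to a maximizing sequence $z_k\in\partial G$ and letting $k\to\infty$, so it is a presentational point rather than a gap.
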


\begin{proof}
We may assume that $d(x)\leq d(y)\,.$ We first consider the case of $\partial G\cap [x,y]\neq\emptyset\,.$ It is clear in this case that $s_G(x,y)=1$ and $v_G(x,y)=\pi\,,$ and the desired inequality holds as an equality.
Next, we assume that  $\partial G\cap [x,y]=\emptyset\,.$ Let $E$ be the interior of the envelope which defines the visual angle metric between $x$ and $y\,.$ Then $D=B^2(x,d(x))\cup B^2(y,d(x)))\cup E$ is a subdomain of $G\,.$
Let $w_0\in\partial D\cap S^1(x,d(x))\cap\partial E\,.$ By use of the law of cosine in the triangle $\bigtriangleup xyw_0$ we get
$$|x-w_0|+|w_0-y|=(1+\cos(v_G(x,y)))d(x)+\sqrt{|x-y|^2-d(x)^2\sin^2(v_G(x,y))}.$$
A simple geometric observation gives
\allowdisplaybreaks\begin{align*}
s_D(x,y)&= \frac{|x-y|}{|x-w_0|+|w_0-y|}\\
&= \frac{|x-y|/d(x)}{1+\cos(v_G(x,y))+\sqrt{(|x-y|/d(x))^2-\sin^2(v_G(x,y))}}.
\end{align*}
Then the domain monotonicity of $s-$metric yields the desired inequality $s_G(x,y)\leq s_D(x,y)\,.$
\end{proof}

\begin{rem}\label{rem3.5}
(1) If $|x-y|/d(x) >1\,,$ then the square root in Theorem \ref{thm3.4} is clearly well-defined. In the case
$|x-y|/d(x)\leq 1$ it follows from Lemma \ref{lem3.3} that the square root is well-defined, too.

(2) The inequalities in Theorem \ref{thm3.4} are sharp in the following sense: If $v_G(x,y)=0\,,$ then $s_G(x,y)\leq {|x-y|}/{(|x-y|+2d(x))}$ which together with Lemma \ref{1.1}  actually gives  
\[
s_G(x,y)= {|x-y|}/{(|x-y|+2d(x))}\,.
\]
If $s_G(x,y)=1\,,$ then the inequality actually gives $v_G(x,y)=\pi\,.$
\end{rem}

\begin{defn}
Let $\delta\in (0,1/2)\,.$ We say that a domain $G\subset \Rn$ satisfies condition $H(\delta)$ if for every $z\in\partial{G}$ and all $r\in (0,d(G)/2)$ there exists $w\in B^n(z,r)\cap (\mathbb{R}^n\setminus G)$ such that $B^n(w,\delta r)\subset B^n(z,r)\cap (\mathbb{R}^n\setminus G)\,.$
\end{defn}
Note that the condition $H(\delta)$ excludes domains whose boundaries have zero angle cusps directed into the domain. For instance the domain $B^2\setminus{[0,1]}$ does not satisfy the condition $H(\delta)\,.$ A similar condition has been studied also in \cite{mv} and \cite{klv} and sometimes this condition is referred to as the porosity condition.
For instance, domains with smooth boundaries are in the class $H(\delta)\,.$

\begin{figure}[h]\label{fig3.2}
\begin{center}
     \includegraphics[width=8.5cm]{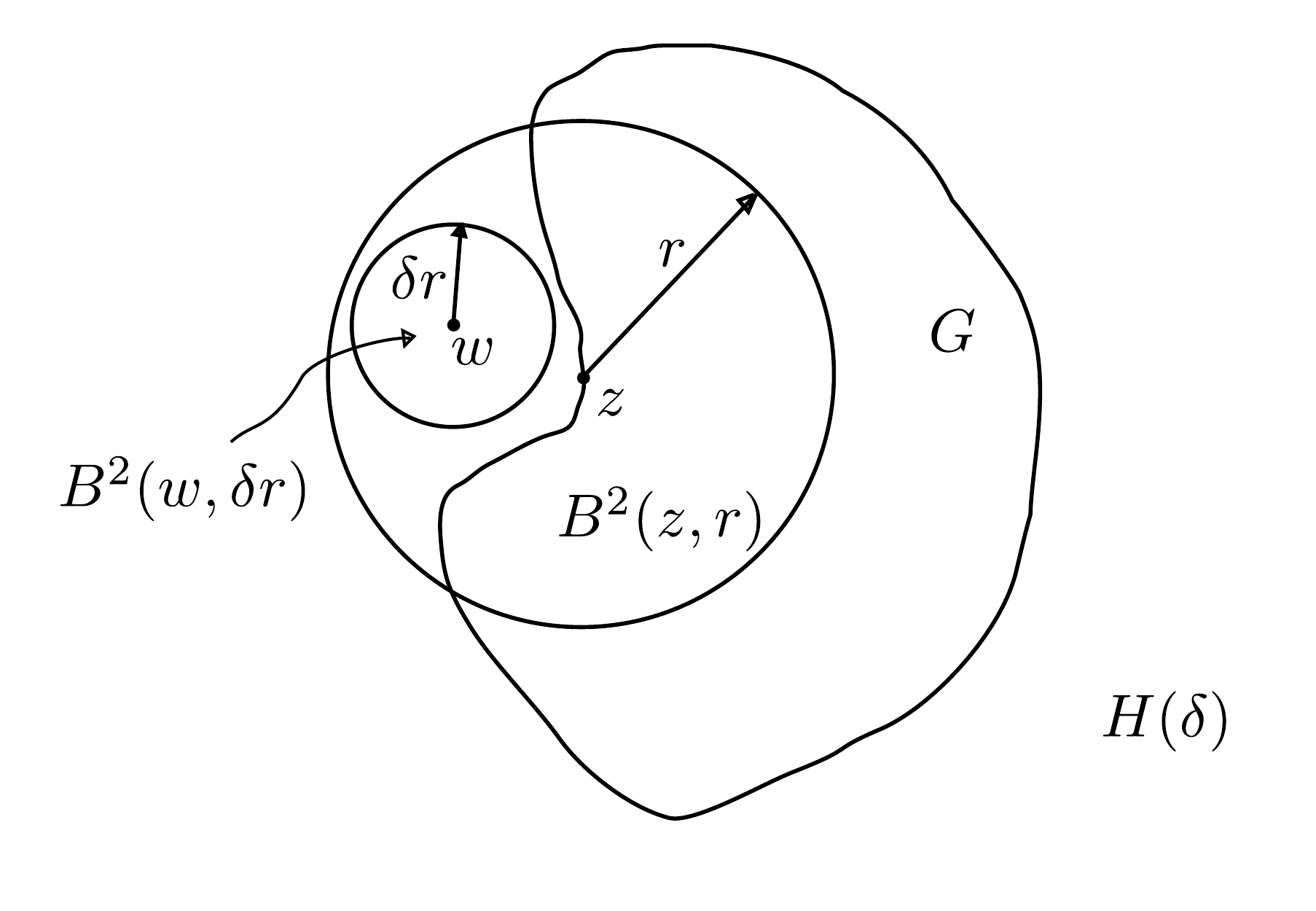}
\caption{ Condition $H(\delta)$ }
\end{center}
    \end{figure}

\begin{thm} \label{myHdthm}
Let $G\subset \R$ be a domain satisfying the condition $H(\delta)\,.$ Then for all $x, y\in G$ we have
\[
\sin{v_G(x,y)}\geq \frac{\delta}{2}j^*_G(x,y)\,.
\]
\end{thm}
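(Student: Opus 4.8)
The plan is to exhibit a single point $w\in\partial G$ for which the subtended angle $\measuredangle(x,w,y)$ satisfies $\sin\measuredangle(x,w,y)\ge\frac{\delta}{2}\,j^*_G(x,y)$, and then to use $v_G(x,y)\ge\measuredangle(x,w,y)$. By symmetry I assume $d(x)\le d(y)$ and put $d=d(x)$, $a=|x-y|$; by Lemma \ref{1.1} the inequality to be reached is $\sin\measuredangle(x,w,y)\ge\frac{\delta}{2}\cdot\frac{a}{a+2d}$. The governing length is $a+2d$: the argument of Proposition \ref{prop29} gives $d(G)\ge a+2d$, so the radius $r=(a+2d)/2$ is admissible in the condition $H(\delta)$, i.e.\ $r<d(G)/2$ (the case of equality being reached by an immaterial limiting adjustment, the unbounded case being trivial).

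First I would fix $z_0\in\partial G$ with $|x-z_0|=d$ and apply $H(\delta)$ at $z_0$ with radius $r$, obtaining an obstacle ball $B(w_\ast,\delta r)\subset B(z_0,r)\cap(\mathbb{R}^2\setminus G)$ of radius $\delta r=\delta(a+2d)/2$. Since $B(x,d)\subset G$ is disjoint from this obstacle, its centre satisfies $d+\delta r\le|x-w_\ast|\le d+(1-\delta)r\le a+2d$, and likewise $|y-w_\ast|\le a+|x-w_\ast|\le\tfrac32(a+2d)$; thus the obstacle is seen from both $x$ and $y$ under a definite aperture while remaining at distance $O(a+2d)$.

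The next step is to pass from the obstacle ball to an actual boundary point transverse to the line $L$ through $x,y$. As $G$ is a domain and $B(w_\ast,\delta r)$ lies in its complement, $\partial G$ separates $x$ from this ball; selecting $w\in\partial G$ inside the cone subtended at $x$ by the obstacle yields a boundary point with $|x-w|,|w-y|\lesssim a+2d$ and perpendicular distance $h$ to $L$ of order $\delta r$. Writing the angle at $w$ through the area of the triangle $xwy$, namely $\sin\measuredangle(x,w,y)=\dfrac{|x-y|\,h}{|x-w|\,|w-y|}$, the bounds $h\gtrsim\delta r=\delta(a+2d)/2$ and $|x-w|,|w-y|\lesssim a+2d$ then give, after the right choice of the selected point, $\sin\measuredangle(x,w,y)\ge\frac{\delta}{2}\cdot\frac{a}{a+2d}=\frac{\delta}{2}\,j^*_G(x,y)$. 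Since the offset $h$ is small compared with $|x-w|$, the constructed angle is itself small, in particular at most $\pi/2$.

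Combining with $v_G(x,y)\ge\measuredangle(x,w,y)$ finishes the proof. The hard part is the third paragraph: a single application of $H(\delta)$ at $z_0$ controls the \emph{size} of the complementary ball but not a priori its \emph{direction} relative to $[x,y]$, and the clean constant $\delta/2$ comes out only after choosing the reference boundary point and the scale $r$ so that the tangent half-angle at $x$, whose sine equals $\delta r/|x-w_\ast|$, converts exactly into the displayed estimate for $\sin\measuredangle(x,w,y)$. A secondary point is that $\sin$ decreases beyond $\pi/2$: because the constructed angle is small, one has $\sin\measuredangle(x,w,y)\le\sin v_G(x,y)$ whenever $v_G(x,y)\le\pi/2$, while the complementary regime $v_G(x,y)>\pi/2$ is exactly where $[x,y]$ runs close to $\partial G$ and must be handled by a separate direct estimate.
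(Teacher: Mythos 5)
Both of the points that you yourself flag as ``the hard part'' and ``a secondary point'' are genuine gaps, and they have different severity; your construction also differs structurally from the paper's, so let me address both.

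The selection step in your third paragraph rests on a false claim: no choice of $w\in\partial G$ ``inside the cone subtended at $x$ by the obstacle'' can be guaranteed to have perpendicular distance $h$ to $L$ of order $\delta r$. The condition $H(\delta)$ controls the size of the obstacle but not its direction, and if $w_\ast$ happens to lie on $L$ (nothing prevents this), the only boundary points you can certify lie on rays from $x$ that hit $\overline{B}(w_\ast,\delta r)$, i.e.\ rays making angle at most $\theta$ with $L$, where $\sin\theta=\delta r/|x-w_\ast|$; such a boundary point can sit at distance as small as $d$ from $x$, so the best guaranteed offset is $h\approx d\sin\theta\lesssim \delta d$, far below $\delta r\approx\delta a/2$ when $a\gg d$. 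The gap is reparable, but only by abandoning $h$ as the controlled quantity: since $\sin\measuredangle(x,w,y)=|x-y|\,h/(|x-w|\,|w-y|)=|x-y|\sin\psi/|w-y|$, where $\psi$ is the angle between the ray $xw$ and $L$, the factor $|x-w|$ cancels; one should pick the direction in the cap of directions hitting $\overline{B}(w_\ast,\delta r)$ that is closest to the perpendicular of $L$ (a short case analysis shows its angle to $L$ is at least $\theta$), let $w$ be the first exit point of that ray from $G$, and use $|x-w|\le|x-w_\ast|$, $|w-y|\le|x-w_\ast|+a$, $|x-w_\ast|\le d+(1-\delta)r$. With your scale $r=(a+2d)/2$ this gives $\sin\measuredangle(x,w,y)\ge 2\delta a(a+2d)/\bigl((a+4d)(3a+4d)\bigr)\ge\frac{\delta}{2}\cdot\frac{a}{a+2d}$, the last inequality being equivalent to $a^2\ge 0$, so your choice of scale does recover the constant $\delta/2$ exactly --- but none of this argument is in your text. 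The paper avoids the direction problem by a different device: it applies $H(\delta)$ at scale $r=d(x)$ (so it needs neither Proposition \ref{prop29} nor boundedness of $G$), passes to the auxiliary domain $G_1=\mathbb{R}^2\setminus\overline{B}(w_\ast,\delta r)$, uses monotonicity $v_G\ge v_{G_1}$, and computes $\sin v_{G_1}=|x-y|/(2\tilde R)$ from the circle through $x,y$ externally tangent to the obstacle, bounding $\tilde R$ via the power of the point $w_\ast$ in the collinear worst case --- precisely the configuration that defeats your cone selection.

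The second gap cannot be filled at all, because in the regime you set aside the stated inequality is false. The half-plane $\mathbb{H}^2$ satisfies $H(\delta)$ for every $\delta\in(0,1/2)$ (take $w=z-(r/2)e_2$), and for $x=(-1,\varepsilon)$, $y=(1,\varepsilon)$ one has $v_{\mathbb{H}^2}(x,y)=\pi-2\arctan\varepsilon$, hence $\sin v_{\mathbb{H}^2}(x,y)=2\varepsilon/(1+\varepsilon^2)\to 0$, while $j^*_{\mathbb{H}^2}(x,y)=1/(1+\varepsilon)\to 1$; so no ``separate direct estimate'' can yield $\sin v_G\ge\frac{\delta}{2}j^*_G$ when $v_G>\pi/2$. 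What your construction (repaired as above) and the paper's computation both actually prove is the angle bound $v_G(x,y)\ge\arcsin\bigl(\frac{\delta}{2}j^*_G(x,y)\bigr)$: if the constructed angle is at most $\pi/2$ this is immediate, and otherwise it holds trivially because $v_G>\pi/2>\arcsin\bigl(\frac{\delta}{2}j^*_G\bigr)$, as $\frac{\delta}{2}j^*_G<1$. You have in fact put your finger on a defect in the paper's own final step, which silently passes from $v_G\ge v_{G_1}$ and $\sin v_{G_1}\ge\frac{\delta}{2}j^*_G$ to $\sin v_G\ge\frac{\delta}{2}j^*_G$, illegitimate since $\sin$ is not monotone on $[0,\pi]$. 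The correct course is not a separate estimate for the obtuse regime but a restatement of the conclusion in arcsine form; with that restatement and the corrected selection step, your area-formula route becomes a valid alternative to the paper's tangent-circle argument.
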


\begin{proof}
Fix $x, y\in G\,.$ By symmetry we may suppose that $d(x)\leq d(y)\,.$ Denote $r=d(x)$ and choose a point $z\in\partial{G}$ such that $r=|x-z|\,.$ By the condition $H(\delta)$ there exists $w\in \mathbb{R}^2\setminus G$ such that
$B^2(w,\delta r)\subset B^2(z,r)\cap (\mathbb{R}^2\setminus G)\,.$ Denote $G_1=\mathbb{R}^2\setminus \overline{B}^2(w,\delta r)\,.$ By the monotonicity of $v_G$ with respect to the domain we have
\[
v_G(x,y)\geq v_{G_1}(x,y)\,.
\]
Geometrically, $v_{G_1}(x,y)$ can be found by considering the circle through $x, y$ externally tangent to  $B^2(w,\delta r)\,.$ Suppose this circle is $B^2(\tilde{c},\tilde{R})\,.$ In order to find a lower bound for $v_{G_1}$ we need an upper bound for $\tilde{R}\,.$ By elementary geometry $\tilde{R}\leq R$ where $B^2(c,R)$ corresponds to the case when $y=y_1=x+\frac{x-w}{|x-w|}|x-y|\,.$ Then $|x-y_1|=|x-y|\,.$ Using the power of the point $w$ with respect to the circle $\partial{B^2(c,R)}$ we have
\[
\delta r(\delta r+2R)= |x-w||y_1-w|=|x-w|(|x-w|+|x-y|)
\]
and hence
\[
2R=\frac{|x-w|}{\delta r}(|x-w|+|x-y|)-\delta r.
\]
In the same way as in the proof of Theorem \ref{3.1} we utilize the law of Sine to obtain
\[
R=\frac{|x-y_1|}{2\sin{v_{G_1}(x,y_1)}}=\frac{|x-y|}{2\sin{v_{G_1}(x,y_1)}}\,.
\]
Observing that $|x-w|\leq |x-z|+|z-w|\leq d(x)+(1-\delta)d(x)$ we have
\allowdisplaybreaks\begin{align*}
\sin{v_{G_1}(x,y)} \geq{} & \frac{|x-y|}{\frac{2-\delta}{\delta}(|x-w|+|x-y|)-\delta r}\\
\geq{} & \frac{|x-y|}{\frac{2-\delta}{\delta}((2-\delta)d(x)+|x-y|)-\delta d(x)}\\
= {}& \frac{\delta}{2-\delta}\cdot \frac{t}{t+\frac{4-4\delta}{2-\delta}}\\
\geq{} & \frac{\delta}{2-\delta}\cdot \frac{t}{t+2}=\frac{\delta}{2-\delta}\cdot \frac{e^{j_G(x,y)}-1}{e^{j_G(x,y)}+1} \geq{} \frac{\delta}{2} j^*_G(x,y)
\end{align*}
where $t=\frac{|x-y|}{d(x)}\,.$
\end{proof}

\section{Lipschitz conditions}\label{section 4}

One of the main reasons to study metrics in Geometric Function Theory is the distortion theory of mappings: the study how far  a map transforms two given points. In this section we will study the triangular ratio metric and other aforementioned metrics from this point of view. We start our discussion with the following result of F. W. Gehring and B. G. Osgood \cite{go}. We assume that the reader is familiar with the basic facts of the theory of $K-$quasiconformal/$K-$quasiregular maps  \cite{v1}, \cite{vu}. In particular,  we follow V\"ais\"al\"a's definition of  $K-$quasiconformality \cite[p. 42]{v1}.

\begin{thm}
Let $f:G\to G'$ be a $K-$quasiconformal homeomorphism between domains $G,G'\subset \mathbb{R}^n\,.$ Then there exists a constant $c=c(n,K)$ depending only on $n$ and $K$ such that for all $x,y\in G$
\[
k_{G'}(f(x)f(y))\leq \max\{k_G(x,y)^{\alpha},k_G(x,y)\}, \, \alpha=K^{1/(1-n)}\,.
\]
\end{thm}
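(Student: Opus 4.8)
The plan is to reconstruct the two-stage argument behind the Gehring--Osgood estimate: a \emph{local} H\"older bound valid when $x$ and $y$ are close in the quasihyperbolic metric, followed by a \emph{chaining} argument along a quasihyperbolic geodesic that upgrades the local bound to the stated $\max$-form. Throughout write $d(x)=d(x,\partial G)$ and recall that $\alpha=K^{1/(1-n)}\in(0,1]$ since $K\ge 1$ and $n\ge 2\,.$

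First I would establish the local estimate: there exist constants $c_0=c_0(n,K)$ and $r_0=r_0(n,K)\in(0,1]$ such that $k_G(x,y)\le r_0$ implies $k_{G'}(f(x),f(y))\le c_0\,k_G(x,y)^{\alpha}\,.$ Since $B^n(x,d(x))\subset G\,,$ the map $f$ restricts there to a $K$-quasiconformal embedding, and the local distortion theorem for quasiconformal maps (the quasiconformal Schwarz lemma, normalised by the similarities carrying $B^n(x,d(x))$ to $\mathbb{B}^n$) gives, for $y$ near $x\,,$
\[
\frac{|f(y)-f(x)|}{d(f(x),\partial G')}\le C(n,K)\left(\frac{|y-x|}{d(x)}\right)^{\alpha}\,.
\]
When $k_G(x,y)\le r_0$ the right-hand side is small, so $f(y)$ lies in a fixed fraction of the ball about $f(x)\,,$ where $k_{G'}(f(x),f(y))$ is comparable to $|f(y)-f(x)|/d(f(x),\partial G')\,.$ On the source side, $|x-y|/d(x)\le e^{j_G(x,y)}-1\le e^{k_G(x,y)}-1$ by \eqref{jk}, which for $k_G(x,y)\le r_0\le 1$ is at most a constant times $k_G(x,y)\,.$ Substituting both comparisons into the displayed inequality yields the local H\"older bound.

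Next I would run the chaining argument. For arbitrary $x,y\in G\,,$ let $\gamma$ be a quasihyperbolic geodesic joining them and place points $x=x_0,x_1,\dots,x_m=y$ along $\gamma$ at quasihyperbolic arclength $r_0$ apart, with a final shorter step, so that $k_G(x_{i-1},x_i)\le r_0$ for every $i$ and $m\le k_G(x,y)/r_0+1\,.$ The triangle inequality for $k_{G'}$ together with the local estimate gives
\[
k_{G'}(f(x),f(y))\le\sum_{i=1}^m k_{G'}(f(x_{i-1}),f(x_i))\le c_0\sum_{i=1}^m k_G(x_{i-1},x_i)^{\alpha}\le c_0\,m\,r_0^{\alpha}\,.
\]
If $k_G(x,y)\le r_0$ a single step already gives $c_0\,k_G(x,y)^{\alpha}\,;$ if $k_G(x,y)>r_0$ then $m\le 2k_G(x,y)/r_0\,,$ so the sum is at most $2c_0r_0^{\alpha-1}k_G(x,y)\,.$ Setting $c=\max\{c_0,\,2c_0r_0^{\alpha-1}\}$ gives $k_{G'}(f(x),f(y))\le c\max\{k_G(x,y)^{\alpha},k_G(x,y)\}\,.$

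I expect the local estimate, and in particular the sharp exponent $\alpha=K^{1/(1-n)}\,,$ to be the main obstacle. It rests on the quasiconformal Schwarz lemma, whose proof is itself substantial: it uses the conformal modulus of curve families, its quasi-invariance under $K$-quasiconformal maps, and the sharp capacity estimates for the Gr\"otzsch and Teichm\"uller rings, from which the exponent $K^{1/(1-n)}$ emerges. Once this modulus-of-continuity input is available, the metric comparisons and the chaining are routine.
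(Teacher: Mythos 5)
The paper states this theorem without proof, quoting it from Gehring--Osgood \cite{go}, and your two-stage reconstruction---a local H\"older estimate obtained from the quasiconformal Schwarz lemma (modulus quasi-invariance plus the Gr\"otzsch/Teichm\"uller capacity bounds giving the exponent $\alpha=K^{1/(1-n)}$), followed by chaining along a quasihyperbolic geodesic---is exactly the structure of the original cited argument, so your proposal is correct and takes essentially the same approach. It even restores the multiplicative constant $c$ that is accidentally missing from the paper's displayed inequality.
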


It is a natural question that whether a similar result holds for the metrics considered in this paper. Some of these questions have already been studied elsewhere \cite{chkv,hvw}. Before proceeding we mention a few well-known cases where the above result can be refined.

\begin{lem}
Let $f:G\to G'=fG$ be a M\"obius transformation where $G,G'\subset \mathbb{R}^n$ are domains. Then
$$  (1) \quad j_G(x,y)/2 \le j_{G'}(f(x),f(y)) \le 2 j_G(x,y)\,,$$
$$  (2) \quad k_G(x,y)/2 \le k_{G'}(f(x),f(y)) \le 2 k_G(x,y)\,,$$
for all $x,y \in G \,.$
\end{lem}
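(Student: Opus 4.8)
The plan is to prove the bilipschitz distortion of the distance ratio metric $j_G$ and the quasihyperbolic metric $k_G$ under a M\"obius transformation $f$, exploiting the fact that both metrics are built from the distance-to-boundary function $d(x,\partial G)$ and that M\"obius maps distort distances and these boundary distances in a controlled, symmetric way. Since $f^{-1}$ is again a M\"obius transformation mapping $G'$ back to $G$, it suffices to prove one inequality in each pair, say the upper bounds $j_{G'}(f(x),f(y)) \le 2\, j_G(x,y)$ and $k_{G'}(f(x),f(y)) \le 2\, k_G(x,y)$; applying the same bound to $f^{-1}$ with the roles of $G$ and $G'$ interchanged then yields the lower bounds automatically.

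The key tool is the pointwise distortion of the hyperbolic-type density by a M\"obius map. For a M\"obius transformation $f$, the linear distortion $|f'(x)|$ (the conformal factor) and the distortion of the distance to the boundary are comparable in the sense that the ratio $|f'(x)|\, d(x,\partial G)/d(f(x),\partial G')$ lies between $1/2$ and $2$; this is a standard consequence of the fact that M\"obius maps send balls to balls and of the Schwarz-type estimate for the boundary distance (see \cite[Theorem 1.4]{gp} and the discussion in \cite[2.16, 2.17]{vu} for the quasihyperbolic case). First I would record this comparison as the engine of the proof. For the quasihyperbolic metric, the density $1/d(z,\partial G)$ transforms under $f$ by the chain rule so that the length of the image curve satisfies $\int_{f\gamma} |dw|/d(w,\partial G') = \int_\gamma |f'(z)|\,|dz|/d(f(z),\partial G')$, and the pointwise comparison bounds the integrand by $2/d(z,\partial G)$; taking the infimum over curves then gives the factor $2$ in part (2).

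For part (1), the argument is more elementary because $j_G$ is given by an explicit formula rather than an infimum over paths. Here I would use the fact that M\"obius maps distort the Euclidean distance $|x-y|$ and the boundary distances $d(x),d(y)$ by the conformal factors $|f'(x)|,|f'(y)|$ up to the same factor-of-two ambiguity, so that the quantity $|f(x)-f(y)|/\min\{d(f(x)),d(f(y))\}$ is comparable to $|x-y|/\min\{d(x),d(y)\}$ within a bounded multiplicative constant. A clean route is to pass through $k_G$: since $j_G$ and $k_G$ are both invariant under the same pointwise density estimates, one can combine $j_G \le k_G$ (inequality \eqref{jk}) with the already-established quasihyperbolic bound and the converse comparison, but the sharp constant $2$ is more directly obtained from the explicit $j_G$ formula together with the distortion estimate for $|f(x)-f(y)|$ and the boundary distances. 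In fact part (2) can be used to deduce part (1) efficiently once one notes how the logarithmic structure of $j_G$ interacts with the factor $2$.

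The main obstacle will be establishing the pointwise comparison $1/2 \le |f'(x)|\, d(x,\partial G)/d(f(x),\partial G') \le 2$ with the correct constant $2$, rather than some larger M\"obius-dependent constant. The subtlety is that for a general M\"obius transformation the quantity $|f'(x)|\,d(x,\partial G)$ need not equal $d(f(x),\partial G')$ exactly (equality holds only for similarities), and one must verify that the deviation is controlled by exactly the factor $2$ uniformly over all $x\in G$ and all domains $G$. I expect this to follow from the standard estimate that for a M\"obius map the ratio of the conformal factors at two points equals a cross-ratio expression bounded in terms of the hyperbolic geometry, but pinning down the constant $2$ (as opposed to merely some universal constant) is where the care is needed; this is precisely the content cited from \cite{gp} and \cite{vu}, so I would invoke those references to close the argument cleanly.
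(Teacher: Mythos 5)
Your argument for part (2) is correct, and it is essentially the proof the paper points to: the paper gives no self-contained argument, citing \cite[Corollary 2.5]{gp} and \cite[proof of Theorem 4]{go}, and the Gehring--Palka proof is exactly your combination of the pointwise bound $\tfrac12\le |f'(x)|\,d(x,\partial G)/d(f(x),\partial G')\le 2$ (valid because $f$ maps $B(x,d(x))$ onto a ball or half-space, whose M\"obius-invariant hyperbolic density $\lambda$ satisfies $1/d\le\lambda\le 2/d$), integration of this bound along curves, and the reduction of the lower bounds to the upper bounds by applying them to $f^{-1}$.

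Part (1), however, contains a genuine gap. The step you rely on --- that $t':=|f(x)-f(y)|/\min\{d(f(x)),d(f(y))\}$ is comparable to $t:=|x-y|/\min\{d(x),d(y)\}$ within a bounded multiplicative constant --- is false. The factor $2$ in the statement sits \emph{outside} the logarithm, so $j_{G'}\le 2j_G$ asserts only $1+t'\le (1+t)^2$, and this quadratic growth really occurs: for the Cayley map $f(z)=i(1+z)/(1-z)$ of $\mathbb{B}^2$ onto $\mathbb{H}^2$ and the points $x=r$, $y=-r$, one computes $t=2r/(1-r)$ but $t'=4r/(1-r)^2$, so $t'/t=2/(1-r)\to\infty$ as $r\to 1$ (in fact $1+t'=(1+t)^2$ here, which also shows the constant $2$ is sharp). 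Your fallback routes fail for the same structural reason: deducing (1) from (2), or ``passing through $k_G$'', would require $k_G\le C\,j_G$, which is precisely the definition of a uniform domain (as the paper's remark at the end of Section \ref{section 2} recalls) and fails for general $G$. Even the strongest quantitative version of your sketch --- the exact M\"obius identity $|f(x)-f(y)|=\sqrt{|f'(x)||f'(y)|}\,|x-y|$ combined with the two-sided boundary-distance estimate --- yields only $t'\le 2t(1+t)$, whose associated bilipschitz constant is about $2.34$, not $2$. What is actually needed is the cross-ratio argument from the proof of \cite[Theorem 4]{go}: assuming $d(f(x))\le d(f(y))$ (the other case is symmetric) and that $f$ is not a similarity (else the claim is trivial), choose $w\in\partial G'$ with $|f(x)-w|=d(f(x))$, and set $\zeta=f^{-1}(w)\in\partial G$ and $q=f^{-1}(\infty)\notin G$; M\"obius invariance of the absolute ratio gives $t'=\frac{|x-y|}{|x-\zeta|}\cdot\frac{|\zeta-q|}{|y-q|}$, and then $|x-\zeta|\ge d(x)$, $|y-q|\ge d(y)$ and $|\zeta-q|\le |\zeta-x|+|x-y|+|y-q|$ give $t'\le 2t+t^2$, i.e.\ $1+t'\le(1+t)^2$ as required. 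The point your sketch misses is that one must exploit the \emph{exact} equality $d(f(x))=|f(x)-w|$ at a nearest boundary point; the factor-of-two pointwise comparison is too lossy to produce the constant $2$.
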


\begin{proof} See \cite[Corollary 2.5]{gp} and \cite[proof of Theorem 4]{go}.
\end{proof}

\begin{lem} Let $f:G \to G'$ be a conformal map between domains $G, G' \subset  \mathbb{R}^2\,.$
Then
$$ \quad k_G(x,y)/4 \le k_{G'}(f(x),f(y)) \le 4 k_G(x,y)$$
for all $x,y \in G \,.$
\end{lem}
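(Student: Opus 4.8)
The plan is to use the near-conformal invariance of the quasihyperbolic metric, whose only non-elementary ingredient is the Koebe one-quarter theorem; both the constant $4$ and the restriction to $\mathbb{R}^2$ point to this. Since $f$ is a conformal bijection, every rectifiable arc joining $f(x)$ to $f(y)$ in $G'$ is the image $f\circ\gamma$ of a unique arc $\gamma$ joining $x$ to $y$ in $G$, and with $w=f(z)$ one has $|dw|=|f'(z)|\,|dz|$. Thus
$$k_{G'}(f(x),f(y))=\inf_{\gamma\colon x\to y}\int_\gamma\frac{|f'(z)|}{d(f(z),\partial G')}\,|dz|\,,$$
and the problem reduces to comparing the weight $|f'(z)|/d(f(z),\partial G')$ with $1/d(z,\partial G)$ at each point.

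The key step is the two-sided distortion estimate
$$\tfrac14\,|f'(z)|\,d(z,\partial G)\ \le\ d(f(z),\partial G')\ \le\ 4\,|f'(z)|\,d(z,\partial G)\,.$$
To prove the lower bound I fix $z\in G$, set $r=d(z,\partial G)$, note $B^2(z,r)\subset G$, and normalize by $g(\zeta)=(f(z+r\zeta)-f(z))/(r f'(z))$, which is univalent on $\mathbb{B}^2$ with $g(0)=0$, $g'(0)=1$. The Koebe theorem gives $g(\mathbb{B}^2)\supset B^2(0,1/4)$, that is $B^2(f(z),|f'(z)|r/4)\subset f(B^2(z,r))\subset G'$, whence $d(f(z),\partial G')\ge|f'(z)|\,d(z,\partial G)/4$. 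Applying the same reasoning to $f^{-1}$ at $w=f(z)$, where $(f^{-1})'(w)=1/f'(z)$, yields $d(z,\partial G)\ge d(f(z),\partial G')/(4|f'(z)|)$, which is the upper bound.

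With the estimate in hand, dividing through by $d(f(z),\partial G')$ gives, pointwise,
$$\frac{1}{4\,d(z,\partial G)}\ \le\ \frac{|f'(z)|}{d(f(z),\partial G')}\ \le\ \frac{4}{d(z,\partial G)}\,.$$
Write $I(\gamma)=\int_\gamma|f'(z)|\,|dz|/d(f(z),\partial G')$, so that $k_{G'}(f(x),f(y))=\inf_\gamma I(\gamma)$. The left inequality gives $I(\gamma)\ge\tfrac14\int_\gamma|dz|/d(z,\partial G)\ge\tfrac14 k_G(x,y)$ for every $\gamma$, whence $k_G(x,y)/4\le k_{G'}(f(x),f(y))$; the right inequality, applied to an arc nearly realizing $k_G(x,y)$, gives $k_{G'}(f(x),f(y))\le I(\gamma)\le 4\int_\gamma|dz|/d(z,\partial G)$, which can be made arbitrarily close to $4k_G(x,y)$, so $k_{G'}(f(x),f(y))\le 4k_G(x,y)$. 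The only real obstacle is the correct normalization and application of the Koebe theorem — in particular noticing that it must be invoked for both $f$ and $f^{-1}$ to obtain a two-sided bound; the remaining change-of-variables and infimum steps are routine. This is the classical conformal quasi-invariance of the quasihyperbolic metric, essentially \cite[proof of Theorem 4]{go} specialized to the plane.
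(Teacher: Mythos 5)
Your proof is correct, but it is worth noting that the paper itself contains no argument at all for this lemma: its ``proof'' is the single citation \cite[Proposition 1.6]{kvz}. What you have written is essentially the standard proof of that cited result, so rather than taking a different route, you have reconstructed the argument the paper delegates to the literature. The two ingredients you isolate are exactly the right ones: the pointwise two-sided distortion estimate $\tfrac14\,|f'(z)|\,d(z,\partial G)\le d(f(z),\partial G')\le 4\,|f'(z)|\,d(z,\partial G)$, obtained by applying the Koebe one-quarter theorem to the renormalized map $g(\zeta)=(f(z+r\zeta)-f(z))/(rf'(z))$ and --- the step that is easy to miss --- applying it a second time to $f^{-1}$ to get the opposite inequality; and then the change of variables together with the infimum argument over arcs. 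Your remark that the restriction to the plane is forced by Koebe is also on target: in dimension $n\ge 3$ conformal maps are M\"obius by Liouville's theorem, and the paper handles that case separately with the better constant $2$, citing \cite{go}. Two minor points you could make explicit for completeness: $f'(z)\neq 0$ because a univalent analytic map has nonvanishing derivative, so the normalization of $g$ is legitimate; and if ``conformal'' is read to include orientation-reversing maps, one reduces to the analytic case by composing with a reflection, which changes neither quasihyperbolic metric. The advantage of your version over the paper's is self-containedness; the cost is length, which is presumably why the authors chose to cite.
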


\begin{proof} See \cite[Proposition 1.6]{kvz}.
\end{proof}

These results may be refined further for instance if $G=G' = \mathbb{B}^n $ as shown in \cite{kvz} or if  $G=G' = \mathbb{R}^n \setminus \{ 0\} \,.$
Here our goal is to study the extent to which these results have counterparts for the triangular ratio metric.

V\"ais\"al\"a \cite{v2} has proved that an $L-$bilipschitz map with respect to the quasihyperbolic metric is a quasiconformal map with the linear dilatation $4L^2\,.$ Motivated partly by his work we consider bilipschitz maps with respect to the triangular ratio metric, and our result gives a refined upper bound $L^2$ of the linear dilatation in the case of Euclidean spaces.

\begin{thm}
Let $G\subsetneq \Rn$ be a domain and let $f:G\rightarrow fG\subset \Rn$ be a sense-preserving homeomorphism, satisfying $L$-bilipschitz condition with respect to the triangular ratio metric, i.e.
\[
 s_G(x,y)/L\leq s_{fG}(f(x),f(y))\leq L s_G(x,y)\,,
 \]
 holds for all $x, y\in G\,.$ Then $f$ is quasiconformal with the linear dilatation $H(f)\leq L^2\,.$
\end{thm}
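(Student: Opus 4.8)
The plan is to reduce the global bilipschitz assumption to an infinitesimal statement about linear dilatation by examining the behavior of $s_G$ for points that are very close together. The linear dilatation $H(f)$ at a point $x$ is defined as
\[
H(f,x)=\limsup_{r\to 0}\frac{\max_{|y-x|=r}|f(y)-f(x)|}{\min_{|y-x|=r}|f(y)-f(x)|},
\]
and $H(f)=\sup_x H(f,x)$. The key observation is that when $x,y$ lie in a small ball $B(x,r)$ with $r$ small compared with $d(x,\partial G)$, the boundary is far away, so $s_G(x,y)$ is comparable to $|x-y|$ up to a factor that tends to $1$ as $r\to 0$. More precisely, I would use the chain $j_G^*\le s_G\le \tfrac12(e^{j_G}-1)$ from Lemma \ref{1.1} together with the elementary fact that $j_G(x,y)\to 0$ and $e^{j_G(x,y)}-1\approx |x-y|/d(x)$ as $y\to x$; hence
\[
s_G(x,y)=\frac{|x-y|}{2\,d(x)}\bigl(1+o(1)\bigr)\qquad\text{as }y\to x,
\]
and the same expansion holds in $fG$ at the image point $f(x)$ with $d(f(x))$ in place of $d(x)$.

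\emph{Carrying this out,} fix $x\in G$ and consider two sequences $y_r,y_r'$ on the sphere $S^{n-1}(x,r)$ realizing (asymptotically) the maximal and minimal image distances. Applying the bilipschitz inequality to each pair and substituting the local expansion for $s_G$ and $s_{fG}$ gives, after the $d(x)$ and $d(f(x))$ factors cancel in the ratio,
\[
\frac{|f(y_r)-f(x)|}{|f(y_r')-f(x)|}\le L^2\bigl(1+o(1)\bigr),
\]
and letting $r\to 0$ yields $H(f,x)\le L^2$. Since $x$ was arbitrary, $H(f)\le L^2$. Because $f$ is a sense-preserving homeomorphism with uniformly bounded linear dilatation, the standard metric definition of quasiconformality (V\"ais\"al\"a \cite{v1}) shows $f$ is quasiconformal; the bound $H(f)\le L^2$ is exactly the claimed refinement of V\"ais\"al\"a's $4L^2$.

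\textbf{The main obstacle} is making the local asymptotic $s_G(x,y)\sim |x-y|/(2d(x))$ genuinely \emph{uniform} and correctly matched on both sides. One must ensure that the same nearest-boundary-point normalization is used in $G$ and in $fG$, and that the $o(1)$ error terms coming from the two-sided estimate in Lemma \ref{1.1} are controlled independently of the direction of $y-x$; otherwise the $\limsup$/$\liminf$ defining $H(f,x)$ could pick up a spurious constant. A clean way to handle this is to note that the metric density of $s_G$ at $x$ is isotropic, equal to $1/(2d(x))$ times the Euclidean density, so that in the small-scale limit $s_G$ and the Euclidean metric induce the same linear dilatation for $f$; the factor $L$ then enters quadratically precisely because the bilipschitz constant is applied once in each direction. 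Verifying this limiting density carefully — rather than just the pointwise bounds — is where the real work lies, but it follows directly from the squeeze in Lemma \ref{1.1}.
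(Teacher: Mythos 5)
Your proposal is correct and follows essentially the same route as the paper: both arguments rest on the fact that, for $|x-y|$ small compared with $d(x)$, the two-sided bounds on $s_G$ (your Lemma~\ref{1.1} squeeze, the paper's triangle-inequality estimate \eqref{3}) force $s_G(x,y)=\frac{|x-y|}{2d(x)}(1+o(1))$, so that applying the bilipschitz condition once in each direction and cancelling the common factor $d(f(x))$ (the paper's quantity $A(x,y,z)\to 1$) yields $H(f,x)\le L^2$ in the limit $r\to 0$. The differences are only in bookkeeping, not in substance.
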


\begin{proof}
If $x, y\in G$ satisfy $|x-y|<\min\{d(x),d(y)\}$ and $w\in\partial G$ with $d(x)=|x-w|$ is a point, then it is easy to see that
\[
s_G(x,y)\geq \frac{|x-y|}{|x-w|+|w-y|}\geq \frac{|x-y|}{2\min\{d(x),d(y)\}+|x-y|}\,,
\]
and
\[
s_G(x,y)\leq \frac{|x-y|}{d(x)+d(y)}\leq \frac{|x-y|}{2\min\{d(x),d(y)\}-|x-y|}\,,
\]
from which we conclude that
\beq\label{3}
\frac{2\min\{d(x),d(y)\}}{{1}/{s_G(x,y)}+1}\leq |x-y|\leq \frac{2\min\{d(x),d(y)\}}{{1}/{s_G(x,y)}-1}\,.
\eeq
For an arbitrary point $z\in{G}\,,$ let $x,y\in G$ with $|x-z|=|y-z|=r$ where $r$ is small enough such that the following argument is meaningful, i.e. all the terms are positive. Let
$$A(x,y,z)=\frac{\min\{d(f(x)),\,d(f(z))\}}{\min\{d(f(y)),\,d(f(z))\}}\,,$$
which tends to 1 as $x,y$ tend to $z\,.$ Then by the estimate \eqref{3} we get
\allowdisplaybreaks\begin{align*}
\frac{|f(x)-f(z)|}{|f(y)-f(z)|} &\leq A(x,y,z)\frac{{1}/{s_{fG}(f(y),f(z))}+1}{{1}/{s_{fG}(f(x),f(z))}-1}\\
& \leq A(x,y,z)\frac{{L}/s_{G}(y,z)+1}{{1}/(L s_{G}(x,z))-1}\\
& \leq A(x,y,z)\frac{{L}/({|y-z|}/(2\min\{d(y),d(z)\}+|y-z|))+1}{{1}/(L {|x-z|}/{(2\min\{d(x),d(z)\}-|x-z|)})-1}\\
&= A(x,y,z)\frac{2L^2\min\{d(y),d(z)\}+(L^2+L)|y-z|}{2\min\{d(x),d(z)\}-(L+1)|x-z|}\frac{|x-z|}{|y-z|}\\
& \rightarrow   L^2\,,
\end{align*}
when $r=|x-z|=|y-z|\rightarrow 0\,.$
Hence
\[
H(f,z)=\limsup_{|x-z|=|y-z|=r\rightarrow 0^+}{\frac{|f(x)-f(z)|}{|f(y)-f(z)|}}\leq L^2 \,.\qedhere
\]
\end{proof}

\begin{cor}
Let $G\subset \Rn$ be a domain and let $f:G\rightarrow fG\subset \Rn$ be a sense-preserving homeomorphism, satisfying $L$-bilipschitz condition with respect to the distance ratio metric or quasihyperbolic metric. Then $f$ is quasiconformal with linear dilatation $H(f)\leq L^2\,.$
\end{cor}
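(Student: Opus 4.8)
The final statement is the corollary asserting that an $L$-bilipschitz map with respect to either $j_G$ or $k_G$ is quasiconformal with linear dilatation $H(f)\le L^2$.

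The plan is to reduce both cases to the theorem just proved for the triangular ratio metric, by showing that the bilipschitz estimates \eqref{3} survive the infinitesimal limit for $j_G$ and $k_G$ exactly as they did for $s_G$. The key observation is that the entire argument in the preceding theorem depends only on the two-sided comparison between $|x-y|$ and $\min\{d(x),d(y)\}$ that holds when $x,y$ are close together; the specific metric $s_G$ enters only through this local linearization. So first I would establish the analogue of \eqref{3} for $j_G$ directly: for $x,y$ with $|x-y|<\min\{d(x),d(y)\}$, the definition $j_G(x,y)=\log(1+|x-y|/\min\{d(x),d(y)\})$ gives exactly
\[
|x-y| = \min\{d(x),d(y)\}\,(e^{j_G(x,y)}-1),
\]
so that $|x-y|/\min\{d(x),d(y)\}\to 0$ forces $j_G(x,y)\to 0$, and conversely $j_G(x,y)\sim |x-y|/\min\{d(x),d(y)\}$ to first order. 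Feeding the $L$-bilipschitz bounds $j_G(x,y)/L\le j_{fG}(f(x),f(y))\le L\,j_G(x,y)$ into this identity and repeating the estimate on the ratio $|f(x)-f(z)|/|f(y)-f(z)|$ verbatim yields the same limit $L^2$, since the correction terms are of higher order in $r$ and vanish.

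For the quasihyperbolic case I would invoke the comparison \eqref{jk}, namely $j_G(x,y)\le k_G(x,y)$, together with the local equivalence of $k_G$ and $j_G$ near the diagonal. Concretely, when $x,y$ lie in a small ball $B(z,\lambda d(z))$ one has the reverse comparison $k_G(x,y)\le \frac{1+\lambda}{1-\lambda}\,j_G(x,y)$ (this is the $j$–$k$ comparison built into Lemma \ref{kz}-type arguments, since on a ball $j$ and $k$ are bilipschitz with constant tending to $1$ as $\lambda\to 0$). Thus as $r\to 0$ the two metrics become asymptotically equal, and an $L$-bilipschitz condition for $k_G$ passes to an $L'$-bilipschitz condition for $j_G$ with $L'\to L$; the infinitesimal dilatation computed through $j_G$ is unchanged, giving $H(f)\le L^2$ again. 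Alternatively, and more cleanly, I would just note that the linearization $k_G(x,y)\sim |x-y|/d(x)$ as $y\to x$ holds for every proper subdomain, so the same substitution into the ratio estimate works directly.

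The main obstacle, and the only place requiring genuine care, is verifying that the higher-order correction terms in the two-sided inequality for $j_G$ (and $k_G$) really are negligible in the limit $r=|x-z|=|y-z|\to 0$, and that the quantity $A(x,y,z)$ measuring the asymmetry of the boundary distances still tends to $1$; both follow from continuity of $d(\cdot,\partial(fG))$ and the homeomorphism property of $f$, exactly as in the theorem. Once the local asymptotics $j_G(x,y)\sim k_G(x,y)\sim |x-y|/\min\{d(x),d(y)\}$ are recorded, the computation of $H(f,z)$ is identical to the one already carried out, and the conclusion $H(f)\le L^2$ is immediate.
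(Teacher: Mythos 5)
Your proposal is correct, and its overall strategy coincides with the paper's: both settle the $j$-metric case first and then reduce the quasihyperbolic case to it through the local comparison of $j_G$ and $k_G$ near the diagonal (the paper cites \cite[Lemma 3.7]{vu}, which gives $j_G(x,y)\le k_G(x,y)\le j_G(x,y)/(1-\lambda)$ for $y\in B^n(x,\lambda d(x))$; your stated constant $\tfrac{1+\lambda}{1-\lambda}$ for two points of $B(z,\lambda d(z))$ is not justified as written, but this is immaterial, since all that enters the argument is that the comparison constant tends to $1$ as $\lambda\to 0$). The difference lies in how the $j$-case is executed. You rerun the ratio computation of the preceding theorem, replacing the two-sided estimate \eqref{3} by the exact identity $|x-y|=\min\{d(x),d(y)\}\,(e^{j_G(x,y)}-1)$ and passing to the limit $r\to 0$, where the bilipschitz bounds give $e^{Lj_G(x,z)}-1\sim Lr/d(z)$ and $e^{j_G(y,z)/L}-1\sim r/(Ld(z))$, hence $H(f,z)\le L^2$. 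The paper instead uses Lemma \ref{1.1} to show that at small scales $f$ is $L(1+\varepsilon)^2$-bilipschitz with respect to $s_G$, invokes the statement of the $s$-metric theorem as a black box to get $H(f)\le L^2(1+\varepsilon)^4$, and lets $\varepsilon\to 0$. Your version is somewhat more self-contained and avoids the $\varepsilon$-bookkeeping (the identity for $j_G$ is exact, so no smallness restriction such as $|x-y|<\min\{d(x),d(y)\}$ is needed), while the paper's version buys reusability: once two metrics are shown locally bilipschitz-equivalent near the diagonal with constant tending to $1$, the conclusion transfers without reopening the limit computation, which is precisely how it dispatches the $k$-metric case in one line.
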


\begin{proof}
 By Lemma \ref{1.1}, $j^*_G(x,y)\leq s_G(x,y)\leq \frac{e^{j_G(x,y)}-1}{2}$ for all $x, y\in G\,.$ It follows that for arbitrary $\varepsilon >0\,,$ there exists $\delta >0$ such that for all $x, y\in G$ satisfying $j_G(x,y)< \delta$ we have that
\[
\frac{j_G(x,y)}{2(1+\varepsilon)}\leq s_G(x,y)\leq \frac{1+\varepsilon}{2}j_G(x,y)\,.
\]
For an $L$-bilipschitz mapping with respect to $j$-metric, we choose $x, y\in G$ such that $j_G(x,y)<\frac{\delta}{L}\,.$ Then
\begin{eqnarray*}
s_{fG}(f(x),f(y))\leq \frac{1+\varepsilon}{2}j_{fG}(f(x),f(y))&\leq & \frac{L(1+\varepsilon)}{2}j_G(x, y)\\
&\leq & L (1+\varepsilon)^2 s_G(x,y)\,.
\end{eqnarray*}
Similarly, we also have
\[
s_{fG}(f(x),f(y))\geq \frac{j_{fG}(f(x),f(y))}{2(1+\varepsilon)}\geq \frac{j_G(x, y)}{2L (1+\varepsilon)}\geq \frac{s_G(x, y)}{L (1+\varepsilon)^2}\,.
\]
Hence an $L-$bilipschitz mapping with respect to $j-$metric is in fact locally $L(1+\varepsilon)^2-$bilipschitz with respect to $s-$metric, from which we get that the mapping is quasiconformal with linear dilatation $H(f)\leq L^2(1+\varepsilon)^4\,.$ Since $\varepsilon$ is arbitrary, we conclude that the mapping is actually quasiconformal with linear dilatation $H(f)\leq L^2\,.$

Because $0<\lambda<1\,,$ $x\in G$ and $y\in B^n(x,\lambda d(x))\,,$ we have by \cite[Lemma 3.7]{vu} that $$j_G(x,y)\leq k_G(x,y)\leq j_G(x,y)/(1-\lambda)\, .$$ The same argument applies to $L-$bilipschitz mapping in $k-$metric, i.e. an $L-$bilipschitz mapping in $k-$metric is a quasiconformal mapping with linear dilatation $H(f)\leq L^2\,.$
\end{proof}

\begin{cor}
Let $G\subset \Rn$  be a domain and let $f:G\rightarrow fG\subset \Rn$ be a sense-preserving isometry with respect to the triangular ratio metric, distance ratio metric, or quasihyperbolic metric. Then $f$ is a conformal mapping. In particular, for $n\geq 3$ the mapping $f$ is the restriction of a M\"obius map.
\end{cor}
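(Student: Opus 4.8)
The plan is to recognize that an isometry is precisely a $1$-bilipschitz map and then to feed $L=1$ into the quantitative results already established. First I would note that a sense-preserving isometry $f$ with respect to any one of the metrics $s_G$, $j_G$, $k_G$ is in particular a sense-preserving homeomorphism satisfying the $L$-bilipschitz condition with $L=1$, since each of these three metrics induces the Euclidean topology on $G$. Consequently the hypotheses of the preceding theorem (for the triangular ratio metric) and of the preceding corollary (for the distance ratio and quasihyperbolic metrics) are met in each case.

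Next I would apply those results with $L=1$ to conclude that $f$ is quasiconformal with linear dilatation $H(f)\le L^2=1$. On the other hand, directly from its definition as a ratio of a maximal to a minimal displacement,
\[
H(f,z)=\limsup_{|x-z|=|y-z|=r\to 0^+}\frac{|f(x)-f(z)|}{|f(y)-f(z)|}\ge 1
\]
at every point $z\in G$. Combining the two bounds forces $H(f,z)=1$ for all $z\in G$, so that $f$ is $1$-quasiconformal.

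Finally I would invoke the classical fact that a $1$-quasiconformal homeomorphism is conformal, which yields the first assertion for all $n\ge 2\,.$ For the last assertion I would appeal to Liouville's theorem: in dimension $n\ge 3$ every conformal (equivalently, $1$-quasiconformal) mapping between domains in $\Rn$ is the restriction of a M\"obius transformation.

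I do not expect any serious obstacle here, as the corollary is essentially a specialization of the earlier bilipschitz theorems to the boundary case $L=1\,.$ The only points that require care are the elementary observations that an isometry is $1$-bilipschitz and that the linear dilatation always satisfies $H(f,z)\ge 1$, together with the correct citation of the principle that $1$-quasiconformality implies conformality and of Liouville's theorem for the passage to M\"obius maps when $n\ge 3\,.$
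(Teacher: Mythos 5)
Your proposal is correct and follows essentially the same route as the paper: specialize the preceding bilipschitz theorems to $L=1$, obtain $H(f)\le 1$, and conclude via the fact that a $1$-quasiconformal map is conformal together with Liouville's theorem for $n\ge 3$. The paper states this more tersely, but your added observations (that an isometry is $1$-bilipschitz and that $H(f,z)\ge 1$ always) are exactly the implicit steps being used.
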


\begin{proof}
The result follows from the fact  that a $1-$quasiconformal mapping is conformal and Liouville's theorem in higher dimensions.
\end{proof}

P. H\"ast\"o \cite{hasto} has considered the isometries of the quasihyperbolic metric on plane domains and proved that, except for the trivial case of a half-plane where the quasihyperbolic metric coincides with the hyperbolic metric, the isometries are exactly the similarity mappings. Note that an additional condition of $C^3$ smoothness of the boundary of the domain is needed.

\begin{thm}\label{smobius}
Let $f:\mathbb{B}^n\rightarrow G\,,$ $G\in \{\mathbb{B}^n,\, \mathbb{H}^n \}$ be a M\"obius transformation.
Then for $x,y\in \mathbb{B}^n$ we have
\[
s_{G}(f(x),f(y))\leq \frac{2s_{\mathbb{B}^n}(x,y)}{1+s^2_{\mathbb{B}^n}(x,y)}\,.
\]
\end{thm}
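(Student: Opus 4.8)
The plan is to exploit the single structural fact that a M\"obius transformation $f:\mathbb{B}^n\to G$ with $G\in\{\mathbb{B}^n,\mathbb{H}^n\}$ is an isometry of the hyperbolic metric, so that $\rho_G(f(x),f(y))=\rho_{\mathbb{B}^n}(x,y)$ for all $x,y\in\mathbb{B}^n$. This collapses the whole statement into an inequality relating the hyperbolic metric to the triangular ratio metric, which is precisely what Lemma \ref{lem2.12} and \eqref{mysH} provide. I would handle the two target domains $\mathbb{H}^n$ and $\mathbb{B}^n$ in parallel, since after invoking M\"obius invariance they differ only in whether \eqref{mysH} gives an equality or Lemma \ref{lem2.12} gives an upper bound.

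First I would set $\tau=\th\frac{\rho_{\mathbb{B}^n}(x,y)}{4}$ and record the double-angle identity for the hyperbolic tangent, $\th\frac{\rho_{\mathbb{B}^n}(x,y)}{2}=\frac{2\tau}{1+\tau^2}$. Using M\"obius invariance of $\rho$ together with \eqref{mysH} when $G=\mathbb{H}^n$ and the third inequality $s_{\mathbb{B}^n}\le\th\frac{\rho_{\mathbb{B}^n}}{2}$ of Lemma \ref{lem2.12} when $G=\mathbb{B}^n$, both cases yield the uniform bound
\[
s_G(f(x),f(y))\leq \th\frac{\rho_G(f(x),f(y))}{2}=\th\frac{\rho_{\mathbb{B}^n}(x,y)}{2}=\frac{2\tau}{1+\tau^2}.
\]
For $G=\mathbb{H}^n$ the first relation is an equality coming from \eqref{mysH}; for $G=\mathbb{B}^n$ it is the upper estimate of Lemma \ref{lem2.12}.

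Next I would replace $\tau$ by $s_{\mathbb{B}^n}(x,y)$. The first inequality of Lemma \ref{lem2.12} gives $\tau=\th\frac{\rho_{\mathbb{B}^n}(x,y)}{4}\le s_{\mathbb{B}^n}(x,y)$, and both quantities lie in $[0,1]$. The elementary function $g(u)=\frac{2u}{1+u^2}$ is increasing on $[0,1]$, since $g'(u)=\frac{2(1-u^2)}{(1+u^2)^2}\ge 0$ there, so
\[
\frac{2\tau}{1+\tau^2}=g(\tau)\leq g\bigl(s_{\mathbb{B}^n}(x,y)\bigr)=\frac{2\,s_{\mathbb{B}^n}(x,y)}{1+s_{\mathbb{B}^n}^2(x,y)}.
\]
Chaining this with the previous display gives the claimed inequality in both cases.

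The argument is almost entirely bookkeeping: the only genuine inputs are the M\"obius invariance of $\rho$ and the sharp two-sided comparison of Lemma \ref{lem2.12}. The crux, such as it is, lies in the choice of $\tau=\th\frac{\rho_{\mathbb{B}^n}}{4}$ and the observation that the double-angle identity turns the bound $\th\frac{\rho_{\mathbb{B}^n}}{2}$ into exactly the form $g(\tau)$; the monotonicity of $g$ on $[0,1]$ is then what lets us trade the lower bound $\th\frac{\rho_{\mathbb{B}^n}}{4}\le s_{\mathbb{B}^n}$ for a clean estimate in terms of $s_{\mathbb{B}^n}$ alone. Recognizing this double-angle structure as the organizing principle is the key insight; without it the right-hand side $2s_{\mathbb{B}^n}/(1+s_{\mathbb{B}^n}^2)$ would not appear naturally.
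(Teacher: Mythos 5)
Your proposal is correct and follows essentially the same route as the paper: Möbius invariance of the hyperbolic metric, the identity \eqref{mysH} for the half-space case, the comparison inequalities of Lemma \ref{lem2.12}, and the double-angle identity $\th\frac{\rho}{2}=\frac{2\th\frac{\rho}{4}}{1+\th^2\frac{\rho}{4}}$. The only difference is cosmetic: you treat the two cases in parallel and make explicit the monotonicity of $u\mapsto \frac{2u}{1+u^2}$ on $[0,1]$, which the paper's proof uses implicitly in its final estimate.
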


\begin{proof}
For $G=\mathbb{H}^n$, by \eqref{mysH}, \cite[(2.21)]{vu} and Lemma \ref{lem2.12}, we have for all $x, y\in\mathbb{B}^n\,,$
\begin{eqnarray*}
s_{\mathbb{H}^n}(f(x),f(y))&=& \th \left( \frac{\rho_{\mathbb{H}^n}(f(x),f(y))}{2} \right) \\
&=& \th  \frac{\rho_{\mathbb{B}^n}(x,y)}{2} \\
&=& \frac{2\th\frac{\rho_{\mathbb{B}^n}(x,y)}{4}}{1+\th ^2\frac{\rho_{\mathbb{B}^n}(x,y)}{4}}\\
&\leq & \frac{2s_{\mathbb{B}^n}(x,y)}{1+s^2_{\mathbb{B}^n}(x,y)}\,.
\end{eqnarray*}
Similarly, for $G=\mathbb{B}^n\,,$ by Lemma \ref{lem2.12} and  \cite[(2.20)]{vu}, we have for all $x, y\in\mathbb{B}^n\,,$
\begin{eqnarray*}
s_{\mathbb{B}^n}(f(x),f(y))&\leq& \th \left( \frac{\rho_{\mathbb{B}^n}(f(x),f(y))}{2} \right) \\
&=& \th  \frac{\rho_{\mathbb{B}^n}(x,y)}{2} \\
&=& \frac{2\th\frac{\rho_{\mathbb{B}^n}(x,y)}{4}}{1+\th ^2\frac{\rho_{\mathbb{B}^n}(x,y)}{4}}\\
&\leq & \frac{2s_{\mathbb{B}^n}(x,y)}{1+s^2_{\mathbb{B}^n}(x,y)}\,.  
\end{eqnarray*}
\end{proof}
%
%

\begin{thm}
\begin{enumerate}

\item
Let $f:\mathbb{B}^n\rightarrow \mathbb{H}^n$ be a M\"obius transformation.
Then for $x,y\in \mathbb{B}^n$ we have
\[
p_{\mathbb{B}^n}(x,y) \leq p_{\mathbb{H}^n}(f(x),f(y))\leq \frac{2p_{\mathbb{B}^n}(x,y)}{1+p^2_{\mathbb{B}^n}(x,y)}\,.
\]

\item
Let $f:\mathbb{B}^n\rightarrow \mathbb{B}^n$ be a M\"obius transformation.
Then for $x,y\in \mathbb{B}^n$ we have
\[
\frac{p_{\mathbb{B}^n}(x,y)}{1+\sqrt{1-p^2_{\mathbb{B}^n}(x,y)}} \leq p_{\mathbb{B}^n}(f(x),f(y))\leq \frac{2p_{\mathbb{B}^n}(x,y)}{1+p^2_{\mathbb{B}^n}(x,y)}\,.
\]

\item
Let $f:\mathbb{H}^n\rightarrow \mathbb{B}^n$ be a M\"obius transformation.
Then for $x,y\in \mathbb{H}^n$ we have
\[
\frac{p_{\mathbb{B}^n}(x,y)}{1+\sqrt{1-p^2_{\mathbb{B}^n}(x,y)}} \leq p_{\mathbb{H}^n}(f(x),f(y))\leq \frac{2p_{\mathbb{B}^n}(x,y)}{1+p^2_{\mathbb{B}^n}(x,y)}\,.
\]
\end{enumerate}

\end{thm}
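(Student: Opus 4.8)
The plan is to push everything back to the hyperbolic metric and then read off the bounds as the images of two fixed monotone functions. In each of the three cases $f$ is a Möbius transformation between two of the model domains, so the Möbius invariance of the hyperbolic metric recorded in the Hyperbolic metric subsection gives $\rho_{G'}(f(x),f(y))=\rho_G(x,y)$. Writing $\rho$ for this common value, I would reduce every one of the six inequalities to a comparison of a point pair function with $\th(\rho/2)$ and $\th(\rho/4)$, using the two identities that already appear in the proof of Lemma~\ref{lem2.12}: the double-angle formula $\th(2u)=2\th u/(1+\th^2 u)$ and the half-angle formula $\th(u/2)=\th u/(1+\sqrt{1-\th^2 u})$ from \cite[2.29(1)]{vu}. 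Accordingly I set $g(t)=2t/(1+t^2)$ and $h(t)=t/(1+\sqrt{1-t^2})$, both increasing on $[0,1]$, so that the right-hand bound $2p(x,y)/(1+p^2(x,y))=g(p(x,y))$ and the left-hand bounds in parts (2) and (3), namely $p(x,y)/(1+\sqrt{1-p^2(x,y)})=h(p(x,y))$, are precisely $g$ and $h$ evaluated at the relevant point pair function.

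The upper bound is uniform across all three parts. By Lemma~\ref{lem2.12} the destination point pair function is at most $\th(\rho/2)$; since $\th(\rho/2)=g(\th(\rho/4))$ with $\th(\rho/4)\le p_{\mathbb{B}^n}(x,y)$ (again Lemma~\ref{lem2.12}) and $g$ increasing, it follows that the destination point pair function is at most $g(p_{\mathbb{B}^n}(x,y))$, which is the stated right-hand side. For the lower bounds I would split by where the image lands. In part (1) the image lies in $\mathbb{H}^n$, where \eqref{mysH} gives the exact identity $p_{\mathbb{H}^n}(f(x),f(y))=\th(\rho/2)$, so the lower bound $p_{\mathbb{B}^n}(x,y)\le\th(\rho/2)$ is immediate from Lemma~\ref{lem2.12}. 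In parts (2) and (3) the image lies in $\mathbb{B}^n$, and the lower bound must come from $p_{\mathbb{B}^n}(f(x),f(y))\ge\th(\rho/4)$; writing $\th(\rho/4)=h(\th(\rho/2))$, using $p(x,y)\le\th(\rho/2)$ from Lemma~\ref{lem2.12}, and invoking monotonicity of $h$ gives $\th(\rho/4)\ge h(p(x,y))$, the desired left-hand side. (In part (3) the source points satisfy $x,y\in\mathbb{H}^n$, so by \eqref{mysH} the source point pair function is $p_{\mathbb{H}^n}(x,y)=\th(\rho/2)$ and the bounds should be read with $p_{\mathbb{H}^n}(x,y)$ in place of $p_{\mathbb{B}^n}(x,y)$ throughout.)

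There is no genuinely hard step; the argument is bookkeeping once the two auxiliary functions are isolated. The only point needing care is the monotonicity of $h$ on $[0,1]$, which I would establish by the substitution $t=\sin\phi$, turning $h(t)$ into $\tan(\phi/2)$ and making monotonicity transparent. The remaining subtlety is purely organizational: in each case one must track whether Lemma~\ref{lem2.12} is being applied to the source pair $x,y$ or to the image pair $f(x),f(y)$, and whether the comparison reference point is $\th(\rho/2)$ (forcing the use of $g$ from above) or $\th(\rho/4)$ (forcing the use of $h$ from below).
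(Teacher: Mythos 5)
Your proof is correct and follows essentially the same route as the paper's: Möbius invariance of $\rho$, the chain $\th\frac{\rho}{4}\le s\le p\le\th\frac{\rho}{2}$ from Lemma \ref{lem2.12} together with the identity \eqref{mysH}, and the double/half-angle formulas for $\th$ from \cite[2.29(1)]{vu} — which the paper applies case by case in each of the six inequality chains, where you package them once as the monotone functions $g$ and $h$. Your parenthetical remark on part (3) — that the inequalities must be read with $p_{\mathbb{B}^n}(f(x),f(y))$ as the middle quantity and $p_{\mathbb{H}^n}(x,y)$ in the bounds — matches exactly what the paper's own proof of (3) establishes, so you have also correctly identified the mislabeled subscripts in the printed statement.
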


\begin{proof}
(1) For the second inequality by \eqref{mysH}, Theorem \ref{smobius}, and Lemma \ref{lem2.12} we have for all $x, y\in\mathbb{B}^n\,,$
\[
p_{\mathbb{H}^n}(f(x),f(y))=s_{\mathbb{H}^n}(f(x),f(y))\leq  \frac{2s_{\mathbb{B}^n}(x,y)}{1+s^2_{\mathbb{B}^n}(x,y)}
\leq \frac{2p_{\mathbb{B}^n}(x,y)}{1+p^2_{\mathbb{B}^n}(x,y)}\,.
\]
For the first inequality we have by Lemma \ref{lem2.12},
\[
p_{\mathbb{H}^n}(f(x),f(y))= \th \left( \frac{\rho_{\mathbb{H}^n}(f(x),f(y))}{2}\right)=\th\frac{\rho_{\mathbb{B}^n}(x,y)}{2} \ge p_{\mathbb{B}^n}(x,y)\,.
\]
\medskip

(2) By Lemma \ref{lem2.12} and \cite[(2.20)]{vu}
\begin{eqnarray*}
p_{\mathbb{B}^n}(f(x),f(y)) &\leq & \th\frac{\rho_{\mathbb{B}^n}(f(x),f(y))}{2}\\
&=& \frac{2\th\frac{\rho_{\mathbb{B}^n}(x,y)}{4}}{1+\th ^2\frac{\rho_{\mathbb{B}^n}(x,y)}{4}}\\
&\leq &\frac{2p_{\mathbb{B}^n}(x,y)}{1+p^2_{\mathbb{B}^n}(x,y)}\,.
\end{eqnarray*}

For the first inequality we have again by Lemma \ref{lem2.12},
\begin{eqnarray*}
p_{\mathbb{B}^n}(f(x),f(y)) &\geq &\th\frac{\rho_{\mathbb{B}^n}(f(x),f(y))}{4}\\
&= &\th\frac{\rho_{\mathbb{B}^n}(x,y)}{4}\\
&\geq & \frac{p_{\mathbb{B}^n}(x,y)}{1+\sqrt{1-p^2_{\mathbb{B}^n}(x,y)}}\,.
\end{eqnarray*}

(3) By Lemma \ref{lem2.12},
\begin{eqnarray*}
p_{\mathbb{B}^n}(f(x),f(y)) &\leq & \th\frac{\rho_{\mathbb{B}^n}(f(x),f(y))}{2}\\
&=& \frac{2\th\frac{\rho_{\mathbb{H}^n}(x,y)}{4}}{1+\th ^2\frac{\rho_{\mathbb{H}^n}(x,y)}{4}}\\
&\leq &\frac{2p_{\mathbb{H}^n}(x,y)}{1+p^2_{\mathbb{H}^n}(x,y)}\,.
\end{eqnarray*}

For the first inequality we have again by Lemma \ref{lem2.12},
\begin{eqnarray*}
p_{\mathbb{B}^n}(f(x),f(y)) &\geq &\th\frac{\rho_{\mathbb{B}^n}(f(x),f(y))}{4}\\
&= &\th\frac{\rho_{\mathbb{H}^n}(x,y)}{4}\\
&\geq & \frac{p_{\mathbb{H}^n}(x,y)}{1+\sqrt{1-p^2_{\mathbb{H}^n}(x,y)}}\,.
\end{eqnarray*}
\end{proof}

\begin{thm}\label{1.2i}

Let $f:\mathbb{B}^n\rightarrow G\,,$ $G\in \{\mathbb{B}^n,\, \mathbb{H}^n \}\,,$ be a $K-$quasiregular mapping.
Then for $x,y\in \mathbb{B}^n$ we have
\[
s_{G}(f(x),f(y))\leq \lambda_n^{1-\alpha} \left(\frac{2s_{\mathbb{B}^n}(x,y)}{1+s^2_{\mathbb{B}^n}(x,y)}\right)^{\alpha},~ \alpha=K^{1/(1-n)}\,,
\]
where $\lambda_n \in [4,2 e^{n-1}), \lambda_2=4,$ is the Gr\"otzsch ring constant depending only on $n$ (\cite[Lemma 7.22]{vu}).
\end{thm}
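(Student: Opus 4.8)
The plan is to reduce the triangular ratio metric to the hyperbolic metric, invoke the Schwarz lemma for quasiregular mappings, and then translate back to the $s$-metric. The whole argument hinges on the identity $2t/(1+t^2)=\th(2\arth t)$, the same double-angle formula that produced the estimate in Theorem \ref{smobius} for the conformal case. Indeed, setting $K=1$ (so that $\alpha=1$ and $\lambda_n^{0}=1$) should recover Theorem \ref{smobius} exactly, which is a useful consistency check for the bound.

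First I would replace the left-hand side by a hyperbolic quantity. For $G=\mathbb{H}^n$, equation \eqref{mysH} gives the equality $s_{\mathbb{H}^n}(f(x),f(y))=\th\tfrac{\rho_{\mathbb{H}^n}(f(x),f(y))}{2}$, whereas for $G=\mathbb{B}^n$ the third inequality of Lemma \ref{lem2.12} gives $s_{\mathbb{B}^n}(f(x),f(y))\le\th\tfrac{\rho_{\mathbb{B}^n}(f(x),f(y))}{2}$; in both cases
\[
s_G(f(x),f(y))\le \th\frac{\rho_G(f(x),f(y))}{2}.
\]
Next I would apply the Schwarz lemma for $K$-quasiregular mappings \cite{vu}, which for $f$ mapping $\mathbb{B}^n$ into a ball or half-space yields
\[
\th\frac{\rho_G(f(x),f(y))}{2}\le \lambda_n^{1-\alpha}\left(\th\frac{\rho_{\mathbb{B}^n}(x,y)}{2}\right)^{\alpha},\quad \alpha=K^{1/(1-n)}.
\]
Because $\rho_{\mathbb{H}^n}$ is Möbius invariant, the case $G=\mathbb{H}^n$ reduces to the case of target $\mathbb{B}^n$ by composing $f$ with a Möbius isometry $\mathbb{H}^n\to\mathbb{B}^n$, which preserves the hyperbolic distances on both sides.

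Finally I would convert the right-hand side from $\rho_{\mathbb{B}^n}$ back to $s_{\mathbb{B}^n}$. The first (lower) bound in Lemma \ref{lem2.12} reads $\th\tfrac{\rho_{\mathbb{B}^n}(x,y)}{4}\le s_{\mathbb{B}^n}(x,y)$, and the double-angle identity gives $\th\tfrac{\rho_{\mathbb{B}^n}(x,y)}{2}=2\th(\rho_{\mathbb{B}^n}(x,y)/4)/(1+\th^2(\rho_{\mathbb{B}^n}(x,y)/4))$. Since $g(t)=2t/(1+t^2)$ satisfies $g'(t)=2(1-t^2)/(1+t^2)^2\ge 0$ on $[0,1]$, it is increasing there, whence
\[
\th\frac{\rho_{\mathbb{B}^n}(x,y)}{2}=g\!\left(\th\frac{\rho_{\mathbb{B}^n}(x,y)}{4}\right)\le g\bigl(s_{\mathbb{B}^n}(x,y)\bigr)=\frac{2s_{\mathbb{B}^n}(x,y)}{1+s^2_{\mathbb{B}^n}(x,y)}.
\]
Chaining the three displays and raising the last bound to the power $\alpha\in(0,1]$ yields the claim. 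The main obstacle is invoking the correct sharp form of the quasiregular Schwarz lemma with precisely the multiplicative constant $\lambda_n^{1-\alpha}$ and the exponent $\alpha=K^{1/(1-n)}$; once that sharp hyperbolic estimate is in hand, the remaining steps are only the monotonicity of $g$ and the two-sided comparison of $s_{\mathbb{B}^n}$ with $\th(\rho_{\mathbb{B}^n}/4)$ and $\th(\rho_{\mathbb{B}^n}/2)$ already furnished by Lemma \ref{lem2.12}.
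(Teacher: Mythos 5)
Your proposal is correct and follows essentially the same route as the paper's own proof: bound $s_G(f(x),f(y))$ by $\th\bigl(\rho_G(f(x),f(y))/2\bigr)$ via \eqref{mysH} and Lemma \ref{lem2.12}, apply the quasiregular Schwarz lemma with constant $\lambda_n^{1-\alpha}$ and exponent $\alpha=K^{1/(1-n)}$ (the paper cites \cite[Theorem 5.4]{chkv} for this), and convert back to $s_{\mathbb{B}^n}$ through the double-angle identity together with $\th(\rho_{\mathbb{B}^n}/4)\le s_{\mathbb{B}^n}$ and the monotonicity of $t\mapsto 2t/(1+t^2)$. Your explicit verification of that monotonicity and the $K=1$ consistency check are minor additions the paper leaves implicit, but the argument is the same.
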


\begin{proof}
For $G=\mathbb{B}^n\,,$ by Lemma \ref{lem2.12} and  \cite[Theorem 5.4]{chkv}, we have for all $x, y\in\mathbb{B}^n\,,$
\begin{eqnarray*}
s_{\mathbb{B}^n}(f(x),f(y))&\leq& \th \left( \frac{\rho_{\mathbb{B}^n}(f(x),f(y))}{2} \right) \\
&\leq& \lambda_n^{1-\alpha}\left(\th  \frac{\rho_{\mathbb{B}^n}(x,y)}{2} \right)^{\alpha} \\
&=& \lambda_n^{1-\alpha}\left(\frac{2\th\frac{\rho_{\mathbb{B}^n}(x,y)}{4}}{1+\th ^2\frac{\rho_{\mathbb{B}^n}(x,y)}{4}}\right)^{\alpha}\\
&\leq & \lambda_n^{1-\alpha} \left(\frac{2s_{\mathbb{B}^n}(x,y)}{1+s^2_{\mathbb{B}^n}(x,y)}\right)^{\alpha}\,.
\end{eqnarray*}
Similarly, for $G=\mathbb{H}^n$ by \eqref{mysH}, Lemma \ref{lem2.12}, and  \cite[5.4]{chkv}, we have for all $x, y\in\mathbb{B}^n\,,$
\begin{eqnarray*}
s_{\mathbb{H}^n}(f(x),f(y))&=& \th \left( \frac{\rho_{\mathbb{H}^n}(f(x),f(y))}{2} \right) \\
&\leq& \lambda_n^{1-\alpha}\left(\th  \frac{\rho_{\mathbb{B}^n}(x,y)}{2} \right)^{\alpha} \\
&=& \lambda_n^{1-\alpha}\left(\frac{2\th\frac{\rho_{\mathbb{B}^n}(x,y)}{4}}{1+\th ^2\frac{\rho_{\mathbb{B}^n}(x,y)}{4}}\right)^{\alpha}\\
&\leq & \lambda_n^{1-\alpha} \left(\frac{2s_{\mathbb{B}^n}(x,y)}{1+s^2_{\mathbb{B}^n}(x,y)}\right)^{\alpha}\,.
\end{eqnarray*}
\end{proof}

\bigskip

{\bf Acknowledgements.}
The first author is supported by Department of Mathematics and Statistics, University of Turku, and the third author by the Academy of Finland project 268009. The authors are indebted to the referee for several valuable corrections.




\begin{thebibliography}{VVVV}


 \bibitem[AVV]{avv}
{\sc  G. D. Anderson, M. K. Vamanamurthy, and M. Vuorinen}: Conformal
invariants, inequalities, and quasiconformal maps. J. Wiley, 1997.


\bibitem[AVZ]{avz}
{\sc  G. D. Anderson,  M. Vuorinen, and X. Zhang}: Topics in special functions III.
In Analytic Number Theory, Approximation Theory, and Special Functions,
ed. by G.\,V.~Milovanovi\'c, M.\,Th.~Rassias, 297--345, Springer, New York, 2014.


\bibitem[B]{b}
 {\sc A. F. Beardon}: The geometry of discrete groups. Graduate Texts in
Math., Vol. 91, Springer-Verlag, New York, 1983.


\bibitem[CHKV]{chkv}{\sc J. Chen, P. Hariri, R. Kl\'en, and M. Vuorinen}:
Lipschitz conditions, triangular ratio metric, and quasiconformal maps.
{Ann. Acad. Sci. Fenn. 40 (2015), 683--709,} doi:10.5186/aasfm.2015.4039,
{arXiv:1403.6582} [math.CA].


\bibitem[F1]{f1}{\sc J.
 Lelong-Ferrand}: Invariants conformes globaux sur les vari\'et\'es riemanniennes.
 (French) J. Differential Geometry 8 (1973), 487--510.


\bibitem[F2]{f2}{\sc J.
Ferrand}: Conformal capacities and extremal metrics. Pacific J. Math. 180 (1997), no. 1, 41--49.


\bibitem[FMV]{fmv}
 {\sc   J. Ferrand, G. Martin, and M. Vuorinen}:  Lipschitz conditions in conformally invariant metrics.
 J. Analyse Math. 56 (1991), 187-210.

\bibitem[G]{g}{\sc  F. W. Gehring}: Quasiconformal mappings in Euclidean spaces. In:
Handbook of complex analysis: geometric function theory. Vol. 2, 1--29, Elsevier, Amsterdam, 2005.


\bibitem[GH]{gh}{\sc F. W. Gehring and K. Hag}: The Ubiquitous Quasidisk, Mathematical Surveys
and Monographs 184, Amer. Math. Soc., Providence, RI, 2012.


\bibitem[GO]{go}{\sc  F. W. Gehring and B. G. Osgood}: Uniform domains and the quasihyperbolic
metric. J. Analyse Math. 36 (1979), 50--74.


 \bibitem[GP]{gp}{\sc  F. W. Gehring and  B. P. Palka}: Quasiconformally homogeneous domains.
 J. Analyse Math. 30 (1976), 172--199.




\bibitem[HVW]{hvw}{\sc P. Hariri, M. Vuorinen and G. Wang}: Some remarks on the visual angle metric.
Comput. Methods and Funct. Theory, (2016) 16: 187--201, doi:10.1007/s40315-015-0137-8, {arXiv: 1410.5943} [math.MG].


 \bibitem[H]{hasto} {\sc P. H\"ast\"o}: Isometries of the quasihyperbolic metric. Pacific J. Math. 230 (2007), 315--326.


\bibitem[KLV]{klv} {\sc A. K\"aenm\"aki, J. Lehrb\"ack, and M. Vuorinen}:
Dimensions, Whitney covers, and tubular neighborhoods. Indiana Univ. Math. J. 62 (2013), no. 6, 1861--1889.


\bibitem[KLVW]{klvw}
{\sc  R. Kl\'en,  H. Lind\'en, M. Vuorinen, and G. Wang}: {The
visual angle metric and M\"obius transformations}.  Comput. Methods
Funct. Theory 14 (2014), 577--608, {arxiv.org/abs/1208.2871math.MG}, DOI 10.1007/s40315-014-0075-x.


\bibitem[KVZ]{kvz}
 {\sc R. Kl\'en, M. Vuorinen, and X. Zhang}:
Quasihyperbolic metric and M\"obius transformations.
{Proc. Amer. Math. Soc. 142 (2014), 311--322,} {arXiv: 1108.2967 math.CV}


\bibitem[M]{m}
{\sc G. J. Martin}: Quasiconformal and bi-Lipschitz homeomorphisms, uniform domains and the
quasihyperbolic metric. Trans. Amer. Math. Soc. 292 (1985), no. 1, 169--191.


\bibitem[MV]{mv} {\sc   O. Martio and M. Vuorinen}: Whitney cubes, p-capacity, and  Minkowski content.
Exposition. Math. 5 (1987), 17--40.


\bibitem[S]{s}{\sc R.E. Shafer}: Problem E 1867. Amer. Math. Monthly 73 (1966), 309--310.


\bibitem[V1]{v1} {\sc J. V\"ais\"al\"a}:
Lectures on n-dimensional quasiconformal mappings.
Lecture Notes in Math. 229, Springer-Verlag, Berlin, 1971.


 \bibitem[V2]{v2} {\sc J. V\"ais\"al\"a}: The free quasiworld. Freely quasiconformal and related maps in Banach spaces.
 Quasiconformal geometry and dynamics (Lublin, 1996), 55--118, Banach Center Publ., 48, Polish Acad. Sci., Warsaw, 1999.


\bibitem[VVW]{vvw} {\sc  J. V\"ais\"al\"a, M. Vuorinen and H. Wallin}:  Thick sets and quasisymmetric maps.
Nagoya Math. J. 135 (1994), 121--148.



\bibitem[Vu1]{vu0} {\sc  M. Vuorinen}:  Conformal invariants and quasiregular mappings. J. Anal. Math. 45 (1985), 69-115.


\bibitem[Vu2]{vu}
{\sc  M. Vuorinen}: Conformal geometry and quasiregular mappings. Lecture
Notes in Math. 1319, Springer-Verlag, Berlin, 1988.


\bibitem[VW]{vw}
 {\sc  M. Vuorinen and G. Wang}:  The visual angle metric and quasiregular maps. Proc. Amer. Math. Soc. (to appear) {arxiv:1505.00607}  [math.CA] 14pp.



\end{thebibliography}
\end{document}